\definecolor{darkred}{rgb}{0.5,0,0}
\definecolor{darkgreen}{rgb}{0,0.5,0}
\definecolor{darkblue}{rgb}{0,0,0.5}
\numberwithin{equation}{section}
\newtheorem{thm}{Theorem}[section]
\newtheorem{cor}[thm]{Corollary}
\newtheorem{conj}[thm]{Conjecture}
\newtheorem{prop}[thm]{Proposition}
\newtheorem{lemma}[thm]{Lemma}
\theoremstyle{definition}
\newtheorem{defn}[thm]{Definition}
\theoremstyle{remark}
\newtheorem{rem}[thm]{Remark}
\newtheorem{hyp}[thm]{Hypothesis}
\newtheorem{notation}[thm]{Notation}
\newtheorem{convention}[thm]{Convention}
\newcounter{notes}
{\end{list}}
\newcommand\qu{/\kern-.7ex/} % Categorical quotients
\renewcommand{\setminus}{\smallsetminus}
\newcommand{\Gammait}{{\mathit{\Gamma}}}
\newcommand{\beq}{\begin{equation}}
\newcommand{\eeq}{\end{equation}}
\newcommand{\beqn}{\begin{equation*}}
\newcommand{\eeqn}{\end{equation*}}
\newcommand{\ov}{\overline}
\newcommand{\mb}{\mathbb}
\newcommand{\mc}{\mathcal}
\newcommand{\mf}{\mathfrak}
\newcommand{\ev}{{\rm ev}}
\title{Counting Pointlike Instantons Virtually without Gluing}
\begin{document}

\author[Tian]{Gang Tian}
\address{
BICMR and SMS\\
Beijing University\\
Beijing, P.R.China, 100871}
\email{gtian@math.pku.edu.cn}

\author[Xu]{Guangbo Xu}
\address{
Department of Mathematics \\
Texas A{\&}M University\\
College Station, TX 77843 USA
}
\email{guangboxu@math.tamu.edu}

\date{\today}

\maketitle

\begin{abstract}
We use the first author's idea on ``constructing virtual fundamental cycles without gluing'' to define the quantum correction term between the gauged linear sigma model and the nonlinear sigma model. 
\end{abstract}

\setcounter{tocdepth}{1}
\tableofcontents

\section{Introduction}

This paper aims at constructing an invariant appeared in Witten's gauged linear sigma model (GLSM) using a novel idea in the theory of virtual fundamental cycle (VFC) which was originally due to the first author (see \cite{Tian_no_glue} and the thesis of Shaofeng Wang \cite{Shaofeng_thesis}). The invariant, denoted by ${\mf c}$, which is formally a weighted count of pointlike instantons, plays an important role in GLSM and mirror symmetry. The ``no-gluing'' approach of the VFC construction greatly simplified the technicality involved in the definition.

\subsection{Counting pointlike instantons in gauged linear sigma model}

Witten's GLSM \cite{Witten_LGCY} is a powerful framework which connects two basic superconformal field theories: the nonlinear sigma model and the orbifold Landau--Ginzburg model. It has been regarded as an ideal setting for proving the Landau--Ginzburg/Calabi--Yau correspondence \cite{Martinec}\cite{GVW}\cite{VafaW}. It also plays a crucial role in Hori--Vafa's beautiful ``proof'' of mirror symmetry \cite{Hori_Vafa}. In 2012 the authors initiated the program of establishing the mathematical foundation of GLSM using methods from symplectic geometry. After \cite{Tian_Xu, Tian_Xu_2, Tian_Xu_3, Tian_Xu_4, Tian_Xu_geometric}, we have defined a cohomological field theory for GLSM spaces satisfying the ``geometric phase'' condition. Meanwhile algebraic geometers have developed parallel theories, see \cite{FJR_GLSM}\cite{CLLL_2019}\cite{CFGKS} etc.

We give a brief review of our prior results. Let $(V, G, W, \mu)$ be a GLSM space, which roughly means\footnote{For the precise definition, see Section \ref{section2}.}: $V$ is a K\"ahler manifold, $G$ is a complex reductive Lie group (whose maximal compact subgroup is $K$) acting on $V$, $W: V \to {\mb C}$ is a $G$-invariant holomorphic function, and $\mu: V \to {\mf k}^*$ is a moment map for the restricted $K$-action. The {\bf classical vacuum} is 
\beqn
X = ({\rm Crit} W \cap \mu^{-1}(0))/K.
\eeqn
Under the {\bf geometric phase} condition and some other technical assumptions, the classical vacuum is a compact K\"ahler orbifold. For example, all hypersurfaces in projective spaces can be realized as the classical vacua of certain GLSM spaces satisfying the geometric phase condition. On the other hand, Landau--Ginzburg phases or hybrid phases are not under consideration here. 

In \cite{Tian_Xu_geometric} the authors constructed the GLSM correlation function under the geometric phase condition. The correlation functions are multilinear maps
\beqn
\langle \cdots \rangle_{g,n}^{\rm GLSM}: H^*(X; \Lambda)^{\otimes n} \to H_*( \ov{\mc M}_{g,n}; \Lambda).
\eeqn
Here $\Lambda$ is a certain Novikov ring (with rational coefficients), $g$ is the genus, $n$ is the number of marked points, such that $2g + n \geq 3$. The construction uses a detailed version of the VFC construction introduced in \cite{Li_Tian} (with gluing) where transversality is treated in the topological ($C^0$) sense. Our GLSM correlation functions satisfy axioms of cohomological field theories (CohFT) \cite{Manin_2}; they can be regarded as the symplectic counterpart of the invariants constructed via algebraic methods (see \cite{CCK_quasimap}\cite{FJR_GLSM}\cite{CFGKS}\cite{Favero_Kim_2020}). 

The relation between GLSM invariants and Gromov--Witten invariants of the classical vacuum is an important question. In the special case when $W = 0$, Dietmar Salamon conjectured that such a relation can be interpreted as a ``quantum deformation'' of the classical Kirwan map. Using the adiabatic limit method Gaio and Salamon \cite{Gaio_Salamon_2005} verified a special case of this relation. The quantum Kirwan map conjecture has been pursued by Ziltener \cite{Ziltener_book} and solved in the algebraic case by Woodward \cite{Woodward_15}. In the more interesting case when $W \neq 0$, we proposed the following conjecture in \cite{Tian_Xu_2017, Tian_Xu_geometric}.

\begin{conj}\label{conj1}
Let $(V, G, W, \mu)$ be a GLSM space satisfying the geometric phase condition with classical vacuum $X$ being a compact K\"ahler manifold. Then there is a class ${\mf c} \in H^*(X; \Lambda^+)$ such that for all $(g, n)$ with $2g + n \geq 3$, one has 
\beqn
\langle \alpha_1, \ldots, \alpha_n \rangle_{g, n}^{\rm GLSM} = \sum_{k=0}^\infty \frac{1}{k!} \langle \alpha_1, \ldots, \alpha_n, \underbrace{{\mf c}, \ldots, {\mf c}}_{k} \rangle_{g, n+k}^{\rm GW}.
\eeqn
\end{conj}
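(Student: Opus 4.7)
The plan is to prove the conjecture in two stages: first to define $\mf c$ itself as a virtual count of pointlike instantons, and then to match the GLSM correlator with the $\mf c$-twisted GW correlator by a stratified analysis of the compactified GLSM moduli space, assembled via Tian's no-gluing VFC formalism.

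First I would define $\mf c$. A pointlike instanton is a finite-energy symplectic vortex $(A,u)$ on the trivial $K$-bundle over $\mb C$ satisfying the GLSM equations with superpotential $W$, whose asymptotic value at infinity lies in the classical vacuum $X$. After quotienting by the residual symmetries of $\mb C$ (dilations and translations), the no-gluing VFC machinery of \cite{Tian_no_glue, Shaofeng_thesis} produces a virtual class $[\mc M^{\rm pt}]^{\rm vir}$ on this moduli. I then set
\[
\mf c := ({\rm ev}_\infty)_* [\mc M^{\rm pt}]^{\rm vir} \in H^*(X; \Lambda^+),
\]
where ${\rm ev}_\infty : \mc M^{\rm pt} \to X$ is the asymptotic evaluation. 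Strict positivity of the energy of any nontrivial pointlike instanton places $\mf c$ in the positive part $\Lambda^+$ of the Novikov ring, which in turn guarantees convergence of the $k$-sum in the conjecture.

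Next I would stratify the compactified GLSM moduli space $\ov{\mc M}{}_{g,n}^{\rm GLSM}$ by the number $k$ of pointlike-instanton bubbles. The key compactness statement I need is that a limit of a sequence of GLSM solutions decomposes, after a suitable rescaling near finitely many bubble points, as a tuple $(f, z_1, \ldots, z_k, (A_i, u_i)_{i=1}^k)$ where $f \in \ov{\mc M}{}_{g, n+k}^{\rm GW}(X)$ is a stable map, the unordered $z_i$ are distinct additional marked points on the domain, each $(A_i, u_i) \in \mc M^{\rm pt}$, and ${\rm ev}_\infty (A_i, u_i) = f(z_i)$. Applying Tian's no-gluing construction to this stratification provides, in a virtual neighborhood of the codimension-$k$ stratum, a product model $\bigl(\ov{\mc M}{}_{g,n+k}^{\rm GW}(X) \times_{X^k} (\mc M^{\rm pt})^k\bigr)/S_k$. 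Pushing the $\mc M^{\rm pt}$ factors forward by ${\rm ev}_\infty$ identifies the stratum's contribution to $\langle \alpha_1, \ldots, \alpha_n\rangle_{g,n}^{\rm GLSM}$ with $\frac{1}{k!}\langle \alpha_1, \ldots, \alpha_n, \mf c, \ldots, \mf c\rangle_{g,n+k}^{\rm GW}$, the factor $1/k!$ reconciling the unordered bubble positions $z_i$ with the ordered GW marked points. Summing over $k$ yields the conjectured identity.

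The hardest step will be the compactness/bubbling analysis itself: I must show that the only boundary phenomenon in $\ov{\mc M}{}_{g,n}^{\rm GLSM}$ beyond the nodal degenerations already present on the GW side is the bubbling off of pointlike instantons at isolated points, with a quantitative identification of the rescaled limit as an element of $\mc M^{\rm pt}$. This generalizes the adiabatic-limit analysis of Gaio--Salamon \cite{Gaio_Salamon_2005} to the $W \neq 0$ setting, in which bubbling can interact subtly with critical points of $W$ and with the Kirwan stratification of the unstable locus. The virtue of the no-gluing approach is that the analytically delicate gluing step is replaced by a topologically defined virtual perturbation that is transverse stratum-wise; nevertheless, one still needs uniform energy and $C^0$ estimates to rule out extraneous boundary components and to certify the product model on each codimension-$k$ stratum.
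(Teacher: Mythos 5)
The statement you are asked to prove is a \emph{conjecture}, and the paper does not prove it. The paper's own contribution (Theorem 1.2 and Sections 3--5) is only your first stage, namely the construction of the class ${\mf c}$; the identity itself is explicitly deferred: ``In our forthcoming paper, we will prove Conjecture \ref{conj1} using the adiabatic limit method.'' On the definition of ${\mf c}$ your picture is essentially the paper's, except for one genuine slip: an affine vortex / pointlike instanton on ${\mb C}$ is translation-invariant but \emph{not} dilation-invariant (the $\mu(u)$ term in the second vortex equation fixes a scale), so the paper quotients $\tilde{\mc M}(V,d)$ by ${\mb C}$ only. Dividing in addition by dilations would lower the expected dimension from $m_d-2$ to $m_d-4$ and place ${\mf c}$ in the wrong cohomological degree, contradicting the Calabi--Yau degree-$2$ statement of Theorem 1.2. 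Also, the paper never forms $({\ev}_\infty)_*[\mc M^{\rm pt}]^{\rm vir}$ directly; it cuts the moduli down by cycles $S_i\subset X$ Poincar\'e dual to a basis of $H^{m_d-2}(X;{\mb Q})$, reduces to index-zero problems, and reads off ${\mf c}$ from the resulting virtual counts.

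For the second stage, your plan of running the adiabatic-limit argument entirely inside the no-gluing framework is exactly what the paper warns cannot work: ``the `VFC without gluing' method cannot save one's effort of codimension one gluing \ldots It is also the case in the forthcoming paper where we need to perform a codimension one gluing to prove Conjecture \ref{conj1}.'' The conjectured identity is a cobordism statement over a one-parameter adiabatic degeneration, and the stratum where pointlike-instanton bubbles attach appears at real codimension one in that cobordism; no-gluing controls only strata of codimension at least two, so it cannot certify that this boundary closes up into the product model $\bigl(\ov{\mc M}{}_{g,n+k}^{\rm GW}(X)\times_{X^k}(\mc M^{\rm pt})^k\bigr)/S_k$ without an actual gluing analysis. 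Your proposal also misstates the geometry: the GLSM moduli space at a fixed coupling is already compact and is not itself stratified by the number of instanton bubbles; the bubbles emerge only as the adiabatic parameter degenerates, which is precisely why this is a codimension-one gluing problem rather than a stratified-transversality one.
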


We remark that, the above formula does not only connect two theories on the A-side, but is also closely related to mirror symmetry for Calabi--Yau manifolds. Indeed, as explained by physicists \cite{Hori_Vafa}, the mirror map between the symplectic moduli and the K\"ahler moduli for certain mirror pairs is essentially equivalent to the class ${\mf c}$. Recently a mathematical verification of this prediction has been given in \cite{CK_2020} using quasimaps. We will discuss the relation between ${\mf c}$ and the mirror map in our setting in future works.

In this paper, we establish the first step towards Conjecture \ref{conj1}. As the quantum Kirwan map is defined by counting a kind of object called ``affine vortices,'' the class ${\mf c}$ is defined by counting a generalization called {\bf pointlike instantons}. 

\begin{thm}
Let $(V, G, W, \mu)$ be a gauged linear sigma model space in geometric phase. Suppose the classical vacuum is a manifold. Then the virtual counts of pointlike instantons define a cohomology class 
\beqn
{\mf c} \in H^*(X; \Lambda_+)
\eeqn
which is independent of various choices. (Here $\Lambda_+$ is the Novikov ring with positive valuation.) In particular, when the classical vacuum is a Calabi--Yau manifold, ${\mf c}$ is homogeneous of degree $2$. 
\end{thm}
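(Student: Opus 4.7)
\bigskip

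The plan is to realize $\mf c$ as the pushforward of a virtual fundamental cycle on a compactified moduli space of pointlike instantons under an evaluation map at infinity. Concretely, a pointlike instanton should be a finite-energy solution $(A, u)$ of a symplectic vortex-type equation on ${\mb C}$ with critical-point condition coming from $W$ and moment map condition coming from $\mu$, asymptotic at $\infty$ to a point of the classical vacuum $X = ({\rm Crit}\,W\cap \mu^{-1}(0))/K$. The moduli space $\mc M(B)$ is stratified by an asymptotic class $B$ in a suitable Novikov cone, and there is a continuous evaluation $\ev_\infty: \ov{\mc M}(B) \to X$. The target class is then
\beqn
{\mf c} = \sum_B (\ev_\infty)_* [\ov{\mc M}(B)]^{\rm vir} \cdot q^B \in H^*(X; \Lambda_+),
\eeqn
with the sum being finite in each energy window by a Gromov-type compactness argument.

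First I would carry out the analytic preliminaries: energy identity and a monotonicity/removable singularity result at $\infty$, uniform $C^0_{\rm loc}$ bounds using the $W$- and $\mu$-terms, and a bubbling/compactness theorem describing the strata of $\ov{\mc M}(B)$. Here the candidate bubbles are holomorphic spheres in $V/\kern-.3ex/ G$, affine vortices in the sense of \cite{Ziltener_book}, and, because of the superpotential, additional Morse-like pieces of ${\rm Re}\,W$ along which energy concentrates. One must verify that the resulting compactified moduli has a topological structure as a stratified space with appropriate Kuranishi-type local models; the tangent-obstruction theory at each stratum is given by the linearization of the pointlike instanton equation, deformed by the node/gluing parameters but, crucially, without performing any gluing construction.

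The next and central step is the no-gluing VFC construction following \cite{Tian_no_glue} and \cite{Shaofeng_thesis}. Rather than building smooth Kuranishi charts by gluing approximate solutions, one works directly with the topological Banach manifold of \emph{stable maps with varying domain}, uses the linearized Fredholm theory stratum by stratum, and produces a finite-dimensional, $C^0$-topological obstruction bundle whose zero locus, after a generic multi-valued perturbation $\sigma$, is a compact pseudocycle mapping to $X$. The key technical points I would need are: (i) continuity of the obstruction bundle across strata under the topological (not smooth) gluing topology, (ii) an orientation compatible with the stratification, and (iii) a relative version over a compact parameter space (for cobordism). Once these are in place, $(\ev_\infty)_*$ of the perturbed zero set defines a cycle in $X$, and the class of this cycle in $H^*(X;\Lambda_+)$ is the proposed $\mf c$.

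Independence of choices comes from a standard one-parameter cobordism: two systems of data are joined by a path and the no-gluing VFC of the family gives a relative cycle interpolating the two, again constructed without gluing analysis. Finally, the Calabi--Yau degree statement follows from the index formula for the pointlike instanton operator; in the CY case the virtual dimension of $\ov{\mc M}(B)$ is independent of $B$ and equals $\dim_{\mb R} X - 2$, so $(\ev_\infty)_*$ lands in $H^2(X;\Lambda_+)$. I expect the main obstacle to be step two combined with the continuity of the obstruction bundle across strata: the analytic description of the compactification, including ruling out energy escape to the puncture and getting good decay estimates uniformly in a family, is where all the hard estimates are concentrated, and these estimates are precisely what makes the no-gluing patching of Kuranishi data continuous in the $C^0$ topology.
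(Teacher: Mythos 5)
Your overall skeleton matches the paper: push forward a virtual fundamental cycle of $\ov{\mc M}(V;d)$ under $\ev_\infty$, use the no-gluing VFC scheme, prove independence by a one-parameter cobordism, and read off the degree from the index formula $m_d = \dim_{\mb R}X + 2\langle c_1^K(TV),d\rangle$, which in the Calabi--Yau case gives $\dim_{\mb R}X - 2$ and hence a class in $H^2$. However, there are two substantive deviations worth flagging.

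First, your description of the compactification is wrong in an important way. You list ``Morse-like pieces of ${\rm Re}\,W$ along which energy concentrates'' among the candidate bubbles, and you present controlling these as one of the main obstacles. The paper proves (Proposition \ref{prop28}) that \emph{every} bounded solution of the gauged Witten equation on ${\mb C}$ automatically satisfies $\nabla W(u)\equiv 0$, so pointlike instantons \emph{are} affine vortices landing in $dW^{-1}(0)$. Consequently the compactification is exactly Ziltener's stable affine vortex compactification (with sphere bubbles in $X$), and there are no $W$-Morse pieces, no gradient trajectories, no energy escape into the potential. Missing this identity would lead you to analyze strata that are empty, and, more importantly, you would not be able to invoke the existing bubbling, Fredholm, and local-model results for affine vortices \cite{Ziltener_book, Venugopalan_Xu}, which the paper's construction rests on.

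Second, the paper does not build a full pseudocycle of expected dimension $m_d - 2$ mapping to $X$. It reduces to index-zero counts: choose smooth cycles $S_1,\dots,S_k\subset X$ whose Poincar\'e duals form a basis of $H^{m_d-2}(X;{\mb Q})$, cut $\ov{\mc M}(V;d)$ by the constraint $\ev_\infty\in S_i$ to get a zero-dimensional problem $\ov{\mc M}_{S_i}(V;d)$, and define the coefficients of $\mf c$ as these rational virtual counts. This is not cosmetic: in the purely topological ($C^0$) Kuranishi framework, where strata are topological (not smooth) manifolds and perturbations are $C^0$ multisections obtained from Kirby--Siebenmann/Quinn transversality, an honest pseudocycle of positive dimension is awkward to define and to intersect. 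Counting signed points in the top stratum of a zero-dimensional perturbed zero set, using the codimension-two gap between strata to show lower strata contribute nothing, sidesteps this entirely. You would need to either adopt this reduction or supply a careful definition of $C^0$-pseudocycles compatible with the stratified transversality you are constructing.

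Beyond these, your assertion that the no-gluing method ``works directly with the topological Banach manifold of stable maps with varying domain'' is not how the paper proceeds; it builds finite-dimensional stratified Kuranishi charts via thickening data (resolution data for the domain, finite-dimensional obstruction spaces $\iota_\alpha({\bm E}_\alpha)$, normalization hypersurfaces ${\bm H}_\alpha$), proves each stratum of each chart is a topological manifold of the expected dimension (Lemma \ref{lemma512}), and then uses the stratified topological transversality extension (Proposition \ref{prop47}, Lemma \ref{lemma410}) to produce perturbations without gluing. Your summary of the mechanism is closer in spirit to a global Banach-space argument than to what is actually done.
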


The definition of the correction term is based on the virtual fundamental cycle construction. In our forthcoming paper, we will prove Conjecture \ref{conj1} using the adiabatic limit method which generalizes the one used in \cite{Gaio_Salamon_2005}.

\subsection{VFC without gluing}
 
We introduce the first author's idea which says that gluing is unnecessary for certain VFC constructions. In symplectic geometry, to define Gromov--Witten invariants or Floer homology of a general symplectic manifold, one needs to regularize the moduli space of stable pseudoholomorphic maps. As one cannot achieve transversality by perturbing global geometric data (such as the almost complex structure),  mathematicians invented the virtual technique, which allows one to perturb the moduli space locally but abstractly. Basically, after finite-dimensional reduction (as did in \cite{Li_Tian} and \cite{Fukaya_Ono}), a moduli space $\ov{\mc M}$ locally is characterized by a Kuranishi chart
\beqn
C = (U, E, S, \psi, F)
\eeqn
where $U$ is a smooth or topological orbifold, $E \to U$ is an orbifold vector bundle, $S : U \to E$ is a section, $\psi: S^{-1}(0) \to \ov{\mc M}$ is a continuous map into the moduli space which is a homeomorphism onto an open subset $F \subset \ov{\mc M}$. In general the section $S$ (the Kuranishi map) is not transverse, but one can perturb it so that the singular locus $S^{-1}(0)$ is replaced by a regular (possibly branched) one. By covering the moduli space with finitely many Kuranishi charts and construct compatible perturbations inductively, the perturbed vanishing loci patched together to give a compact cycle, whose cobordism class is independent of the perturbation. 

One can see that the orbifold structure of the Kuranishi chart is essential for the notion of transversality and hence the whole VFC construction. The standard construction of such orbifold structure near a stable map with a nodal domain depends on the ``gluing construction.'' Namely, one has to show that the gluing parameters, which parametrize the resolutions of the nodal points, are part of the coordinates on the chart. This construction is often highly involved. 

The main observation of the first author is that to define a good perturbation, one only needs a weaker ``stratified'' structure on charts. Notice that the moduli space and very local chart are naturally stratified by the combinatorial types of curves. If each stratum of a chart is an orbifold, then a $C^0$ perturbation can be constructed inductively so that it is transverse over each stratum. As dimensions differ at least by two between strata, such a perturbation gives a pseudocycle in the $C^0$ category. On the other hand, this approach works more conveniently with the VFC theory in the $C^0$-category, which was emphasized in Li--Tian's original work \cite{Li_Tian} and was carried out in detail in the recent work \cite{Tian_Xu_geometric} in the setting of GLSM.

%The virtual technique arose in the study of Gromov--Witten and Floer theories in the mid 1990s. To regularize moduli spaces of holomorphic curves in a general symplectic manifold, , the first approach was to modify the involved partial differential equation. For example, use a generic almost complex structure $J$ on the target manifold or adding an inhomogeneous term $\nu$. This method cannot be extended to the general situation when multiple covers of holomorphic spheres with negative Chern numbers are present. 

%The fundamental step of the virtual technique is to construct local Kuranishi families. If ${\mc M}$ is our moduli space and $p\in {\mc M}$ is a point, then a local Kuranishi family is a tuple $C = (U, E, S, \psi, F)$ where $U$ is a certain topological space, $E \to U$ is some kind of ``vector bundle,'' $S: U \to E$ is the ``Kuranishi map,'' and $\psi: S^{-1}(0) \to {\mc M}$ is a homeomorphism onto the open neighborhood $F \subset {\mc M}$ of $p$ (called the footprint). In the existing approaches, the base $U$ of the chart $C$ is always a manifold or orbifold, either in the smooth category or the topological category. Then over each chart $C$ the notion of ``transversality'' makes sense. The construction of the chart usually involves some sophisticated gluing construction. When one needs smooth structures, one also needs to prove more estimates about the dependence on the gluing parameters. 

One can also see that the ``VFC without gluing'' approach is particularly convenient in the application considered in this paper. First, as singular configurations in the compactified moduli space consist of objects of different types (holomorphic curves and pointlike instantons), the gluing construction would be much more complicated than gluing holomorphic curves. Second, the space of gluing parameters is not a vector space but an affine variety. The no-gluing approach saves us the effort of describing such more general charts. 

We remark that the ``VFC without gluing'' method cannot save one's effort of codimension one gluing. For example, to prove results such as $d^2 = 0$ in Floer theory, one still needs to gluing broken trajectories. It is also the case in the forthcoming paper where we need to perform a codimension one gluing to prove Conjecture \ref{conj1}.

\subsection{Organization}

This paper is organized as follows. In Section \ref{section2} we review the geometric setting of GLSM and discuss basic facts about pointlike instantons. In Section \ref{section3} we describe the compactification of the moduli spaces of pointlike instantons. In Section \ref{section4} we set up some abstract framework of our construction. In Section \ref{section5} we give the details of the construction of good coordinate systems and the definition of the virtual count. In Appendix \ref{appendixa} we prove some technical results about pointlike instantons.

\subsection{Acknowledgement}

G.X. would like to thank Professor Kenji Fukaya for stimulating discussions.

\section{Gauged Linear Sigma Model and Pointlike Instantons}\label{section2}

In this section we give the basic geometric setting and review properties of pointlike instantons.

\begin{defn}
A {\bf gauged linear sigma model space} (GLSM space for short) is a quadruple $(V, G, W, \mu)$ where $V$ is a noncompact K\"ahler manifold equipped with a holomorphic ${\mb C}^*$-action (the $R$-symmetry), $G$ is a reductive complex Lie group with maximal compact subgroup $K$ acting on $V$ (which commutes with the $R$-symmetry), $W: V \to {\mb C}$ is a $G$-invariant holomorphic section which is homogeneous of degree $r\geq 1$ with respect to the $R$-symmetry, and $\mu: V \to {\mf k}^*$ is a moment map of the $K$-action with $0 \in {\mf k}^*$ a regular value of $\mu$. 
\end{defn}

We define the {\it semi-stable locus} of $(V, G, W, \mu)$ to be 
\beqn
V^{\rm ss}:= G \mu^{-1}(0).
\eeqn
This is an open subset of $V$. 

\begin{defn}
$(V, G, W, \mu)$ is said to satisfy the {\bf geometric phase condition} if the restriction of $W$ to $V^{\rm ss}$ is a holomorphic Morse--Bott function. In this situation, the {\bf classical vacuum} of $(V, G, W, \mu)$ is the quotient 
\beqn
X:= ({\rm Crit} W \cap \mu^{-1}(0))/ K \subset \mu^{-1}(0)/K.
\eeqn
\end{defn}

We also need three additional hypotheses. The first one is important to the proof of compactness. 

\begin{hyp}
There exists $\xi_W\in {\rm Center}({\mf k})$ and a continuous function $\tau \mapsto c_W(\tau)$ for $\tau\in {\rm Center}({\mf k})$ satisfying the following condition. If we define ${\mc F}_W:= \mu \cdot \xi_W$, then ${\mc F}_W$ is proper and 
\beqn
\begin{array}{c}
x\in {\rm crit} W \cap V^{\rm ss},\ \xi \in T_x V\\
{\mc F}_W(x) \geq c_W(\tau)
\end{array} \Longrightarrow \left\{ \begin{array}{c}  \langle \nabla_\xi \nabla {\mc F}_W(x), \xi \rangle + \langle \nabla_{J\xi} \nabla {\mc F}_W(x), J \xi \rangle \geq 0,\\
                      \langle \nabla {\mc F}_W(x), J {\mc X}_{\mu(x) - \tau}(x) \rangle \geq 0.   \end{array} \right.
\eeqn
\end{hyp}

The second additional hypothesis, which is not completely necessary, is to simplify the discussion if one is not interested in the more general orbifold theory.
\begin{hyp}
The $K$-action on $\mu^{-1}(0)$ is free.
\end{hyp}

It follows that the classical vacuum is a compact K\"ahler manifold. Lastly, we assume that $V$ is aspherical in order to rule out bubbling in $V$.

\begin{hyp}
For all smooth maps $u: S^2 \to V$ there holds $\int_{S^2} u^* \omega = 0$. 
\end{hyp}

\begin{rem}
A special situation is when the superpotential $W$ vanishes. In this situation we can make other assumptions more general. Indeed, when $W = 0$, it is possible that $V$ is compact. Then we only need to assume that $V$ is symplectic with a Hamiltonian $K$-action. This is the setting of the construction of gauged Gromov--Witten invariants \cite{Mundet_thesis, Mundet_2003, Cieliebak_Gaio_Salamon_2000, Cieliebak_Gaio_Mundet_Salamon_2002, Mundet_Tian_2009}.

\end{rem}

\subsection{Pointlike instantons}

We first introduced the gauged Witten equation in \cite{Tian_Xu}. The version under the current setting was introduced in \cite{Tian_Xu_geometric}. In this paper we only consider the case when the domain curve is the complex plane. 

A {\bf gauged map} from ${\mb C}$ to $V$ is a triple of maps 
\beqn
(u, \phi, \psi): {\mb C} \to V \times {\mf k}\times {\mf k}.
\eeqn
We should regard this triple as the presentation of a connection $A:= d + \phi ds + \psi dt$ of the trivial $K$-bundle $P \to {\mb C}$ and a section of the associated bundle $P\times_K V$ written with respect to the trivialization. The gauged Witten equation reads
\beq\label{eqn21}
\left\{ \begin{array}{rcc} \partial_s u + {\mc X}_\phi(u) + J(\partial_t u + {\mc X}_\psi(u)) + \nabla W(u) & = & 0,\\
                          \partial_s \psi - \partial_t \phi + [\phi, \psi] + \mu(u) & = & 0.
\end{array} \right.
\eeq
The energy of a gauged map is defined by 
\beqn
E(u, \phi, \psi) = \frac{1}{2} \left( \| d_A  u \|_{L^2}^2 + \| F_A \|_{L^2}^2 + \| \mu(u) \|_{L^2}^2 \right) + \| \nabla W(u) \|_{L^2}^2.
\eeqn

We always consider only {\bf bounded solutions}, i.e., those solutions with $\ov{u({\mb C})}$ compact and with $E(u, \phi,\psi)$ finite. 

\begin{rem}
To write down the gauged Witten equation over a general Riemann surface, one needs an extra structure called the {\bf $r$-spin structure} if the degree of $W$ is $r$. In the situation of the current paper the $r$-spin structure over the complex plane is always trivial so we do not see it explicitly.
\end{rem}

There have been many studies of pointlike instantons in the special situation when the superpotential vanishes. When $W = 0$, a solution to \eqref{eqn21} is called an {\bf affine vortex}, i.e., a solution $(u, \phi, \psi)$ to
\beq\label{eqn22}
\left\{ \begin{array}{rcc} \partial_s u + {\mc X}_\phi(u) + J( \partial_t u + {\mc X}_\psi(u)) & = & 0,\\
\partial_s \psi - \partial_t \phi + [\phi, \psi] + \mu(u) & = & 0.
\end{array}\right.
\eeq
These objects first appeared in the Ginzburg--Landau theory of superconductivity (see \cite{Jaffe_Taubes}). In symplectic geometry, Gaio--Salamon realized their importance in the ``quantum Kirwan map'' conjecture (see \cite{Gaio_Salamon_2005}). Ziltener \cite{Ziltener_Decay, Ziltener_thesis, Ziltener_book} has made significant advancement towards this conjecture by establishing basic bubbling and Fredholm theories for affine vortices. When $V$ is a projective manifold or a vector space, Venugopalan--Woodward classifies all affine vortices \cite{VW_affine}, which was used in Woodward's proof of the quantum Kirwan map conjecture \cite{Woodward_15} in this setting. Wang and Xu \cite{Wang_Xu} established basic analytical results of affine vortices with Lagrangian boundary conditions. Venugopalan and Xu \cite{Venugopalan_Xu} constructed local Fredholm model for the moduli spaces.

An important result we will need is that all pointlike instantons are also affine vortices. 

\begin{prop}\label{prop28}
If $(u, \phi, \psi)$ is a bounded solution to the gauged Witten equation, then $\nabla W(u) \equiv 0$. Therefore, every pointlike instanton is an affine vortex in $V$. 
\end{prop}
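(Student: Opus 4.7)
The plan is to derive a pointwise identity packaging $|\nabla W(u)|^2$ as a $\bar\partial$-derivative of $W \circ u$, integrate it globally, and use asymptotic decay to kill the boundary term.

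I would first apply $dW$ to the first gauged Witten equation in \eqref{eqn21}. Three inputs collapse most of the right-hand side: (i) $G$-invariance of $W$ gives $dW(\mathcal{X}_\phi) = dW(\mathcal{X}_\psi) = 0$ for $\phi,\psi \in \mathfrak{k}$; (ii) holomorphicity of $W$ gives $dW \circ J = i\,dW$, so $dW(J\mathcal{X}_\psi) = 0$ as well and $dW(J\partial_t u) = i\,\partial_t(W\circ u)$; (iii) $dW(\nabla W) = |\nabla W|^2$, because $\nabla W$ is the real gradient of $\mathrm{Re}\,W$ and $\omega(\nabla W, \nabla W) = 0$ automatically. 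These collapse $dW(\partial_s u + i\partial_t u)$ to $-|\nabla W(u)|^2$, yielding
\beqn
2\partial_{\bar z}(W \circ u) = -|\nabla W(u)|^2.
\eeqn

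The second ingredient is the $R$-symmetry. Because $W$ is $\mathbb{C}^*$-homogeneous of degree $r \geq 1$, Euler's relation $dW(\mathcal{X}_R) = rW$ forces $W$ to vanish on $\mathrm{Crit}\, W$. So if one can show that $u(z)$ approaches $\mathrm{Crit}\, W \cap \mu^{-1}(0)$ with a quantitative rate as $|z| \to \infty$, then a Taylor expansion of $W$ at a limit point $x_\infty$ (where $dW$ vanishes) gives $|W(u(z))| = O(|u(z) - x_\infty|^2)$, and any convergence rate for $u$ better than $|z|^{-1/2}$ produces the crucial bound $|W(u(z))| = o(|z|^{-1})$. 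I would invoke the asymptotic analysis of bounded finite-energy pointlike instantons carried out in Appendix \ref{appendixa} (in the spirit of Ziltener's decay theorems for affine vortices, using the properness hypothesis on $\mathcal{F}_W$ to rule out escape to infinity) to obtain such a rate.

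The third ingredient is integration. Integrating the pointwise identity over a disk $D_R$ and using Stokes in the form $2\partial_{\bar z} f \, ds\wedge dt = -i\, d(f\, dz)$ gives
\beqn
\int_{D_R} |\nabla W(u)|^2 \, ds\, dt = i \int_{\partial D_R} W(u) \, dz,
\eeqn
whose right-hand side is bounded in absolute value by $2\pi R \cdot \sup_{|z|=R} |W(u(z))|$, which tends to $0$ as $R \to \infty$ by the previous step. Passing to the limit yields $\int_{\mathbb{C}} |\nabla W(u)|^2 = 0$, hence $\nabla W(u) \equiv 0$ and $(u, \phi, \psi)$ is an affine vortex.

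The main obstacle is the quantitative decay. Since the circumference of $\partial D_R$ grows linearly in $R$, mere qualitative convergence $W(u) \to 0$ is useless; one truly needs a pointwise rate of decay of $W(u(z))$ strictly beating $|z|^{-1}$. All of that content lives in the asymptotic analysis of Appendix \ref{appendixa}. Granted that input, the three remaining steps---the $\bar\partial$-identity, the $R$-symmetry calculation, and Stokes' theorem---are a short formal computation.
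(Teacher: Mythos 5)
Your proof is essentially the paper's own argument: apply $dW$ to the first gauged Witten equation, exploit $G$-invariance and holomorphicity of $W$ to package $dW(u_s + Ju_t)$ as $2\partial_{\bar z}(W\circ u)$, integrate by Stokes over $B_R$, and kill the boundary term via a pointwise decay of $W(u(z))$ strictly faster than $|z|^{-1}$. The one place you are slightly less careful than the paper is the decay step: invoking Taylor expansion at a fixed limit point $x_\infty$ is gauge-dependent (a bounded affine vortex only converges to a $K$-orbit), whereas the paper's route is cleaner and avoids this: a mean-value inequality in cylindrical coordinates gives $|z|\,(|u_s| + |u_t|) \to 0$, hence $|z|\,|\nabla W(u(z))| \to 0$ directly from the equation, and then $|z|\,|W(u(z))| \to 0$ follows because $0$ is the only critical value of $W$ (combined with the Morse--Bott structure near $\mathrm{Crit}\,W$). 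Otherwise the two arguments coincide.
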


\begin{proof}
See the appendix.
\end{proof}

The above proposition allows us to use existing results of affine vortices from \cite{Gaio_Salamon_2005} \cite{Ziltener_Decay} \cite{Ziltener_book} \cite{Venugopalan_Xu}. In particular one has the following ``removable singularity'' result.

\begin{cor}\cite{Gaio_Salamon_2005}\cite{Ziltener_book}
For each pointlike instanton $(u, \phi,\psi)$, there exists a smooth $K$-bundle $P \to S^2\cong {\mb C} \cup \{\infty\}$ and a continuous section $\tilde u: S^2 \to P(V)$ such that with respect to certain trivializations of $\tilde P|_{{\mb C}}$, one has $\tilde u|_{{\mb C}} = u$. 
\end{cor}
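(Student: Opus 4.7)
The plan is to reduce the statement to the removable singularity theorem for affine vortices and then invoke the cited references. By Proposition \ref{prop28}, every bounded solution $(u,\phi,\psi)$ of the gauged Witten equation \eqref{eqn21} satisfies $\nabla W(u)\equiv 0$ and in particular is an affine vortex, i.e.\ a solution to \eqref{eqn22}, whose image is contained in $\mathrm{Crit}\,W\cap V^{\rm ss}$. This is the content that Proposition \ref{prop28} buys us; once it is in hand, the rest is to quote and assemble the classical removable singularity statement.

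The first step is to control the asymptotic behavior of the energy density. Because $E(u,\phi,\psi)<\infty$ and $\ov{u(\mathbb{C})}$ is compact, a standard mean value inequality for affine vortices (as developed in \cite{Ziltener_Decay}) implies that the energy density $e(u,\phi,\psi)=\tfrac12(|d_A u|^2+|F_A|^2+|\mu(u)|^2)$ decays at a definite rate as $|z|\to\infty$, and the total energy on the annulus $\{R\le |z|\le 2R\}$ tends to $0$. Conformally, pulling back by the inversion $z\mapsto 1/z$ this becomes a finite energy affine vortex defined on a punctured neighborhood of $0$ with vanishing energy at the puncture.

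The second step is to choose a good gauge. On a punctured disk around $\infty$ one places the connection $A=d+\phi\,ds+\psi\,dt$ in Coulomb gauge (or radial/exponential gauge) relative to a suitable trivialization; the finite $L^2$ norm of $F_A$ together with the decay from Step 1 and elliptic regularity then yield that the connection $1$-form has a continuous (in fact H\"older) limit as $|z|\to\infty$, so one obtains a $K$-bundle $P\to S^2$ by clutching the trivial bundle on $\mathbb{C}$ with the trivial bundle on a neighborhood of $\infty$ via the limiting gauge transformation. Simultaneously, the map $u$, now valued in the compact quotient $\mu^{-1}(0)/K$ (since $|\mu(u)|\to 0$), converges to a single $K$-orbit, giving a continuous section $\tilde u:S^2\to P(V)$ with $\tilde u|_{\mathbb{C}}=u$.

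The main obstacle is the second step: obtaining the existence of the asymptotic limit of the gauge field, rather than just a limit along subsequences, and matching it with a continuous extension of $u$. This is exactly where one needs the decay estimates and asymptotic convergence analysis for affine vortices proved in \cite{Ziltener_Decay, Ziltener_book} (refined in \cite{Venugopalan_Xu} for the pointwise decay), so at this point the proof simply cites those results. Since $V$ is aspherical (Hypothesis), no sphere bubbling in $V$ can occur near $\infty$, so the continuous extension is in fact the full content needed.
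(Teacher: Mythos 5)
Your argument matches the paper exactly: Proposition~\ref{prop28} shows pointlike instantons are affine vortices, and then the removable singularity result is obtained by direct citation of the affine-vortex literature, which is precisely what the paper does (the corollary is stated without a separate proof, riding on the reduction given by Proposition~\ref{prop28}). Your additional sketch of the internal steps (energy decay, choice of gauge, clutching construction) is a fair summary of what the cited results establish, though the appeal to asphericity of $V$ at the end is superfluous here since removable singularity for a single finite-energy vortex does not involve bubbling.
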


It follows that each pointlike instanton $u$ has a {\bf degree}
\beqn
d:= [u]:= \tilde u_*[S^2] \in H_2^K(V; {\mb Z}).
\eeqn
By the energy identity of vortices (see \cite{Cieliebak_Gaio_Mundet_Salamon_2002}), if $(u, \phi, \psi)$ has degree $d$, then
\beqn
E(u) = \langle \omega^K, d \rangle\in [0, +\infty)
\eeqn
where $\omega^K\in H^2_K(V; {\mb R})$ is the equivariant cohomology class represented by the equivariant form $\omega - \mu$. 

\subsection{Moduli spaces}

A gauge transformation, in most of the cases considered in this paper, is a smooth map $g: {\mb C} \to K$. If $u = (u, \phi, \psi)$ is a gauged map, then $u_g= (u_g, \phi_g, \psi_g):= g\cdot u$ is the gauged map defined by
\beqn
u_g(z) = g(z) u(z),\ \phi_g(z) = {\rm Ad}_{g(z)} \phi(z) + g(z)^{-1} \partial_s g(z),\ \psi_g(z) = {\rm Ad}_{g(z)} \psi(z) + g(z)^{-1} \partial_t g(z).
\eeqn
The gauged Witten equation \eqref{eqn21} is gauge invariant. If $u$ is a solution and $g$ is a gauge transformation, then $u_g$ is also a solution.

Let $\tilde {\mc M}(V)$ denote the set of gauge equivalence classes of solutions to \eqref{eqn21}. This set is equipped with the $C^\infty$-topology which implies the following notion of sequential convergence: a sequence of gauge equivalence classes $[u_i] \in \tilde {\mc M}(V)$ converges to $[u_\infty]$ if there exists a sequence of gauge transformations $g_i: {\mb C} \to K$ such that $u_{i, g_i}$ converges in $C^\infty$ to $u_\infty$ over any compact subset of ${\mb C}$. This topology is clearly Hausdorff. We also denote by $\tilde {\mc M}(V,d) \subset \tilde {\mc M}(V)$ the subset of gauge equivalence classes of solutions with a specified degree $d$. 

There is also a translation symmetry which is very important for a finite count. There is a ${\mb C}$-action on $\tilde {\mc M}(V)$ which is free on each $\tilde {\mc M}(V, d)$ for all nonzero degree $d$. Denote 
\beqn
{\mc M}(V, d):= \tilde {\mc M}(V, d) / {\mb C}
\eeqn
which is equipped with the quotient topology.

\subsection{Adiabatic limit}

In the compactification of the moduli space one needs to discuss the adiabatic limit of the gauged Witten equation or the vortex equation. Consider the vortex equation over a large disk $B_R \subset {\mb C}$ with a possibly nonflat area form $\sigma(s,t) ds dt$. If we identify $B_R$ with $B_1$ via the rescaling, then the gauged Witten equation over $B_R$ becomes the following equation over $B_1$:
\beq\label{eqn23}
\left\{ \begin{array}{rcc} \partial_s u + {\mc X}_\phi(u) + J( \partial_t u + {\mc X}_\psi) & = & 0,\\
\partial_s \psi - \partial_t \phi + [\phi, \psi] + R^2 \sigma(s,t) \mu(u) & = & 0. \end{array}\right.
\eeq
When $R \to \infty$ one can see naively $\mu(u) \to 0$, formally implying that solutions converge to holomorphic curves in the classical vacuum. As we only need to consider in this paper the situation when the image of the map $u$ is sufficiently close to the level set $\mu^{-1}(0)$, we can regard the solution as a solution to the vortex equation in the smooth part of $dW^{-1}(0)$, thanks to Proposition \ref{prop28}. What we need is a few estimates most of which were derived in \cite{Gaio_Salamon_2005}.

We first define the notion of convergence in adiabatic limit. Recall that there is a holomorphic map 
\beqn
\pi_V: V^{\rm ss} \to X
\eeqn
which sends $x \in V^{\rm ss}$ to its $G$-orbit. 

\begin{defn}\label{defn210}
Let $\Omega \subset {\mb C}$ be an open subset and let $\Omega_i \subset \Omega$ be an exhaustive open subset of $\Omega$, i.e., any compact subset of $\Omega$ is contained in $\Omega_i$ for sufficiently large $i$. Let $\sigma_i: \Omega_i \to (0, +\infty)$ be a sequence of smooth functions converging in $C^\infty_{\rm loc}$ to $\sigma_\infty: \Omega \to (0, +\infty)$. Let $R_i \to + \infty$ be a sequence of real numbers and let $(u_i, \phi_i, \psi_i)$ be a sequence of solutions to \eqref{eqn23} for $R = R_i$ and $\sigma = \sigma_i$ over $\Omega_i$. We say that $u_i$ {\bf converges in the sense of adiabatic limit} to a holomorphic map $u_\infty: \Omega \to X$ if the following are satisfied.
\begin{enumerate}
\item For all compact subset $Z \subset \Omega$ there holds
\beqn
\sup_i \left( \| u_{i,s} \|_{L^\infty(Z)} + R_i \| \sqrt{\sigma_i} \mu(u_i)\|_{L^\infty(Z)}\right) < \infty.
\eeqn
Here we use the notation 
\begin{align*}
&\ u_{i,s} = \partial_s u_i + {\mc X}_{\phi_i}(u_i),\ &\ u_{i,t} = \partial_t u_i + {\mc X}_{\psi_i}(u_i).
\end{align*}
It follows that for $i$ sufficiently large, $u_i(Z)$ is contained in $V^{\rm ss}$. 

\item The composition $\pi_V \circ u_i$ converges to the holomorphic map $u_\infty$ over all compact subsets of $\Omega$. 
\end{enumerate}
\end{defn}

In the construction of local charts of the moduli spaces, one needs to use additional codimension two submanifolds which are transverse to vortices or holomorphic curves. The following result implies that for particular choices of such submanifolds, transversality persists in the adiabatic limit. 

\begin{lemma}\label{lemma211}
Suppose $u_i$ converges to $u_\infty$ as defined above. Then for all compact subsets $Z \subset \Omega$ there holds
\beq\label{eqn24}
\lim_{i \to \infty} \Big( \| d\mu(u_i)(u_{i,s})\|_{L^\infty(Z)} + \| d\mu(u_i)(J u_{i,s}) \|_{L^\infty(Z)} \Big)  = 0.
\eeq
\end{lemma}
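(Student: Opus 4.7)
The plan is to convert the two quantities in \eqref{eqn24} into ordinary spatial derivatives of $\mu \circ u_i$ via the equivariance of $\mu$, and then to derive the decay of those spatial derivatives from the precise coupling $F_{A_i} = -R_i^2\sigma_i\mu(u_i)$ supplied by the second component of \eqref{eqn23}, combined with the adiabatic limit \emph{a priori} estimates for the vortex equation.

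The first step is algebraic. Differentiating the equivariance relation $\mu(e^{t\xi}x) = {\rm Ad}^*_{e^{t\xi}}\mu(x)$ at $t=0$ gives $d\mu_x({\mc X}_\xi(x)) = [\xi,\mu(x)]$ under an ${\rm Ad}$-invariant identification $\mf k^*\cong \mf k$. Writing out $u_{i,s} = \partial_s u_i + {\mc X}_{\phi_i}(u_i)$ and $u_{i,t} = \partial_t u_i + {\mc X}_{\psi_i}(u_i)$, and using the first component of \eqref{eqn23} in the form $u_{i,t} = Ju_{i,s}$, one obtains
\beqn
d\mu(u_i)(u_{i,s}) = \partial_s(\mu\circ u_i) + [\phi_i,\mu(u_i)], \qquad d\mu(u_i)(Ju_{i,s}) = \partial_t(\mu\circ u_i) + [\psi_i,\mu(u_i)].
\eeqn
It therefore suffices to show that the four terms $\partial_s(\mu\circ u_i)$, $\partial_t(\mu\circ u_i)$, $[\phi_i,\mu(u_i)]$, $[\psi_i,\mu(u_i)]$ tend to $0$ in $L^\infty(Z)$.

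The bound $\|\mu\circ u_i\|_{L^\infty(Z)} = O(R_i^{-1})$ is immediate from the hypothesis $R_i\|\sqrt{\sigma_i}\mu(u_i)\|_{L^\infty(Z)}\le C$ together with the uniform positivity of $\sigma_i$ on $Z$ coming from $\sigma_i\to \sigma_\infty>0$ in $C^\infty_{\rm loc}$. To upgrade this to a bound on the derivatives of $\mu\circ u_i$, I would pass to a convenient gauge: the quantities in \eqref{eqn24} are gauge-invariant, since gauge transformations by $K$ act on $d\mu(u_i)$ by ${\rm Ad}^*$ and on $u_{i,s}, u_{i,t}$ covariantly, and we measure norms in $\mf k^*$ by an ${\rm Ad}$-invariant inner product. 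Fixing Coulomb gauge on a slightly enlarged compact $Z'\supset Z$ turns \eqref{eqn23} into an elliptic system for $(u_i,\phi_i,\psi_i)$; the adiabatic limit bootstrapping developed in \cite{Cieliebak_Gaio_Mundet_Salamon_2002, Gaio_Salamon_2005, Ziltener_book}, applied in combination with the $L^\infty$ bound on $u_{i,s}$ and the explicit coupling $F_{A_i} = -R_i^2\sigma_i\mu(u_i)$, yields uniform $L^\infty(Z)$ bounds on $\phi_i, \psi_i$ and the decay $\|\partial_s(\mu\circ u_i)\|_{L^\infty(Z)} + \|\partial_t(\mu\circ u_i)\|_{L^\infty(Z)} \to 0$. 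Substituting into the two identities of the first step gives \eqref{eqn24}.

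The main obstacle is this last derivative-decay step. Because the curvature $F_{A_i}$ can a priori grow like $R_i$, uniform-in-$i$ $C^k$ control on the connection is not automatic; what makes it work is that the same equation $F_{A_i}=-R_i^2\sigma_i\mu(u_i)$ that allows curvature to grow also transfers the $L^\infty$ decay of $\mu\circ u_i$ to its spatial derivatives, once uniform bounds on the second covariant derivatives of $(\phi_i,\psi_i)$ are obtained by elliptic bootstrapping in Coulomb gauge. This closing of the bootstrap is the technical content of the adiabatic limit a priori estimates in the cited references, which we invoke here as a black box.
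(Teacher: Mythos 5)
Your algebraic decomposition via equivariance of $\mu$ is correct, but it does not actually reduce the difficulty: since $\partial_s(\mu\circ u_i) = d\mu(u_i)(\partial_s u_i) = d\mu(u_i)(u_{i,s}) - d\mu(u_i)(\mathcal{X}_{\phi_i}(u_i))$, the claim $\|\partial_s(\mu\circ u_i)\|_{L^\infty}\to 0$ differs from the statement of the lemma only by the bracket term $[\phi_i,\mu(u_i)]$, which is the easy part. So the decomposition repackages the problem rather than simplifying it, and the entire weight rests on the ``derivative-decay'' step, which you invoke as a black box.

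That step is where the gap lies. The adiabatic estimates of Gaio--Salamon and Ziltener do not state the decay $\|\partial_s(\mu\circ u_i)\|_{L^\infty(Z)}\to 0$ directly, nor would such decay follow from the curvature identity $F_{A_i} = -R_i^2\sigma_i\mu(u_i)$ plus $L^\infty$ control alone without further input. What the references actually supply (and what the paper cites precisely, as Ziltener's Proposition~38) is that, after applying gauge transformations, a subsequence of $u_i$ converges in $C^1_{\rm loc}$ to a lift $(u_\infty,\phi_\infty,\psi_\infty)$ of the limiting holomorphic curve, and $A_i\to A_\infty$ in $C^0_{\rm loc}$. The crucial further observation, which your write-up does not explicitly use, is that this lift satisfies $\mu(u_\infty)\equiv 0$ identically, so $d\mu(u_\infty)(u_{\infty,s}) = d\mu(u_\infty)(Ju_{\infty,s}) \equiv 0$. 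Once these two facts are in hand, the conclusion is pure continuity ($u_{i,s}\to u_{\infty,s}$ uniformly on compacts, so $d\mu(u_i)(u_{i,s})\to 0$), followed by the standard ``every subsequence has a further subsequence converging'' argument to upgrade from subsequential to full-sequence convergence. Your Coulomb-gauge bootstrap idea can likely be made to work because it would reprove a version of the same $C^1_{\rm loc}$ compactness, but as written it is an under-specified appeal to the literature that skips the identification of the limit and the role of $\mu(u_\infty)\equiv 0$; the paper's route is shorter and pins the citation to the exact statement needed.

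One minor point: from the first line of \eqref{eqn23} one has $u_{i,s}+Ju_{i,t}=0$, i.e.\ $u_{i,t}=-Ju_{i,s}$, not $u_{i,t}=Ju_{i,s}$; this is only a sign and does not affect the structure of the argument.
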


\begin{proof}
The limiting holomorphic curve $u_\infty$ can be lifted to a gauged map $(u_\infty, \phi_\infty, \psi_\infty)$ solving the equation
\begin{align*}
&\ \partial_s u_\infty + {\mc X}_{\phi_\infty}(u_\infty) + J( \partial_t u_\infty + {\mc X}_{\psi_\infty}(u_\infty)) = 0,\ & \ \mu(u_\infty) \equiv 0.
\end{align*}
It follows that 
\beq\label{eqn25}
d\mu(u_\infty)(u_{\infty, s}) = d\mu(u_\infty)(J u_{\infty, s}) \equiv 0.
\eeq
By \cite[Proposition 38]{Ziltener_book}, after applying a suitable sequence of gauge transformations, for a subsequence $i_k$, $u_{i_k}$ converges to $u_\infty$ in $C^1_{\rm loc}$ and $\phi_{i_k} ds + \psi_{i_k} dt$ converges to $\phi_\infty ds + \psi_\infty dt$ in $C^0_{\rm loc}$.  Then it follows that the covariant derivative $u_{i_k, s}$ converges to $u_{\infty, s}$ uniformly on compact subsets. Then by \eqref{eqn25}, one has 
\beqn
\lim_{i_k \to \infty} \Big( \| d\mu(u_{i_k})(u_{i_k,s})\|_{L^\infty(Z)} + \| d\mu(u_{i_k})(J u_{i_k,s}) \|_{L^\infty(Z)} \Big)  = 0.
\eeqn
As any subsequence of $u_i$ contains a subsequence satisfying the above, \eqref{eqn24} follows. 
\end{proof}

\section{Ziltener Compactification}\label{section3}

In this section we describe the natural compactification of the moduli space of pointlike instantons. Our compactification can be viewed as a special case of the one of the moduli space of affine vortices given by Ziltener \cite{Ziltener_book}.

\subsection{Trees} 

We first recall notions about trees. A tree $\Gamma$ consists of a nonempty set of vertices $V_\Gamma$, a set of edges $E_\Gamma$, and a set of tails (or semi-infinite edges) $T_\Gamma$. In this paper we only consider {\it rooted trees}, i.e., a distinguished vertex $v_\infty \in V_\Gamma$ is specified. Then $V_\Gamma$ can be given a canonical partial order $\leq$ such that $v_\infty$ is the only minimal element. We denote by $v \succ v'$ if $v$ and $v'$ are adjacent and $v'$ is closer to $v_\infty$ and denote by $v \overset{e}{\succ} v'$ if in addition the edge connecting them is $e$. In addition to roots, we also endow each tree with a {\it scaling}, i.e., a function 
\beqn
{\mf s}: V_\Gamma \to \{0, 1, \infty\}.
\eeqn
We require that ${\mf s}$ is order-reversing with respect to the ordering $0 \leq 1 \leq \infty$. Moreover, we require 
\begin{itemize}
\item For any path $v_1 \succ \cdots \succ v_k$ in $\Gamma$ if ${\mf s}(v_1) \leq 1$ and ${\mf s}(v_k) \geq 1$, then there exists a unique $v_i$ in this path such that ${\mf s}(v_i) = 1$. 
\end{itemize}
In other words, vertices of scale $1$ are not adjacent to each other and any two vertices of scale $0$ and $\infty$ respectively are never adjacent. 

\begin{notation}
For each edge $e\in E_\Gamma$, denoted by $v(e)\in V_\Gamma$ one of the two vertices connected to $e$ which is closer to the root. For each tail $t \in T_\Gamma$, denote by $v(t)\in V_\Gamma$ the vertex to which $t$ is attached. 
\end{notation}

\begin{convention}
In this paper, all trees $\Gamma$ we are considering satisfy $V_\Gamma^0 = \emptyset$ and that all tails are attached to vertices in $V_\Gamma^1$. 
\end{convention}

A scaled tree $\Gamma$ (under the above convention) is {\it stable} if each $v \in V_\Gamma^1$ has at least two tails attached and each $v\in V_\Gamma^\infty$ has valence at least three. 

\subsection{Scaled curves}

\begin{defn}
Let $\Gamma$ be a scaled tree. A scaled curve of combinatorial type $\Gamma$ is a collection 
\beqn
{\mc C} = ( (\Sigma_v, \varphi_v)_{v\in {\mb C}},\ (w_e)_{e \in E_\Gamma},\ (z_t)_{t\in T_\Gamma}).
\eeqn
Here each $\Sigma_v$ is a Riemann surface and $\varphi_v: \Sigma_v \to {\mb C}$ is a biholomorphism, each $w_e$ is a point on $\Sigma_{v(e)}$, and each $z_t$ is a point on $\Sigma_{v(t)}$. This collection needs to satisfy the following condition.
\begin{itemize}
\item For each $v \in V_\Gamma$, the collection of points
\beqn
{\bm w}_v:= (w_e)_{v(e) = v} \cup (z_t)_{v(t) = v}
\eeqn
are distinct. They are called {\it special points} on $\Sigma_v$.
\end{itemize}
\end{defn}

Now we define the notion of isomorphism between scaled curves. 

\begin{defn}\label{defn34}
For $i = 1, 2$, let 
\beqn
{\mc C}^i = ((\Sigma_v^i, \varphi_v^i)_{v\in V_\Gamma},\ (w_e^i)_{e\in E_\Gamma},\ (z_t^i)_{t\in T_\Gamma}) 
\eeqn
be two scaled curves. An {\bf isomorphism} from ${\mc C}^1$ to ${\mc C}^2$ is a collection $(\phi_v)_{v\in V_\Gamma}$ of biholomorphisms $\phi_v: \Sigma_v^1 \to \Sigma_v^2$ satisfying the following conditions.
\begin{enumerate}
\item For each $e \in E_\Gamma$, $\phi_{v(e)} (w_e^1) = w_e^2$.

\item For each $t \in T_\Gamma$, $\phi_{v(t)} (z_t^1) = z_t^2$.

\item For each $v \in V_\Gamma^1$, the composition
\beqn
\varphi_v^2 \circ \phi_v \circ (\varphi_v^1)^{-1}: {\mb C} \to {\mb C}
\eeqn
is a translation.
\end{enumerate}
\end{defn}

We say that a scaled curve of type $\Gamma$ is stable if $\Gamma$ is stable. It is easy to see that a scaled curve is stable if and only if it has a trivial automorphism group.

\subsection{Stable pointlike instantons}

\begin{defn}
Let $\Gamma$ be a tree. A {\bf singular pointlike instanton} of type $\Gamma$ is a pair
\beqn
({\mc C}, {\bm u}) = ({\mc C}, (u_v)_{v\in V_\Gamma})
\eeqn
where ${\mc C}$ is a scaled curve of type $\Gamma$ and 
\begin{enumerate}
\item for each $v \in V_\Gamma^\infty$, $u_v: {\mb C}\to X$ is a holomorphic sphere in $X$;

\item for each $v \in V_\Gamma^1$, $u_v = (u_v, \phi_v, \psi_v)$ is a pointlike instanton.
\end{enumerate}
They satisfy
\begin{itemize}

\item ({\bf Matching condition}) For each edge $v \overset{e}{\succ} v'$ of $\Gamma$, there holds
\beqn
\ev_\infty(u_v) = u_{v'}(w_e) \in X.
\eeqn

\noindent The singular pointlike instanton is called a {\bf stable} pointlike instanton if it satisfies

\item ({\bf Stability condition}) If $v\in V_\Gamma$ is unstable, then the energy of $u_v$ is positive. 
\end{itemize}
\end{defn}

We can also define the degree of a singular pointlike instanton. The degree of the holomorphic sphere $u_v: S^2 \to X$, which is a homology class in the classical vacuum, can be lifted to an equivariant homology class via the composition of the map
\beqn
H_2(X; {\mb Z}) \to H_2(V \qu G; {\mb Z}) \to H_2^K(V; {\mb Z}).
\eeqn
We still call the corresponding equivariant homology class the {\it degree} of the holomorphic sphere. Then we define the degree of a stable pointlike instanton to be the sum of the degrees of all of its component, which is an equivariant homology class in $V$ with integral coefficients. 

We define the combinatorial type of a singular pointlike instanton to be a tree $\Gamma$ together with a list of degrees $(d_v)_{v \in V_\Gamma}$ satisfying the stability condition: if $v\in V_\Gamma$ is unstable, then $d_v \neq 0$. Such combinatorial data are called {\bf map types} and denoted by $\lambda$.

\begin{defn}
Let $({\mc C}^i, {\bm u}^i)$ ($i = 1, 2$) be two singular pointlike instantons of domain type $\Gamma$. An isomorphism from $({\mc C}^1, {\bm u}^1)$ to $({\mc C}^2, {\bm u}^2)$ is a pair 
\beqn
((\phi_v)_{v\in V_\Gamma}, (g_v)_{v \in V_\Gamma^1})
\eeqn
where $(\phi_v)_{v\in V_\Gamma}$ is an isomorphism from ${\mc C}^1$ to ${\mc C}^2$ (see Definition \ref{defn34}) and $g_v: \Sigma_v^1 \to K$ is a gauge transformation for all $v \in V_\Gamma^1$ such that 
\begin{enumerate}

\item For all $v \in V_\Gamma^1$, $(u_v^2 \circ \phi_v)_{g_v} = u_v^1$ as gauged maps.

\item For all $v \in V_\Gamma^\infty$, $u_v^2 \circ \phi_v = u_v^1$ as maps into $X$. 
\end{enumerate}
\end{defn}

It is routine to check that a singular pointlike instanton is stable if and only if its automorphism group is finite. Then for each map type $\lambda$, denote by 
\beqn
{\mc M}(V)_\lambda
\eeqn
the set of isomorphism classes of singular pointlike instantons of type $\Gamma$. Then for each degree $d \in H_2^K( V; {\mb Z})$, define
\beqn
\ov{\mc M}(V;d):= \bigsqcup_{d_\lambda = d} {\mc M}(V)_\lambda.
\eeqn

\subsection{Topology on the moduli space}

It is time to define the topology of the moduli space $\ov{\mc M}(V;d)$ via a notion of sequential convergence. We need first to define how a sequence of smooth objects converge to a singular one. 

In the following definition, since the domains of components are all regarded as ${\mb C}$, M\"obius transformations which are typically used in defining convergence of stable holomorphic curves are replaced by affine linear maps $z \mapsto \epsilon z + w$; when $\epsilon = 1$ we call them translations.

\begin{defn}\label{defn37}
Let $u_1, \ldots, $ be a sequence of (smooth) pointlike instantons. Let $({\mc C}, {\bm u})$ be a stable pointlike instanton of map type $\lambda$ and domain type $\Gamma$. We say that $\{ u_i \}$ converges (modulo translation and gauge transformation) to $({\mc C}, {\bm u})$ if the following conditions hold.

\begin{enumerate}

\item For each $v \in V_\Gamma^1$, there exists a sequence of translations $\varphi_i: \Sigma_v \to {\mb C}$ and a sequence of gauge transformations $g_i: \Sigma_v \to K$ such that $(u_i \circ \varphi_i)_{g_i}$ converges to $u_v = (u_v, \phi_v, \psi_v)$ away from the set $W_v$. 

\item For each $v \in V_\Gamma^\infty$, there exists a sequence of biholomorphic maps $\varphi_{i, v}: \Sigma_v \to {\mb C}$ such that the sequence of pullback area forms $\varphi_{i, v}^* (ds dt)$ converge to infinity on compact subsets of $\Sigma_v$ and the sequence $u_i \circ \varphi_i$ of gauged maps on $\Sigma_v$ converges to the holomorphic map $u_v: \Sigma_v \to X$ (in the adiabatic sense, see Definition \ref{defn210}). 

\item For each edge $v  \overset{e}{\succ} v'$, there holds $\varphi_{i,v'}^{-1}\circ \varphi_{i,v}$ converges uniformly on compact subsets of ${\mb C}$ to $w_e \in \Sigma_{v'}$.

\item There is no energy lost, i.e., 
\beqn
\lim_{i \to \infty} E(u_i) = E({\mc C}):= \sum_{v_\alpha\in V_\Gamma} E(u_\alpha).
\eeqn
\end{enumerate}
\end{defn}

The following compactness theorem follows directly from Ziltener's compactness theorem for affine vortices \cite[Theorem 3]{Ziltener_book} and Proposition \ref{prop28}.

\begin{thm}\cite[Theorem 3]{Ziltener_book} Let $u_i$ be a sequence of pointlike instantons satisfying $\sup_i E(u_i) < \infty$. Then there exists a subsequence (still indexed by $i$) which converges modulo translation and gauge transformation to a stable pointlike instanton. 
\end{thm}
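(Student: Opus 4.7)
The plan is to reduce to Ziltener's compactness theorem for affine vortices and then reinterpret the limit in our language. By Proposition \ref{prop28}, every bounded solution $u_i$ of the gauged Witten equation \eqref{eqn21} satisfies $\nabla W(u_i)\equiv 0$, so each $u_i$ actually solves the affine vortex equation \eqref{eqn22} with the same energy. Hence the hypothesis $\sup_i E(u_i)<\infty$ is precisely an energy bound for a sequence of affine vortices.

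First I would invoke \cite[Theorem~3]{Ziltener_book} to extract a subsequence converging to a stable limiting bubble tree in Ziltener's sense. The output consists of a rooted scaled tree $\Gamma$ whose scale-$1$ vertices carry affine vortices $(u_v,\phi_v,\psi_v)$ in $V$ and whose scale-$\infty$ vertices carry holomorphic spheres $u_v:{\mb C}\to X$, together with, for each scale-$1$ vertex, a sequence of translations and gauge transformations and, for each scale-$\infty$ vertex, a sequence of affine rescalings, realizing the convergence and having no energy loss. The matching condition at every edge, the stability of each component, and the evaluations $\ev_\infty(u_v)\in X$ are all part of Ziltener's output.

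Next I would translate this data into our setup. Since $\nabla W$ vanishes identically along the sequence, the limiting scale-$1$ components are affine vortices whose image lies in ${\rm Crit}\, W\cap V^{\rm ss}$, so Proposition \ref{prop28} identifies them as pointlike instantons in the sense of Section \ref{section2}. The scale-$\infty$ components are holomorphic spheres in $X$. It remains to check that convergence in Ziltener's sense coincides with Definition \ref{defn37}. On scale-$1$ vertices this is immediate. For scale-$\infty$ vertices, the biholomorphisms $\varphi_{i,v}$ rescale the area form by a factor $R_i^2\to\infty$, so the pulled-back solutions satisfy \eqref{eqn23} with $R=R_i$; Ziltener's bounds on $\|u_{i,s}\|_{L^\infty}$ and on $R_i\|\mu(u_i)\|_{L^\infty}$ over compact sets, combined with the convergence of $\pi_V\circ u_i$, give exactly the two conditions of Definition \ref{defn210}.

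The main obstacle, I expect, is the purely combinatorial task of identifying Ziltener's tree/scaling convention with ours: checking that tails remain attached only to scale-$1$ vertices, that the partial order and the degrees $(d_v)_{v\in V_\Gamma}$ assemble into a map type $\lambda$, and that the pathwise constraint between scales $0$, $1$ and $\infty$ is satisfied. Since pointlike instantons are themselves affine vortices with $\nabla W\equiv 0$, no scale-$0$ vertex appears in the limit, which makes the last constraint vacuous; the remaining checks are mechanical. Once this translation is fixed, the theorem follows immediately from \cite[Theorem~3]{Ziltener_book} together with Proposition \ref{prop28}.
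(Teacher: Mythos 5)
Your reduction to Ziltener's theorem via Proposition \ref{prop28} is the right idea and matches the paper's strategy, but you have omitted a necessary step that the paper makes explicit: since $V$ is noncompact, an energy bound alone does not put you in a position to apply Ziltener's compactness theorem. One must first show that the images $u_i(\mathbb{C})$ are contained in a \emph{common compact subset} of $V$, so that no part of the sequence escapes to infinity in the target. The paper does this by the method of \cite[Theorem 5.11]{Tian_Xu_geometric}, which hinges on Hypothesis 2.3 (the properness of ${\mc F}_W = \mu\cdot\xi_W$ and the associated convexity-type estimates). Without this step, the invocation of Ziltener's Theorem 3 is not justified. This is a genuine gap, not merely an omitted reference: the hypothesis that supplies the $C^0$ bound is a geometric input specific to the GLSM setting and does not come for free from the energy bound.

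Two smaller remarks. First, your citation of Proposition \ref{prop28} to conclude that the limiting scale-$1$ components are pointlike instantons is backwards: Proposition \ref{prop28} says a bounded solution of the gauged Witten equation satisfies $\nabla W\equiv 0$, not the converse. What you actually need is the trivial observation that an affine vortex whose image lies in $dW^{-1}(0)$ automatically satisfies the gauged Witten equation, since the $\nabla W(u)$ term vanishes identically; the containment of the limit's image in $dW^{-1}(0)$ follows from $C^\infty_{\rm loc}$ convergence and the closedness of $dW^{-1}(0)$. Second, your assertion that the limiting scale-$1$ components have image in $V^{\rm ss}$ is neither needed nor generally true for pointlike instantons, and should be dropped. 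Your elaboration of the tree/scaling bookkeeping and the identification of Ziltener's convergence with Definition \ref{defn37} is correct and more detailed than the paper, which treats it as immediate; that part is fine.
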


\begin{proof}
First, by the same method of \cite[Theorem 5.11]{Tian_Xu_geometric}, one can prove that the image of $u_i$ are contained in a common compact subset of $V$. Further, by Proposition \ref{prop28}, $u_i$ are affine vortices in $V$. Then by Ziltener's compactness theorem, a subsequence converges to a stable affine vortex. Since all $u_i$ are contained in $dW^{-1}(0)$, so is the limit. Hence the limiting object is also a stable pointlike instanton. 
\end{proof}

From Definition \ref{defn37}, combining with the notion of Gromov convergence of stable pseudoholomorphic curves, one can write down a long and tedious definition of the notion of sequential convergence of stable pointlike instantons. Using the same argument as \cite[Section 5.6]{McDuff_Salamon_2004}, one can show that this topology is compact, Hausdorff, and first-countable, whose set of convergence sequences is the same as the set of Gromov convergent sequences. We leave the details to the reader. We do not prove that the topology is second countable and hence metrizable, as it is already very complicated for pseudoholomorphic maps (see \cite{McDuff_Salamon_erratum_2}). What we need is that every stratum of the moduli space is metrizable (see Lemma \ref{lemma56} and Remark \ref{rem57}).

\section{The Abstract Setup}\label{section4}

Our purpose is to define a homology class in the classical vacuum (with rational coefficients) which is formally the pushforward of the fundamental class (VFC) $[\ov{\mc M}(V;d)]^{\rm vir}$ under the evaluation map 
\beqn
\ev_\infty: \ov{\mc M}(V;d) \to X.
\eeqn
The traditional construction of the VFC requires a sophisticated gluing construction which would be a generalization of the gluing result of the author \cite{Xu_glue}. 

The basic idea of ``VFC without gluing'' is to construct a pseudocycle inside a virtual neighborhood of $\ov{\mc M}(V;d)$ without worrying about the structure between strata. Here we further simplify the construction by considering only index zero moduli spaces. Indeed, suppose the expected dimension of $\ov{\mc M}(V;d)$ is $m_d - 2$ (see Proposition \ref{propa4}), where $m_d$ is the index of the linearized Fredholm operator and $-2$ comes from the translation symmetry. Assume that we can choose a set of smooth cycles
\beqn
S_1, \ldots, S_k \subset X
\eeqn
whose Poincar\'e duals form a basis of $H^{m_d-2}(X; {\mb Q})$. Let 
\beqn
\ov{\mc M}_{S_i}(V; d) \subset \ov{\mc M} (V; d)
\eeqn
be the subset of elements whose evaluations lie in $S_i \subset X$. Then each $\ov{\mc M}_{S_i}(V; d)$ has expected dimension zero. Our purpose is then reduced to the definition of virtual counts 
\beqn
\# \ov{\mc M}_{S_i} (V; d) \in {\mb Q}
\eeqn
which correspond to the coefficients of a linear combination expression the pushforward of the expected VFC.

From now on, for notational purposes, we abbreviate $S_i$ by $S$ which is a smooth generator of  $H^{m_d-2}(V; {\mb Q})$. The more general case when $S_i$ are pseudocycles rather than compact cycles, can be derived without essential difficulty (see Remark \ref{rem520}). We need to define the virtual count of $\ov{\mc M}_S(V; d)$. We abbreviate 
\beqn
\ov{\mc M}:= \ov{\mc M}_S(V; d).
\eeqn

\subsection{Abstract charts}

We incorporate the abstract notions of Kuranishi charts of \cite{MW_3} and \cite{FOOO_2016}. Let ${\mc M}$ be a compact and Hausdorff space. 

\begin{defn}
An {\bf abstract} chart on ${\mc M}$ is a tuple 
\beqn
C = (U, E, S, \psi, F)
\eeqn
where
\begin{enumerate}

\item $U$ (the domain) is a locally compact Hausdorff normal topological space.

\item $E$ (the obstruction bundle) is a continuous map $\pi_E: E \to U$ from a locally compact Hausdorff space $E$ together with a zero section $0_E: U \to E$ which is a continuous map satisfying $\pi_E \circ 0_E = {\rm Id}_U$.

\item $S$ (the Kuranishi map) is a continuous map $S: U \to E$ satisfying $\pi_E \circ S = {\rm Id}_U$. Define $S^{-1}(0):= \{x \in U\ |\ S(x) = 0_E(x)\}$.

\item $F$ (the footprint) is an open subset of ${\mc M}$ ($F$ could be empty).

\item $\psi: S^{-1}(0) \to F$ (the footprint map) is a homeomorphism.
\end{enumerate}
\end{defn}

If $C = (U, E, S, \psi, F)$ is an abstract chart and $U' \subset U$ is an open subset, then one can restrict $C$ to $U'$ in a natural way and obtain another chart $C' = (U', E', S', \psi',F')$ where $E':= \pi_E^{-1}(U')$, $S': = S|_{U'}$, $\psi': = \psi|_{S^{-1}(0) \cap U'}$ and $F':= {\rm Im} \psi'$.

Now we can talk about coordinate changes. A {\bf topological embedding} from a topological space $W$ to another topological space $W'$ is a continuous map $\phi: W \to W'$ which is a homeomorphism onto its image (with respect to the subspace topology).

\begin{defn}\label{defn42}
Let $C_1, C_2$ be two charts. A {\bf coordinate change} from $C_1$ to $C_2$ is 
\beqn
T_{21} = (U_{21}, \hat\phi_{21})
\eeqn
where $U_{21} \subset U_1$ is an open subset, $\hat\phi_{21}: \pi_{E_1}^{-1}(U_{21}) \to E_2$ is a topological embedding satisfying 
\begin{enumerate}
\item $\hat\phi_{21}$ is a bundle map, namely, there is a topological embedding $\phi_{21}: U_{21} \to U_2$ (which must be unique) such that the following diagram commutes.
\beqn
\vcenter{ \xymatrix{   \pi_{E_1}^{-1}(U_{21}) \ar[dd]_-{\pi_{E_1}} \ar[rr]^-{\hat\phi_{21}} &  & E_2 \ar[dd]^{\pi_{E_2}} \\
& & \\
             U_{21} \ar@/^2pc/[uu]^{S_{E_1}} \ar@/_2pc/[uu]_{0_{E_1}} \ar[rr]_-{\phi_{21}} &  & U_2  \ar@/^2pc/[uu]_{ S_{E_2}} \ar@/_2pc/[uu]_{0_{E_2}} }}.
\eeqn

\item The above commutative diagram implies that $\phi_{21}$ maps $U_{21} \cap S_1^{-1}(0)$ into $S_2^{-1}(0)$. Then we require that the following diagram commutes.
\beqn
\vcenter{ \xymatrix{ U_{21} \cap S_1^{-1}(0) \ar[d]_{\psi_1} \ar[r]^-{\phi_{21}} & S_2^{-1}(0)  \ar[d]^{\psi_2} \\
                     {\mc M} \ar[r]^{{\rm Id}_{{\mc M}}} & {\mc M}  } }.
\eeqn

\item It follows from above that $\psi_1( U_{21} \cap S_1^{-1}(0)) \subset  F_1 \cap F_2$. We require that 
$$\psi_1(U_{21} \cap S_1^{-1}(0)) = F_1 \cap F_2.$$

\item The graph of $\phi_{21}$ viewed as a subset of $U_1 \times U_2$ is closed. Equivalently, it means if $x_n \in U_{21}$ converges to $x_\infty \in U_1$ and $y_n:= \phi_{21}(x_n)$ converges to $y_\infty \in U_2$, then $x_\infty \in U_{21}$ (and hence $y_\infty = \phi_{21}(x_\infty)$).\footnote{This condition is not required in other Kuranishi type approaches. However it can be achieved in the concrete construction and is very convenient for various operations.}
\end{enumerate}
\end{defn}

\begin{defn}(cf. \cite{FOOO_2016})\label{defn43}
An {\bf atlas} on $Z$ is a collection 
\beqn
{\mc A} = ({\mc I}, (C_I)_{I \in {\mc I}}, (T_{JI})_{I \preceq J})
\eeqn
where ${\mc I}$ is a finite partially ordered set, $C_I$ is a collection of charts on ${\mc M}$ and $T_{JI}$ is a collection of coordinate changes. We require the following conditions.
\begin{enumerate}
\item {\bf (Identity condition)} When $I = J$, $T_{II}$ is the identity coordinate change.

\item {\bf (Cocycle condition)} If $I\preceq J \preceq K$, denote $U_{KJI} = U_{KI} \cap \phi_{JI}^{-1}(U_{KJ}) \subset U_I$. Then we require that 
\beqn
\hat\phi_{KI}|_{U_{KJI}} = \hat \phi_{KJ} \circ \hat \phi_{JI}|_{U_{KJI}}.
\eeqn

\item {\bf (Overlapping condition)} If $\ov{F_I}\cap \ov{F_J} \neq \emptyset$, then either $I \preceq J$ or $J \preceq I$.

\item {\bf (Covering condition)} ${\mc M} = \bigcup_{I \in {\mc I}} F_I$.
\end{enumerate}
\end{defn}

The inductive construction of perturbations requires more conditions on the atlas. The condition was originally pointed out by D. Joyce. Define a binary relation $\curlyvee$ resp. $\tilde \curlyvee$ on the disjoint union  
\beqn
\bigsqcup_{I \in {\mc I}} U_I\ \ {\rm resp.}\ \ \bigsqcup_{I \in {\mc I}} E_I
\eeqn
as follows. We define $x \curlyvee y$ for $x \in U_I$ and $y\in U_J$ if one of the following holds
\begin{enumerate}
\item $I = J$ and $x = y$.

\item $I \preceq J$, $x \in U_{JI}$, and $y = \phi_{JI}(x)$.

\item $J \preceq I$, $y \in U_{IJ}$, and $x = \phi_{IJ}(y)$.
\end{enumerate}
We define $\tilde \curlyvee$ in a similar fashion. If $\curlyvee$ resp. $\tilde \curlyvee$ is an equivalence relation, then we can define the quotient space 
\beqn
|{\mc A}|:=  \left( \bigsqcup_{I \in {\mc I}} U_I \right) / \curlyvee\ \ {\rm resp.}\ \ |{\mc E}|:= \left( \bigsqcup_{I \in {\mc I}} E_I \right)/ \tilde \curlyvee.
\eeqn
called a {\bf virtual neighborhood} of ${\mc M}$ resp. {\bf virtual obstruction bundle}. They are equipped by default the quotient topology.

\begin{defn}
An atlas on ${\mc M}$ is called a 	{\bf good coordinate system} if
\begin{enumerate}
\item The relation $\curlyvee$ is an equivalence relation.

\item The virtual neighborhood $|{\mc A}|$ is a Hausdorff space.

\item The natural map $U_I \to |{\mc A}|$ for each $I$ is a topological embedding.
\end{enumerate}
\end{defn}

\begin{lemma}
Suppose ${\mc A}$ is a good coordinate system on ${\mc M}$. Then $\tilde \curlyvee$ is also an equivalence relation. Moreover, the bundle maps $\pi_I$ and $0_I$ define natural continuous maps 
\beqn
0_{\mc A}: |{\mc A}| \to |{\mc E}|,\ \pi_{\mc A}: |{\mc E}| \to |{\mc A}|.
\eeqn
\end{lemma}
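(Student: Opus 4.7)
The plan is to reduce the whole statement to the transitivity of $\tilde\curlyvee$, and then to produce $0_{\mc A}$ and $\pi_{\mc A}$ by the universal property of the quotient topology. Reflexivity and symmetry of $\tilde\curlyvee$ are immediate from the defining list, exactly as for $\curlyvee$. For the descent of maps, once $\tilde\curlyvee$ is known to be an equivalence relation, assembling $\{0_{E_I}\}_I$ gives a continuous map $\bigsqcup_I U_I \to \bigsqcup_I E_I$; the bundle-map property of each $\hat\phi_{JI}$ says $\hat\phi_{JI}\circ 0_{E_I} = 0_{E_J}\circ \phi_{JI}$, so $x\curlyvee y$ forces $0_{E_I}(x)\tilde\curlyvee 0_{E_J}(y)$, and the composition $\bigsqcup_I U_I \to \bigsqcup_I E_I \to |{\mc E}|$ descends through $|{\mc A}|$ to a continuous $0_{\mc A}: |{\mc A}| \to |{\mc E}|$. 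The projection $\pi_{\mc A}$ is produced identically from $\pi_{E_J}\circ \hat\phi_{JI} = \phi_{JI}\circ \pi_{E_I}$.

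The core work is therefore transitivity of $\tilde\curlyvee$. Given $e_1 \tilde\curlyvee e_2$ and $e_2 \tilde\curlyvee e_3$ with $e_i \in E_{I_i}$, set $x_i := \pi_{E_{I_i}}(e_i)$. Applying $\pi_{E_{I_i}}$ to the two given relations and using that each $\hat\phi_{JI}$ covers $\phi_{JI}$ produces $x_1 \curlyvee x_2 \curlyvee x_3$ in $\bigsqcup_I U_I$; by the standing hypothesis $x_1 \curlyvee x_3$, and after interchanging $1$ and $3$ if necessary the definition of $\curlyvee$ gives $I_1 \preceq I_3$ with $x_3 = \phi_{I_3 I_1}(x_1)$. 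The remaining task is to deduce $e_3 = \hat\phi_{I_3 I_1}(e_1)$, which I will organise by the position of $I_2$; the chain cases $I_1\preceq I_2\preceq I_3$ and $I_3\preceq I_2\preceq I_1$ will reduce to a single application of the cocycle identity for $\hat\phi$.

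The substantive cases will be when $I_2$ is maximal ($I_1\preceq I_2$, $I_3\preceq I_2$) or minimal ($I_2\preceq I_1$, $I_2\preceq I_3$). In the maximal case, $x_1 \in U_{I_2 I_1}$ and $\phi_{I_3 I_1}(x_1) = x_3 \in U_{I_2 I_3}$ place $x_1$ in the triple overlap $U_{I_2 I_3 I_1}$, so the cocycle condition will yield $\hat\phi_{I_2 I_1}(e_1) = \hat\phi_{I_2 I_3}\bigl(\hat\phi_{I_3 I_1}(e_1)\bigr)$. Combining with $\hat\phi_{I_2 I_1}(e_1) = e_2 = \hat\phi_{I_2 I_3}(e_3)$ and invoking injectivity of the topological embedding $\hat\phi_{I_2 I_3}$ then forces $e_3 = \hat\phi_{I_3 I_1}(e_1)$. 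The minimal case is dual: $e_1 = \hat\phi_{I_1 I_2}(e_2)$ and $e_3 = \hat\phi_{I_3 I_2}(e_2)$, and after checking that $x_2$ lies in $U_{I_3 I_1 I_2}$ (which follows from the already known $x_1 \in U_{I_3 I_1}$), the cocycle $\hat\phi_{I_3 I_2} = \hat\phi_{I_3 I_1}\circ \hat\phi_{I_1 I_2}$ gives $e_3 = \hat\phi_{I_3 I_1}(e_1)$ directly.

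The main obstacle will be the maximal case, where no direct bundle map links $I_1$ and $I_3$ and the fibrewise data must be recovered from the common image in $E_{I_2}$. This is precisely where the argument relies on three ingredients working together: the hypothesis that the base relation $\curlyvee$ is already an equivalence (so that $I_1$ and $I_3$ are comparable), the cocycle identity on the triple $I_1 \preceq I_3 \preceq I_2$, and the injectivity built into the notion of topological embedding in Definition \ref{defn42}. Once this step is in hand, the remaining pieces — the chain and minimal cases, and then the construction of $0_{\mc A}$ and $\pi_{\mc A}$ — assemble as sketched.
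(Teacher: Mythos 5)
Your proposal is correct and takes essentially the same route the paper indicates: the paper's proof is a single sentence asserting that transitivity of $\tilde\curlyvee$ follows from the hypothesis that $\curlyvee$ is an equivalence relation together with the cocycle condition, and that continuity of $0_{\mc A}$ and $\pi_{\mc A}$ follows from the quotient topology — your case analysis (chain, maximal, minimal positions of $I_2$, using the $\curlyvee$-equivalence to get $I_1$ and $I_3$ comparable, then the cocycle identity plus injectivity of the topological embeddings) is a careful unpacking of exactly those two ingredients, and the descent of $0_{\mc A}$, $\pi_{\mc A}$ via compatibility with bundle maps is the quotient-topology argument the paper gestures at.
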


\begin{proof}
The claim that $\tilde \curlyvee$ is an equivalence relation follows from the assumption that $\curlyvee$ is an equivalence relation and the cocycle condition. The claim that $0_{\mc A}$ and $\pi_{\mc A}$ are continuous follows from the definition of quotient topology.
\end{proof}

%The following lemma, which will be useful in arguing XXXXX, can be verified by basic set-theoretic argument.

%\begin{lemma}
%Let ${\mc A}$ be a good coordinate system on $Z \subset {\mc M}$. For all $I \in {\mc I}$ choose arbitrary subsets $W_I \subset U_I$ and define for each pair $I \preceq J$
%\beqn
%W_{JI}:= W_I \cap \phi_{JI}^{-1}(W_J) \subset U_I.
%\eeqn
%Define binary relation $\curlyvee_W$ on the disjoint union of $W_I$ in the same way. Then $%\curlyvee_W$ is an equivalence relation. %
%\end{lemma}

In applications good coordinate systems are often obtained by shrinking a given atlas. One is tempted to prove an abstract version of such a shrinking construction as \cite{FOOO_2016}, which would require more properties of charts such as metrizability. We will, however, prove such a shrinking lemma in the concrete situation in the next section.

\subsection{Stratified atlas}

We need the structure of certain ``stratifications'' on the moduli space for our construction. Here we give a precise definition. Let $(\Lambda, \leq)$ be a finite partially ordered sets equipped with a strictly increasing index function 
\beqn
{\rm ind}: \Lambda \to {\mb Z}.
\eeqn
A $\Lambda$-stratification on a topological space ${\mc M}$ is a decomposition
\beqn
{\mc M} = \bigsqcup_{\lambda \in \Lambda} {\mc M}_\lambda
\eeqn
by locally closed sets satisfying 
\beqn
\ov{\mc M}_\lambda \cap {\mc M}_{\lambda'} \neq \emptyset \Longrightarrow \lambda' \leq \lambda.
\eeqn
A continuous map between two $\Lambda$-stratified spaces are called stratified if it maps the $\lambda$-stratum of the domain into the $\lambda$-stratum of the target. 

\begin{defn}
Suppose ${\mc M}$ is compact Hausdorff and is $\Lambda$-stratified. A {\bf stratified chart} on ${\mc M}$ is a chart $C = (U, E, S, \psi, F)$ on ${\mc M}$ together with a $\Lambda$-stratification
\beqn
U = \bigsqcup_{\lambda \in \Lambda} U_\lambda
\eeqn
satisfying the following conditions.
\begin{enumerate}

\item Each stratum $U_\lambda \subset U$ is a topological orbifold and the restriction $E_\lambda:= E|_{U_\lambda}$ has the structure of an orbifold vector bundle. Moreover, 
\beqn
{\rm dim} U_\lambda - {\rm rank} E_\lambda = {\rm ind}(\lambda).
\eeqn

\item The map $\psi: S^{-1}(0) \to {\mc M}$ is stratified.

\end{enumerate}
\end{defn}

\subsection{Stratified transversality}

Now we prove the stratified transversality result in the relative setting. Recall that the notion of transversality in the $C^0$ category is the microbundle transversality. For maps into a vector space/sections of a vector bundle, there is a canonical microbundle to define transversality to the origin/zero section.

\begin{prop}\label{prop47}
Suppose $U$ is a locally compact, normal topological space with a $\Lambda$-stratification whose strata $U_\lambda$ are topological manifolds of dimension $d_\lambda$. Let $E \to U$ be a real vector bundle such that
\beq\label{eqn41}
\forall \lambda\ d_\lambda - {\rm rank} E = {\rm ind}(\lambda).
\eeq
Moreover, assume that 
\beqn
{\rm ind}(\lambda) \geq 0 \Longrightarrow \lambda\ {\rm is\ maximal}.
\eeqn
Let $Z \subset W \subset U$ be closed sets. Let $S: U \to E$ be a continuous section and let $t_Z: U \to E$ be another continuous section such that $S + t_Z$ is transverse over each stratum near $Z$. Then there exists a continuous section $t_W: U \to E$ such that
\begin{enumerate}
\item $t_Z = t_W$ over a neighborhood of $Z$.

\item $S + t_W$ is transverse over each stratum near $W$.

\end{enumerate}
Moreover, for any $\epsilon>0$ and any compact subset $Q \subset U$, we can require that 
\beqn
\| t_W \|_{C^0(Q)} \leq \| t_Z \|_{C^0(Q)} + \epsilon.
\eeqn
\end{prop}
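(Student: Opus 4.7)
The plan is to construct $t_W$ by induction on the strata. Enumerate $\Lambda=\{\lambda_1,\ldots,\lambda_N\}$ compatibly with the partial order (possible since ${\rm ind}$ is strictly increasing along $\leq$, so one may sort by increasing index). At stage $k$ I will build a continuous section $t_k:U\to E$ and a closed set $W_k\subset U$ satisfying: (i) $W_k$ is a closed neighborhood of $Z\cup\bigsqcup_{j\leq k}(W\cap U_{\lambda_j})$ in $U$; (ii) for every $j\leq k$, $S+t_k$ is transverse on the stratum $U_{\lambda_j}$ in an open neighborhood (inside $U_{\lambda_j}$) of $W\cap U_{\lambda_j}$; (iii) $t_k=t_{k-1}$ on an open neighborhood of $W_{k-1}$; (iv) $\|t_k-t_{k-1}\|_{C^0(Q)}\leq\epsilon\cdot 2^{-k}$. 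The base case $t_0:=t_Z$, with $W_0$ a closed neighborhood of $Z$ on which $S+t_Z$ is already transverse, uses openness of transversality. Setting $t_W:=t_N$ gives the desired $t_W$, and the $C^0$-bound follows from $\sum_k 2^{-k}<1$.

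The inductive step rests on a single-stratum relative topological transversality lemma: over the topological manifold $U_{\lambda_k}$ with the restricted vector bundle $E|_{U_{\lambda_k}}$, given a compact $K$ and a continuous section that is already transverse on an open set $A$, an arbitrarily $C^0$-small, compactly supported perturbation produces a section transverse to zero on an open neighborhood of $K\cup A$. When ${\rm ind}(\lambda_k)<0$, the relation \eqref{eqn41} forces $d_{\lambda_k}<{\rm rank}\,E$, so ``transverse'' means ``nowhere zero,'' and the lemma is a soft partition-of-unity construction of a nonvanishing perturbation. When $\lambda_k$ is maximal with ${\rm ind}(\lambda_k)\geq 0$, one invokes microbundle transversality for continuous sections of vector bundles over topological manifolds, the same input underlying the $C^0$-VFC framework of \cite{Li_Tian} and \cite{Tian_Xu_geometric}. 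I apply this with $K=K_k\subset U_{\lambda_k}\cap W$ compact, chosen so that $K_k\cup{\rm int}\,W_{k-1}$ covers $W\cap\bigsqcup_{j\leq k}U_{\lambda_j}$, and with $A$ an open neighborhood of $K_k\cap W_{k-1}$ inside $U_{\lambda_k}$ on which $S+t_{k-1}$ is transverse by the inductive hypothesis.

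The stratumwise perturbation $\sigma_k$ is then extended to a continuous section $\tilde\sigma_k$ of $E$ over a neighborhood of $K_k$ in the ambient space $U$ via the Tietze extension theorem, applied in local trivializations of $E$ and patched by a partition of unity (using normality of $U$); the extension is arranged to vanish near $W_{k-1}$ and to satisfy $\|\tilde\sigma_k\|_{C^0(Q)}\leq\epsilon\cdot 2^{-k}$. A Urysohn bump function $\beta_k:U\to[0,1]$ that vanishes on a neighborhood of $W_{k-1}$ and equals $1$ on a neighborhood of $K_k\setminus W_{k-1}$ then yields $t_k:=t_{k-1}+\beta_k\tilde\sigma_k$, and $W_k$ is defined as a closed neighborhood of $W_{k-1}\cup K_k$ inside the open set on which the new stratumwise transversality holds.

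I expect the main obstacle to be the single-stratum topological transversality lemma itself: in the $C^\infty$-category, Sard's theorem makes this elementary, but the present $C^0$-setting demands microbundle transversality, which is delicate especially in the positive-index case. The remaining verifications are bookkeeping. In particular, preservation of transversality on previously handled strata $\lambda_j$ ($j<k$) near $K_k$ is automatic: any point of $U_{\lambda_j}$ arising as a limit of points in $K_k\subset U_{\lambda_k}\cap W$ satisfies $\lambda_j\leq\lambda_k$ by the defining closure condition on $\Lambda$-strata, and lies in $W\cap U_{\lambda_j}$ (since $W$ is closed) hence in the interior of $W_{k-1}$ by invariant (i) at stage $k-1$, so $\beta_k$ vanishes there and $t_k=t_{k-1}$. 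Transversality over any stratum $\lambda_l$ with $l>k$ is not claimed at stage $k$ and will be installed at stage $l$, with later steps preserving earlier transversality by the same argument.
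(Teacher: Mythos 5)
Your strategy matches the paper's: order the strata by increasing index, induct one stratum at a time, invoke the Kirby--Siebenmann/Quinn topological transversality extension theorem on each stratum, return to the ambient $U$ via Tietze extension, and use the index constraint \eqref{eqn41} to recognize that on a non-maximal stratum ``transverse'' means ``nowhere vanishing.'' The packaging is slightly different — you arrange compatibility with the previous step via a Urysohn bump $\beta_k$ and track $C^0$-smallness explicitly, whereas the paper applies the transversality extension theorem directly in its relative form (agreeing with the given section on a neighborhood of a closed set).

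However, two genuine gaps appear in the bookkeeping. First, invariant (ii) only records transversality \emph{on} each handled stratum $U_{\lambda_j}$, $j\le k$, yet the inductive step at stage $k+1$ needs to know that $S+t_k$ is already transverse \emph{over the next stratum} $U_{\lambda_{k+1}}$ near $W_k$, in order to take $A$ to be a neighborhood of $K_{k+1}\cap W_k$ inside $U_{\lambda_{k+1}}$ where the previous section is transverse. That does not follow from (i)--(iv) as stated. The missing observation — the one the paper makes explicit — is that transversality of $S+t_k$ on a negative-index stratum $U_{\lambda_j}$ near $W$ means $S+t_k$ is nowhere vanishing there, and by continuity $S+t_k$ is then nowhere vanishing on a neighborhood of that set \emph{in $U$}, hence automatically transverse over every stratum, including $U_{\lambda_{k+1}}$. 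You should strengthen your invariant to record that $S+t_k$ is nonvanishing on an ambient neighborhood of $W_k$ away from (a neighborhood of) $Z$; then the set $A$ you feed to the single-stratum lemma is guaranteed to exist. Second, you take a \emph{compact} $K_k\subset U_{\lambda_k}\cap W$ with $K_k\cup\operatorname{int}W_{k-1}$ covering $W\cap\bigsqcup_{j\le k}U_{\lambda_j}$; but $W$ is only assumed closed, and $\operatorname{int}W_{k-1}$ is built around $Z$ and lower strata, so there is in general no reason for $W\cap U_{\lambda_k}\setminus\operatorname{int}W_{k-1}$ to be compact. The Kirby--Siebenmann/Quinn theorem applies relative to closed (not merely compact) subsets of a locally compact manifold, and the paper uses it directly in that form with $W\cap U_{\lambda_k}$; you should do the same, or exhaust by compacta in a locally finite way. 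Both gaps are repairable without changing the overall structure, which otherwise coincides with the paper's argument.
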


\begin{proof}
We order the set $\Lambda$ increasingly as $\lambda_1, \ldots, \lambda_m$. Define 
\beqn
U^l:= \bigcup_{i \leq l} U_{\lambda_i}
\eeqn
which is a closed set. We prove inductively that one can find a section $t_W^l: U \to E$ such that $t_Z = t_W$ near $Z$ and $S + t_W$ is transverse over each stratum near $W \cap U^l$. For the base case, as $U_{\lambda_1}$ is a manifold, by the topological transversality extension theorem of Kirby--Siebenmann \cite{Kirby_Siebenmann} and Quinn \cite{Quinn_1982}\cite{Quinn}\cite{Freedman_Quinn}, one can find a section $t_W: U_{\lambda_1} \to E$ which coincides with $t_Z$ near $Z$ and $S + t_W$ is transverse near $W$. Then there is a neighborhood $O_Z$ of $Z$ such that $t_Z$ and $t_W$ together define a continuous section
\beqn
t_W^1: O_Z \cup U_{\lambda_1} \to E.
\eeqn
As $U$ is normal, there is a smaller neighborhood $O_Z'$ of $Z$ whose closure is contained in $O_Z$. Then by the Tietze Extension Theorem, one can extend $t_W^1|_{\ov{O_Z'} \cup U_{\lambda_1}}$ to a continuous map on $U$, still denoted by $t_W^1$. We only need to verify that $t_W^1$ is transverse on every stratum near $W \cap U_{\lambda_1}$. Indeed, as its restriction is transverse over $U_{\lambda_1}$, by the index condition \eqref{eqn41}, unless $\lambda_1$ is a top stratum (in which case our proof is already finished), it follows that $t_W^1$ is nowhere vanishing near $U_{\lambda_1}$. Hence in particular, $t_W^1$ is transverse near $W \cap U_{\lambda_1}$. 

Suppose we have constructed $t_W^l: U \to E$ satisfying the induction hypothesis. Then we can replace $Z$ by $Z \cup U^l$ and replace $W$ by $W \cup U^l$. Then $t_W^l$ is transverse over the stratum $U_{\lambda_{l+1}}$ near $Z$. One can then construct as above a new map $t_W^{l+1}$ defined near $Z$ and over $U^{l+1}$ which coincides with $t_W^l$ near $Z$ and transverse over $U_{\lambda_{l+1}}$. The Tietze Extension Theorem allows one to extend it to $U$, which is still nowhere vanishing near $U^{l+1}$. Hence the induction can be carried out. 
\end{proof}

\subsection{Multimaps and multisections}

Let $A$ be an arbitrary nonempty set and $l \geq 1$ be an integer. Denote 
\beqn
S^l(A):=  \underbrace{A \times \cdots \times A}_{l\ {\rm copies}}/ S_l
\eeqn
where the symmetry group $S_l$ acts by permutations. An element of $S^l(A)$ is denoted by 
\beqn
[a_1,\ldots, a_l].
\eeqn
For any other positive integer $k$, there is a natural map 
\beqn
m_k: S^l(A) \to S^{kl}(A),\ [a_1, \ldots, a_l] \mapsto [ \underbrace{a_1, \ldots, a_1}_{k\ {\rm copies}}, \ldots, \underbrace{a_l, \ldots, a_l}_{k\ {\rm copies}}].
\eeqn
If $U$ is another set, then an $l$-multimap from $U$ to $A$ is a map 
\beqn
f: U \to S^l(A).
\eeqn
We say that an $l_1$-multimap $f_1: U \to S^{l_1}(A)$ and an $l_2$-multimap $f_2: U \to S^{l_2}(A)$ are {\bf equivalent} if $m_{l_2} \circ f_1 = m_{l_1} \circ f_2$. This relation is obviously transitive and hence an equivalence relation. An equivalence class is simply called a {\bf multimap} from $U$ to $A$, denoted by 
\beqn
f: U \overset{m}{\to} A.
\eeqn 	
Moreover, if $A$ and $U$ are both topological spaces, then the symmetric products $S^l(A)$ inherits natural topologies from $A$ and we have the notion of continuous multimaps. 

A continuous multimap $f: U \overset{m}{\to} A$ is called {\bf liftable} if there exist $l \geq 1$ and $l$ continuous maps $f_1, \ldots, f_l: U \to A$ (called {\bf branches} of $f$) such that 
\beqn
f(x) = [f_1(x), \ldots, f_l(x)],\ \forall x\in U.
\eeqn

From now on all multimaps are assumed to be between topological spaces and are assumed to be continuous without further clarification. 

Now we consider the typical situation of multisections in a chart. Let $C = (U, E, S, \psi, F)$ be a certain abstract chart on some locally compact and Hausdorff topological space ${\mc M}$. A multisection of $E$ is a multimap $t: U \overset{m}{\to} E$ such that $\pi_E \circ t = {\rm Id}_U$. In applications, the chart is usually constructed in a specific way.

\begin{defn}
Let $\Gammait$ be a finite group. An abstract chart $C = (U, E, S, \psi, F)$ is called a {\bf $\Gammait$-quotient} of $({\bm U}, {\bm E}, {\bm S})$ if ${\bm U}$ is a locally compact and Hausdorff topological space with a $\Gammait$-action, ${\bm E}$ is a real representation ${\bm E}$ of $\Gammait$, and ${\bm S}: {\bm U} \to {\bm E}$ is a $\Gammait$-equivariant map such that (denoting $\Gammait$-orbits of an element $e$ by $\Gammait\cdot e$)
\beqn
U = {\bm U}/\Gammait,\ E = ({\bm U}\times {\bm E})/\Gammait,\ \pi_E( \Gammait \cdot ({\bm x}, {\bm e})) = \Gammait \cdot {\bm x},\ 0_E(\Gammait \cdot {\bm x}) = \Gammait\cdot ({\bm x}, 0)
\eeqn
and 
\beqn
S(\Gammait \cdot {\bm x}) = \Gammait \cdot ({\bm x}, {\bm S}({\bm x})).
\eeqn
\end{defn}

\begin{defn}\label{defn49}
Suppose the chart $C$ is the $\Gammait$-quotient of $({\bm U}, {\bm E}, {\bm S})$ and let ${\bm Q} \subset {\bm U}$ be an arbitrary $\Gammait$-invariant subset whose quotient is $Q\subset U$. A multisection $t: Q \overset{m}{\to} E$ is called  $\Gammait$-liftable if there exist $l \geq 0$ and continuous maps ${\bm t}_i: {\bm Q} \to {\bm E}$ such that the associated $l$-multimap $[{\bm t}_1, \ldots, {\bm t}_l]: {\bm Q} \to S^l({\bm E})$ is $\Gammait$-equivariant and 
\beqn
t( \Gammait\cdot {\bm x}) = \Gammait\cdot [({\bm x}, {\bm t}_1({\bm x})), \ldots, ({\bm x}, {\bm t}_l({\bm x}))].
\eeqn
The $l$-tuple $({\bm t}_1, \ldots, {\bm t}_l)$ is called a $\Gammait$-lift of $t$. 

Two $\Gammait$-liftable multisections $t, t': Q \overset{m}{\to} E$ are said to agree {\bf in the strong sense} if there exist $l\geq 1$, $\Gammait$-lifts $({\bm t}_1, \ldots, {\bm t}_l)$, $({\bm t}_1', \ldots, {\bm t}_l')$, and a permutation $\sigma \in S_l$ such that 
\beqn
{\bm t}_i = {\bm t}_{\sigma(i)}'.
\eeqn

\end{defn}

We have the following transversality extension lemma for such multisections. 

%\begin{lemma}\label{lemma49}
%Suppose the chart $C$ is a $\Gammait$-quotient of $({\bm U}, {\bm E}, {\bm S})$ where ${\bm U}$. Let $Z \subset U$ be a compact subset. Suppose $t: Z \overset{m}{\to} E$ is a $\Gammait$-liftable multisection. Then there exists a $\Gammait$-liftable extension of $t$ to a neighborhood of $Z$.
%\end{lemma}

%\begin{proof}
%Let ${\bm Z} \subset {\bm U}$ be the union of $\Gammait$-orbits in $Z$. As ${\bm U}$ is locally %compact, Hausdorff, and ${\bm Z}$ is compact, there is a $\Gammait$-invariant neighborhood of ${\bm Z}$ which is normal. Then by the Tietze Extension Theorem, there exists a continuous extension for each branch ${\bm t}_i: {\bm Z} \to {\bm E}$. The collection of such extensions may not be $\Gammait$-invairant. However, one can symmetrize them to obtain a new $\Gammait$-liftable extension while possibly increasing the number of branches. 
%\end{proof}

\begin{lemma}\label{lemma410}
Suppose the chart $C$ is the $\Gammait$-quotient of $({\bm U}, {\bm E}, {\bm S})$ and suppose that ${\bm U}$ satisfies assumptions of Proposition \ref{prop47}. Let $Z \subset W\subset U$ be closed subsets. Suppose $t$ is a continuous $\Gammait$-liftable multisection of $E$ defined near $W$ such that $S + t$ is transverse over each stratum near $Z$. Then there exists a $\Gammait$-liftable continuous multisection $t': U \overset{m}{\to} E$ such that $t$ and $t'$ agree (in the strong sense) near $Z$ and such that $S + t'$ is transverse over each stratum over $U$. Moreover, we may require that for all $\epsilon>0$ and all compact subsets $Q \subset Z$ there holds
\beqn
\| t' \|_{C^0(Q)} \leq \| t \|_{C^0(W \cap Q)} + \epsilon.
\eeqn
\end{lemma}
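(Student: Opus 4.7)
\emph{Proof plan.} The strategy is to lift the problem to the $\Gammait$-cover $\bm{U}$, apply Proposition~\ref{prop47} branch by branch (which ignores the $\Gammait$-action), and then symmetrize the resulting $l$-tuple over $\Gammait$ to restore equivariance without destroying transversality or the prescribed values near $Z$. Let $\pi: \bm{U} \to U$ denote the quotient, set $\bm{Z} = \pi^{-1}(Z)$ and $\bm{W} = \pi^{-1}(W)$ (both $\Gammait$-invariant and closed), and fix a $\Gammait$-lift $(\bm{t}_1, \ldots, \bm{t}_l)$ of $t$ on some $\Gammait$-invariant open neighborhood $\bm{N}$ of $\bm{W}$. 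By the hypothesis on $t$, each section $\bm{S} + \bm{t}_i$ is stratumwise transverse to $0_{\bm E}$ near $\bm{Z}$.

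Next, extend each branch separately. Applying Proposition~\ref{prop47} to each index $i$ (treating $\bm{t}_i$ as a partial section with transverse extension already guaranteed near $\bm{Z}$), we obtain continuous sections $\tilde{\bm{t}}_i : \bm{U} \to \bm{E}$ such that $\tilde{\bm{t}}_i = \bm{t}_i$ on some shrunken neighborhood $\bm{N}_0 \subset \bm{N}$ of $\bm{Z}$, such that $\bm{S} + \tilde{\bm{t}}_i$ is stratumwise transverse near $\bm{W}$, and with the $C^0$-estimate $\|\tilde{\bm{t}}_i\|_{C^0(\bm{Q})} \leq \|\bm{t}_i\|_{C^0(\bm{N} \cap \bm{Q})} + \tfrac{\epsilon}{|\Gammait|}$ on any prescribed compact $\bm{Q} \subset \bm{U}$. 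Since each extension is produced independently by the topological transversality theorem of Kirby--Siebenmann/Quinn, the resulting tuple $(\tilde{\bm{t}}_1, \ldots, \tilde{\bm{t}}_l)$ is in general no longer $\Gammait$-equivariant as a multimap.

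Now symmetrize by the group action. Consider the $l|\Gammait|$-tuple
\beqn
\bm{t}' \;:=\; \bigl( \gamma \cdot \tilde{\bm{t}}_i \circ \gamma^{-1} \bigr)_{(\gamma,i) \in \Gammait \times \{1,\ldots,l\}},
\eeqn
which is manifestly $\Gammait$-equivariant and therefore descends to a $\Gammait$-liftable multisection $t'$ of $E$ on $U$. Since $\bm{S}$ is $\Gammait$-equivariant and $\bm{W}$ is $\Gammait$-invariant, stratumwise transversality of $\bm{S} + \tilde{\bm{t}}_i$ near $\bm{W}$ implies that of $\bm{S} + \gamma \cdot \tilde{\bm{t}}_i \circ \gamma^{-1}$ near $\gamma \cdot \bm{W} = \bm{W}$; hence $S + t'$ is stratumwise transverse near $W$. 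Over $\bm{N}_0$ each $\tilde{\bm{t}}_i$ equals $\bm{t}_i$, and $\Gammait$-equivariance of the original lift furnishes permutations $\sigma_\gamma \in S_l$ with $\gamma \cdot \bm{t}_i \circ \gamma^{-1} = \bm{t}_{\sigma_\gamma(i)}$. Consequently, over $\bm{N}_0$ the symmetrized tuple is a reshuffling of exactly $|\Gammait|$ copies of $(\bm{t}_1, \ldots, \bm{t}_l)$, i.e. it equals $m_{|\Gammait|}(\bm{t}_1, \ldots, \bm{t}_l)$; by Definition~\ref{defn49} this is precisely the meaning of $t$ and $t'$ agreeing in the strong sense near $Z$. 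The required $C^0$-estimate then follows from the branchwise bound.

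The central obstacle is the tension between the branchwise topological transversality theorem, which has no awareness of the $\Gammait$-action, and the equivariance that a $\Gammait$-liftable multisection must respect. The resolution is the standard symmetrization device used above: extend each branch independently, then average over $\Gammait$; the key observation making this work is that symmetrization commutes with the original $\Gammait$-equivariant structure already present on $\bm{N}_0$, so it preserves agreement in the strong sense near $Z$ automatically.
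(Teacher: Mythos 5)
Your proof is correct and follows essentially the same route as the paper's: lift to the $\Gammait$-cover, extend branch by branch using Proposition~\ref{prop47}, then symmetrize over $\Gammait$ and observe that this preserves both stratumwise transversality and agreement with $t$ near $\bm Z$. The paper's proof is a four-sentence sketch; yours is the same argument fully spelled out, including the verification that the symmetrized tuple restricts to $m_{|\Gammait|}(\bm t_1,\ldots,\bm t_l)$ over $\bm N_0$, which is exactly the strong-sense agreement of Definition~\ref{defn49}. The only cosmetic remark: the $\epsilon/|\Gammait|$ in your branchwise $C^0$ bound is unnecessary — since the $C^0$-norm of a multisection is a supremum over branches and each $\gamma$-translate has the same norm as $\tilde{\bm t}_i$ (the metric on $\bm E$ being $\Gammait$-invariant), a plain $\epsilon$ already suffices.
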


\begin{proof}
Let ${\bm Z}, {\bm W} \subset {\bm U}$ be the unions of $\Gammait$-orbits in $Z$ and $W$ respectively. For each branch ${\bm t}_i$ of $t$, by Proposition \ref{prop47}, one can find a continuous map ${\bm t}_i': {\bm U} \to {\bm E}$ such that ${\bm t}_i = {\bm t}_i'$ near ${\bm Z}$ and such that ${\bm S} + {\bm t}_i'$ is transverse over each stratum. The collection of ${\bm t}_i'$ may not be $\Gammait$-equivariant, but one can symmetrize them to get an $\Gammait$-equivariant liftable multimap possibly increasing the number of branches. The symmetrization preserves transversality.
\end{proof}

\begin{rem}
Although it is possible, we refrain from stating and proving a general theorem about constructing a virtual fundamental cycle or even a virtual count in the abstract setting as it needs more assumptions. Instead, we give the VFC construction next section in the concrete situation where many conditions needed for constructing the perturbations are more natural to have. 
\end{rem}

\section{Good Coordinate Systems on Moduli Spaces and Perturbations}\label{section5}

We fix a degree $d\in H_2^K(V; {\mb Z})$. Recall that $\ov{\mc M}(V, d)$ is the moduli space of stable degree $d$ pointlike instantons in $V$ and $\ov{\mc M}_S(V, d) \subset \ov{\mc M}(V, d)$ is the subset of configurations whose evaluations at infinity lie in the (smooth) cycle $S \subset X$. $\ov{\mc M}_S(V, d)$ has virtual dimension zero. The purpose of this section is to construct a good coordinate system on $\ov{\mc M}_S(V, d)$ together with a suitable kind of perturbation and to define a virtual count.

\subsection{Moduli space of marked scaled curves and its unfoldings}

The structure of the moduli space of marked scaled curves has been studied in \cite{Mau_Woodward_2010}. Let $k \geq 1$ and let $\ov{\mc M}{}_k^1$ be the moduli space of stable marked scaled curves with $k$ (ordered) marked points. This moduli space can be viewed as a natural compactification of the configuration space of $k$ ordered distinct points of ${\mb C}$ modulo translation. By \cite[Theorem 1.2]{Mau_Woodward_2010}, $\ov{\mc M}{}_k^1$ admits the structure of a complex projective variety with toric singularities. One can see that there is a universal family 
\beq\label{eqn51}
\ov{\mc M}{}_{k+1}^1 \to \ov{\mc M}{}_k^1
\eeq
where the projection is defined by forgetting the last marked point and stabilization. 

Now we would like to give a more concrete local description of the universal family \eqref{eqn51}.  Fix a point $p \in \ov{\mc M}{}_k^1$ of a certain type $\Gamma$. For the purpose of this paper, we assume that $V_\Gamma^0 = \emptyset$ as before. We first describe deformations which fix the combinatorial form. Choose a representative ${\mc C}$ of the isomorphism class $p$. For each vertex $v \in V_\Gamma$, the component ${\mc C}_v \subset {\mc C}$ can be identified with the affine line ${\mb C}$ with special points
\beqn
(w_e)_{e \in E_v},\  w_e \in {\mb C}.
\eeqn
Here 
\beqn
E_v:= \left\{ \begin{array}{cc} \{ e \in E_\Gamma^+\ |\ v(e) = v \},\ &\ {\rm if}\ v \in V_\Gamma^1,\\
                                \{ e \in E_\Gamma\ |\ v(e) = v\},\ &\ {\rm if}\ v \in V_\Gamma^\infty\end{array}\right.,
\eeqn
i.e., the set of oriented finite or semi-infinite edges ending at $v$. Two such representations $(w_e)_{e\in E_v}$ and $(w_e')_{e\in E_v}$ are isomorphic if they differ by a translation resp. affine linear transformation if $v \in V_\Gamma^1$ resp. $v \in V_\Gamma^\infty$. Then there is a complex vector space $V_{{\rm def}, v}$ parametrizing infinitesimal deformations of the marked curve ${\mc C}_v$ with 
\beqn
{\rm dim} V_{{\rm def}, v} = \left\{ \begin{array}{cc} \# E_v -1,\ &\ {\rm if}\ v \in V_\Gamma^1,\\
                                                      \# E_v -2,\ &\ {\rm if}\ v \in V_\Gamma^\infty.\end{array}\right.
\eeqn
Let $V_{{\rm def}, v}^\epsilon \subset V_{{\rm def},v}$ be a sufficiently small neighborhood of the origin. Then one can choose a smooth function  
\beq\label{eqn52}
V_{{\rm def}, v}^\epsilon \to {\mb C}^{E_v},\ \eta \mapsto (w_{e, \eta})_{e\in E_v}
\eeq
such that $w_{e, 0} = w_e$ and the induced map from $V_{{\rm def}, v}^\epsilon$ to the domain moduli ${\mc M}^{{\mf s}(v)}_{\# E_v}$ is a homeomorphism onto an open neighborhood of $[{\mc C}_v]$.\footnote{When ${\mf s}(v) = \infty$, ${\mc M}_k^\infty$ is the moduli space of configurations of $k$ distinct points in ${\mb C}$ modulo affine linear transformation, or equivalently, the main stratum of the moduli space of genus zero stable curves with $k+1$ marked points.}

Now we describe deformations which resolve the nodes. For each node (edge) $e \in E_\Gamma$ one can define a space of gluing parameters $\Delta_e \cong {\mb C}$. The gluing parameters are not independent, but should satisfy a ``balanced condition.''
\begin{defn}
Let $\Gamma$ be a stable combinatorial type of marked scaled curves satisfying $V_\Gamma^0 = \emptyset$. A tuple of gluing parameters $(\zeta_e) \in \prod_{e\in E_\Gamma} \Delta_e$ is said to satisfy the {\bf balanced condition} if for any two paths 
\beqn
v_1 \overset{e_1}{\succ} v_2 \overset{e_2}{\succ} \cdots \overset{e_{k-1}}{\succ} v_k,\ v_1' \overset{e_1'}{\succ} v_2' \overset{e_2'}{\succ} \cdots \overset{e_{l-1}'}{\succ} v_l',\ v_k = v_l' \in V_\Gamma^\infty,\ v_1, v_1' \in V_\Gamma^1
\eeqn
there holds
\beqn
\zeta_{e_1} \cdots \zeta_{e_{k-1}} = \zeta_{e_1'} \cdots \zeta_{e_{l-1}'} \in {\mb C}.
\eeqn
\end{defn}

Define
\beqn
V_{\rm res}:= V_{{\rm res}, p}:= \Big\{ ( \zeta_e)_{e\in E_\Gamma}\ |\ (\zeta_e)\ {\rm satisfies\ the\ balanced\ condition}\Big\}
\eeqn
which is an affine variety. 

Now one can explicitly construct a local universal family. The general case can be derived from the case when all gluing parameters are nonzero. We fix the point $p\in \ov{\mc M}_k^1$, the representative ${\mc C}$, and the family of functions $w_{e, \eta}$ (see \eqref{eqn52}) as above.  For each semi-infinite edge $e_i$, suppose the path connecting $e_i$ to the root vertex is 
\beqn
v_1 \overset{e_1}{\succ} v_2 \overset{e_2}{\succ} \cdots \overset{e_l}{\succ} v_\infty.
\eeqn
Then for any small deformation parameter $\eta$ and small resolution parameter $\zeta$, define
\beq\label{eqn53}
w_{i, \eta, \zeta} = w_{i, \eta} + \frac{1}{\zeta_{e_1}} w_{e_1, \eta} + \frac{1}{\zeta_{e_1} \zeta_{e_2}} w_{e_2, \eta} + \cdots + \frac{1}{\zeta_{e_1} \cdots \zeta_{e_l}} w_{e_l, \eta}.
\eeq
Let ${\mc C}_{\eta, \zeta}$ be the curve ${\mc C}$ with the above list of marked points. This defines a map 
\beq\label{eqn54}
\begin{array}{ccccc}
\displaystyle V_{\rm def}^\epsilon \times V_{\rm res}^\epsilon & \to & \displaystyle \bigsqcup_{\Gamma \preceq \Pi} \prod_{v\in V_\Pi} {\mb C}^{E_v} & \to & \ov{\mc M}_k^1,\\
(\eta, \zeta) & \mapsto & {\bm w}_{\eta, \zeta} = (w_{i, \eta, \zeta})_{1\leq i \leq k} & \mapsto & [{\mc C}_{\eta, \zeta}].
\end{array}
\eeq
Then there holds the following fact.
\begin{lemma}(cf. \cite{Mau_Woodward_2010})
For each stable scaled marked curve ${\mc C}$, for all small $\epsilon>0$, the map \eqref{eqn54} is a homeomorphism between $V_{\rm def}^\epsilon \times V_{\rm res}^\epsilon$ and an open neighborhood of $[{\bm w}]$ in $\ov{\mc M}_k$.
\end{lemma}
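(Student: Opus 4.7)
The plan is to verify the lemma by combining continuity, injectivity, and a dimension count that, together with invariance of domain, yields openness.

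First, the map $(\eta,\zeta)\mapsto[{\bm w}_{\eta,\zeta}]$ is real analytic on the locus where all $\zeta_e\ne 0$ by the explicit formula \eqref{eqn53}; continuity at parameters with some $\zeta_e=0$ amounts to the claim that as $\zeta_{e}\to 0$ along a coordinate direction, the smooth marked curves $\mc C_{\eta,\zeta}$ Gromov-converge to the nodal curve obtained by un-resolving those edges. This is the standard bubbling picture: markings in the branch above an edge $e$ diverge at rate $(\zeta_{e_1}\cdots\zeta_{e})^{-1}$, and in the rescaled coordinate they approach the special points on the sub-tree, matching the prescribed combinatorial degeneration.

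Second, I would establish injectivity by reconstructing $(\eta,\zeta)$ from a nearby configuration $[{\bm w}']$. The combinatorial type $\Pi$ of the target satisfies $\Gamma\preceq\Pi$ and determines which edges are resolved, namely $\{e:\zeta_e\ne 0\}$. For a resolved edge $e$, the modulus $|\zeta_e|$ is the cluster-separation scale across $e$, and the phase is the angular alignment of the cluster; the compatibility of these rates along different paths through the tree is precisely the balanced condition, so the inverse assignment lands in $V_{\rm res}^\epsilon$. The local coordinate $\eta_v$ is then recovered from the remaining configuration of special points on $\Sigma_v$, modulo translations or affine linear transformations according to ${\mf s}(v)$, via the inverse of \eqref{eqn52}.

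Third, a straightforward combinatorial count on the tree $\Gamma$ shows $\dim_{\mb C}(V_{\rm def}\times V_{\rm res})=k-1=\dim_{\mb C}\ov{\mc M}{}_k^1$. On the open stratum with all $\zeta_e\ne 0$ the map is a continuous injection between open subsets of topological manifolds of equal dimension, hence an open embedding by invariance of domain. Combined with the stratum-by-stratum continuity and injectivity, this shows that the image is an open neighborhood of $[{\bm w}]$ and that the map is a homeomorphism onto it.

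The main obstacle, and the reason for appealing to \cite{Mau_Woodward_2010}, is the bookkeeping across strata: one must check that every sequence of stable scaled marked curves Gromov-converging to $\mc C$ arises from parameters $(\eta^{(n)},\zeta^{(n)})\to(0,0)$ of the prescribed form, and that the degeneration direction exactly matches the combinatorial degeneration of the limit. With the toric description of $\ov{\mc M}{}_k^1$ from loc.\ cit., the balanced conditions are the standard defining equations of the affine toric chart at the boundary point, and \eqref{eqn53} provides the corresponding gluing coordinates, so the verification reduces to matching normalizations.
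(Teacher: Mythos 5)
The paper does not actually prove this lemma; it is stated with the citation ``cf.~\cite{Mau_Woodward_2010}'' and no argument is supplied, so there is no in-paper proof to compare against. Your sketch is a reasonable outline of how such a proof would go, and you correctly identify that the substantive content lives in the Gromov-convergence / toric bookkeeping which Mau--Woodward carry out. However, there is a genuine gap in the way you try to get openness.

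The step ``on the open stratum the map is an open embedding by invariance of domain; combined with stratum-by-stratum continuity and injectivity, the image is an open neighborhood of $[{\bm w}]$'' does not follow. Invariance of domain on the top stratum only gives you that the image of the top stratum is open inside the top stratum of $\ov{\mc M}{}_k^1$. The center $[{\bm w}]$ lies in a deeper stratum (since $\Gamma$ has edges), and nothing in the argument so far prevents the image from being, say, a proper closed wedge through $[{\bm w}]$: a continuous injection whose restriction to each stratum is an open map into that stratum need not have open image in the ambient stratified space when the strata have positive codimension. Moreover $V_{\rm res}$ is an affine variety with toric singularities, not a manifold, so invariance of domain is not available off the open stratum. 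The fact you actually need is surjectivity onto a neighborhood of $[{\bm w}]$: every sequence of stable scaled curves Gromov-converging to $[{\bm w}]$ is eventually realized by parameters $(\eta^{(n)},\zeta^{(n)})\to (0,0)$ of the prescribed form, with the balanced conditions automatically satisfied. You flag this as ``the main obstacle'' but never close it, deferring instead to the toric description of \cite{Mau_Woodward_2010}. That deferral is acceptable given the paper also does so, but then the invariance-of-domain detour is a red herring and should be removed: the honest argument is (i) continuity and injectivity for small $\epsilon$, (ii) surjectivity onto a neighborhood of $[{\bm w}]$ via the convergence argument, and (iii) the compact-to-Hausdorff fact to upgrade the continuous bijection to a homeomorphism after a further precompact shrinking.
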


One can make this family of marked curves ${\mc C}_{\eta, \zeta}$ into a local universal family. Define 
\beqn
{\mc U}:= \bigsqcup_{(\eta, \zeta)} {\mc C}_{\eta, \zeta}.
\eeqn
In fact one can define a canonical topology on ${\mc U}$ and identify ${\mc U}$ as an open subset of $\ov{\mc M}_{k+1}^1$ so that one has the commutative diagram
\beqn
\vcenter{ \xymatrix{ {\mc U} \ar[r]  \ar[d] & \ov{\mc M}_{k+1}^1 \ar[d]\\
           {\mc V} \ar[r]   & \ov{\mc M}_k^1 } }.
\eeqn
and so that the natural projection $\pi: {\mc U} \to {\mc V}$ is continuous and the sections of marked points $w_i: {\mc V} \to {\mc U}$ are continuous. There is also a closed subset ${\mc U}^* \subset {\mc U}$ corresponding to markings or nodes. This gives us a tuple 
\beqn
({\mc U}, {\mc V}, 0, \pi, w_1,\ldots, w_k)
\eeqn
which we call a {\bf local universal unfolding} of the marked stable scaled curve ${\mc C}$. Notice that this family only depends on the choices of the neighborhoods $V_{\rm def}^\epsilon \subset V_{\rm def}$ and $V_{\rm res}^\epsilon \subset V_{\rm res}$ and the family of markings \eqref{eqn52}. 

\subsubsection*{Unordered marked points}

For the paper of the construction, we also need to consider the case when the markings are unordered. In this situation the marked curve may have nontrivial automorphisms which permutes the markings. Indeed, let ${\mc C}$ be a marked stable scaled curve where the markings are unordered. Let $\Gammait$ be the (finite) automorphism group. Choose an arbitrary order of the markings, which gives a marked stable scaled curve $\hat {\mc C}$. Then $\Gammait$ permutes the ordered marked points. One still have the space of deformation parameters $V_{\rm def}$ and the space of resolution parameters $V_{\rm res}$. The group $\Gammait$ acts linearly on $V_{\rm def}$ and $V_{\rm res}$. We can take the neighborhoods of origins $V_{\rm def}^\epsilon$ and $V_{\rm res}^\epsilon$ to be $\Gammait$-invariant. Then the total space of the local universal family ${\mc U} \to {\mc V}:= V_{\rm def}^\epsilon \times V_{\rm res}^\epsilon$ also has the natural $\Gammait$-action whose restrictions to a fibre ${\mc C}_{\eta, \zeta}$ is an isomorphism
\beqn
\gamma: {\mc C}_{\eta, \zeta} \to {\mc C}_{\gamma \eta, \gamma \zeta}
\eeqn 
which permutes the markings $(w_{i, \eta, \zeta})_{1 \leq i \leq k}$. In other words, the sections $w_1, \ldots, w_k: {\mc V}\to {\mc U}$ given by the formula \eqref{eqn53} defines a $\Gammait$-invariant multivalued map 
\beqn
(\eta, \zeta) \mapsto [ w_{1, \eta, \zeta}, \ldots, w_{k, \eta, \zeta}].
\eeqn

We will talk about functions and sections over the total space ${\mc U}$. It is easy to see from the construction that the complement of the set ${\mc U}^*$ of markings and nodes is a smooth manifold and the smooth part of each fibre has a well-defined complex structure. Then we define
\beqn
C^\infty ({\mc U}) = \Big\{ f \in C^0( {\mc U})\ |\ f|_{{\mc U}^*} {\rm is\ smooth} \Big\}.
\eeqn
There is also the bundle  
\beqn
\Lambda^{0,1}_{{\mc U}/ {\mc V}} \to {\mc U}^*
\eeqn
whose fibre at a point $p \in {\mc U}^*$ is the $\Lambda^{0,1}$-space in the fibre direction. For smooth bundles $E \to {\mc U}^*$ such as $\Lambda^{0,1}_{{\mc U}/ {\mc V}}$ which is only defined over ${\mc U}^*$, we can define the space 
\beqn
C^\infty_c({\mc U}, E)
\eeqn
which is the space of smooth sections whose fibrewise supports are compact and disjoint from ${\mc U}^*$.

Lastly we name a concept which will be useful in the construction of charts.

\begin{defn}\label{defn53}
Let ${\mc C}$ be a marked stable scaled curve with unordered markings. A {\bf resolution datum} consists of 
\begin{enumerate}

\item An ordering of the marked points making an marked stable scaled curve $\hat{\mc C}$. Then one has the space of deformation parameters $V_{\rm def}$ and the space of gluing parameters $V_{\rm res}$. The automorphism group $\Gammait$ acts on both of them. 

\item An unfolding ${\mc U} \to {\mc V}:=V_{\rm def}^\epsilon \times V_{\rm res}^\epsilon$.

\item A $\Gammait$-invariant precompact open subset $U \subset {\mc C}$ disjoint from special points and a smooth $\Gammait$-equivariant trivialization 
\beq\label{eqn55}
U \times {\mc V} \hookrightarrow {\mc U} \setminus {\mc U}^*
\eeq
of the complement of a small neighborhood of ${\mc U}^*$. 
\end{enumerate}
\end{defn}

From our construction one can see that resolution data exist.

\subsection{Stabilizing the domain}\label{subsection52}

Let ${\mc C}$ be a scaled curve of domain type $\Gamma$ without markings. Assume $V_\Gamma^0 = \emptyset$. This is a typical domain for a stable pointlike instanton. In order to construct local charts, the usual procedure is to stabilize the domain. We can do it as follows. For each $v \in V_\Gamma^1$, we add a marking to ${\mc C}_v$ so that this domain is stable. For each $v \in V_\Gamma^\infty$ which is unstable (meaning that it has less than three special points). Then we need to add one or more special points to stabilize this component. Suppose we need to add two. Then to the tree $\Gamma$, we add two new vertices $v', v''  \in V_{\Gamma'}^1$ to obtain a new scaled tree $\Gamma'$, and add two tails $e', e''$ attached to $v'$ and $v''$ respectively. The extra vertices $v'$ and $v''$ are necessary because we require that tails (markings) must be attached to vertices corresponding to pointlike instantons but not holomorphic spheres. To the domain ${\mc C}$, we add two new extra components ${\mc C}_{v'}$ and ${\mc C}_{v''}$ together with two new markings $w_{e'} \in {\mc C}_{v'}$ and $w_{e''}\in {\mc C}_{v''}$. This stabilizing procedure is illustrated in Figure \ref{figure1}.

\begin{center}
\begin{figure}[h]
\label{figure1}
\includegraphics[scale=1]{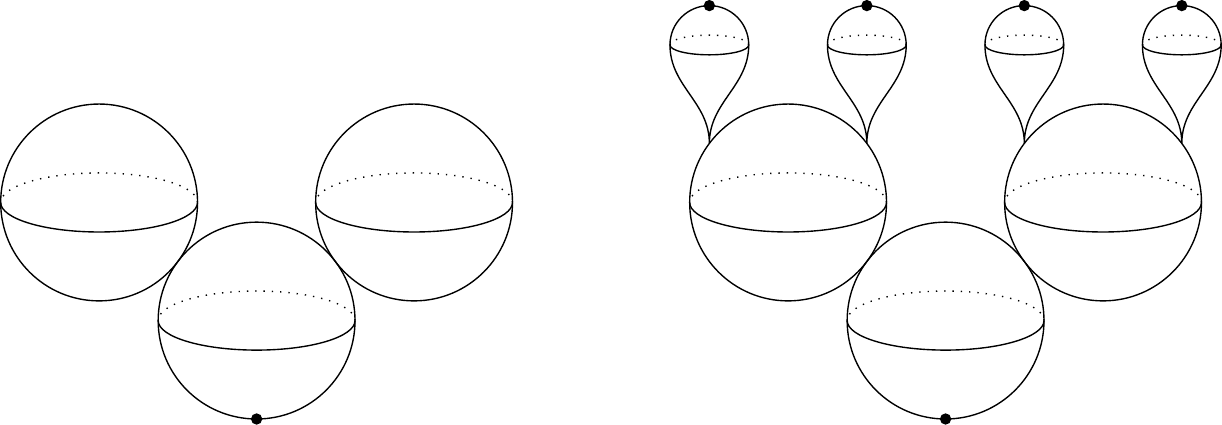}
\caption{A stable holomorphic sphere with three components is stabilized by adding four markings and four ghost components.}
\end{figure}
\end{center}

We see that this stabilizing operation can be made respect the automorphism group. Indeed, let $\Gammait$ be the finite automorphism group of the stable pointlike instanton $({\mc C}, {\bm y})$. Then one can stabilize ${\mc C}$ to a new domain curve ${\mc C}'$ such that $\Gammait$ acts on ${\mc C}'$ making ${\bm y}$ invariant.

\subsection{Thickened moduli spaces}

To regularize the moduli space $\ov{\mc M}_S(V, d)$ one needs to thickened it locally by adding obstructions. 

\begin{defn}\label{defn54}
For $p \in {\mc M}_S(V, d)_\lambda \subset \ov{\mc M}_S(V, d)$, a {\bf thickening datum} $\alpha$ centered at $p$ is a tuple 
\beqn
\alpha = \left( {\mc C}_\alpha, {\bm u}_\alpha, {\bm y}_\alpha, {\bm r}_\alpha, {\bm E}_\alpha, \iota_\alpha, {\bm H}_\alpha \right)
\eeqn 
where 
\begin{enumerate}
\item ${\mc C}_\alpha$ is a smooth or nodal scaled curve whose domain type is determined by the map type $\lambda$.

\item ${\bm u}_\alpha$ is a smooth scaled pointlike instanton over ${\mc C}_\alpha$ such that $({\mc C}_\alpha, {\bm u}_\alpha)$ represents $p$ (after collapsing unstable components).

\item ${\bm y}_\alpha$ is an unordered list of points on ${\mc C}_\alpha$ that makes $({\mc C}_\alpha, {\bm y}_\alpha)$ stable such that if $\Gammait_\alpha$ is the automorphism group of $({\mc C}_\alpha, {\bm u}_\alpha)$, then the image of the natural map $\Gammait_\alpha \to {\rm Aut}({\mc C}_\alpha)$ is contained in ${\rm Aut}({\mc C}_\alpha, {\bm y}_\alpha)$. 

\item ${\bm r}_\alpha$ is a resolution datum for $({\mc C}_\alpha, {\bm y}_\alpha)$, which includes an explicit universal unfolding 
\beqn
\pi_\alpha: {\mc U}_\alpha \to {\mc V}_\alpha.
\eeqn
Notice that the automorphism group $\Gammait_\alpha$ acts on the unfolding via the map $\Gammait_\alpha \to {\rm Aut}({\mc C}_\alpha, {\bm y}_\alpha)$. 

\item ${\bm E}_\alpha$ is a finite-dimensional real representation of $\Gammait_\alpha$.

\item $\iota_\alpha$ is a $\Gammait_\alpha$-equivariant linear map 
\beqn
\iota_\alpha: {\bm E}_\alpha \to C_c^\infty({\mc U}_\alpha \times V, \Lambda^{0,1}_{{\mc U}_\alpha/{\mc V}_\alpha}\otimes TV)^K
\eeqn
such that the restriction of $\iota_\alpha({\bm E}_\alpha)$ to the central fibre of ${\mc U}_\alpha \to {\mc V}_\alpha$ is transverse to the image of $D_\alpha$ and its restriction to each fibre are generated by sections with supports contained in $U_\alpha$, where $U_\alpha$ is part of the information included in the resolution datum ${\bm r}_\alpha$ (see Definition \ref{defn53} and \eqref{eqn55}).

\item ${\bm H}_\alpha \subset V$ is a collection of smooth embedded codimension two submanifold (not necessarily $K$-invariant) whose members $H_{\alpha, y}$ are indexed by $y \in {\bm y}_\alpha$, satisfying the following conditions: 1) if $y\in {\bm y}_\alpha$ is on a nonconstant instanton component $\Sigma_v$, then the map $u_v: \Sigma_v \to V$ intersects $H_{\alpha, y}$ transversely at $y$; 2) $y\in {\bm y}_\alpha$ is on a ghost instanton component attached to a nonconstant holomorphic sphere component $u_v: \Sigma_v \cong S^2 \to X$ at $y'\in \Sigma_v$, then $H_{\alpha, y}$ is $K$-invariant and intersects the level set $\mu^{-1}(0)$ transversely (hence descends to an embedded codimension two submanifold $\ov{H}_{\alpha, y}\subset X$, and the holomorphic sphere $u_v$ intersects $\ov{H}_{\alpha, y}$ transversely at $y$.
\end{enumerate}
\end{defn}

Now we define the notion of thickened solutions and thickened moduli spaces. To define the normalization condition given by the not-necessarily $K$-invariant hypersurfaces $H_\alpha$, one needs to specify gauge slices. Remember that the $K$-action in the semi-stable locus $V^{\rm ss} \subset V$ is free. Then for each $x \in V^{\rm ss}$, the subspace  
\beqn
T_x Kx:= \{ {\mc X}_a(x)\ |\ a \in {\mf k}\} \subset T_x V
\eeqn
has dimension equal to ${\rm dim} K$. Then for $\delta>0$ sufficiently small, 
\beqn
W_x^\delta:= \exp_x \left( \{ v \in (T_x Kx)^\bot\ |\ \| v \| < \delta \} \right) \subset V^{\rm ss}
\eeqn
is an embedded submanifold which is a local slice of the $K$-action through the point $x$. We call  $W_x^\delta$ the {\bf standard $K$-slice} through $x$. 

\begin{defn}\label{defn55}
Let $I$ be a finite set of thickening datum. An {\bf $I$-thickened solution} is a quadruple
\beqn
\Big( {\mc C}, {\bm u}, ({\bm y}_\alpha)_{\alpha \in I}, (\phi_\alpha)_{\alpha \in I}, (e_\alpha)_{\alpha \in I} \Big)
\eeqn
where 
\begin{enumerate}

\item ${\mc C}$ is a smooth or nodal scaled curve and for each $\alpha \in I$, ${\bm y}_\alpha$ is an unordered list of marked points. We require that for each $\alpha \in I$, $({\mc C}, {\bm y}_\alpha)$ is stable.

\item ${\bm u} = (u_v)_{v\in V_\Gamma}$ is a smooth gauged map (not a gauge equivalence class) on ${\mc C}$ such that for each $y \in \cup_{\alpha \in I} {\bm y}_\alpha$, ${\bm u} (y)$ is contained in the semi-stable locus of $V$. 

\item For each $\alpha$, $\phi_\alpha: {\mc C} \to {\mc C}_{\phi_\alpha} \cong {\mc C}_{\alpha, \eta, \zeta}$ is an isomorphism for some fibre ${\mc C}_{\phi_\alpha} \subset {\mc U}_\alpha$ such that $\phi_\alpha({\bm y}_\alpha) = {\bm y}_{\alpha, \eta, \zeta} = {\bm y}_{\phi_\alpha}$. For each $y \in {\bm y}_\alpha$, denote $y_{\phi_\alpha}\in {\mc C}_{\phi_\alpha}$ the corresponding point and $H_{y_{\phi_\alpha}} \subset {\bm H}_\alpha$ the corresponding component through which $u_\alpha$ passes at $y_{\phi_\alpha}$.  

\item $e_\alpha \in {\bm E}_\alpha$ is a vector and denote by 
\beqn
Z_\alpha:= {\rm supp} \left( \iota_\alpha(e_\alpha) \circ \phi_\alpha \right).
\eeqn
Then ${\bm u}(Z_\alpha)$ is contained in the semi-stable locus $V^{\rm ss}\subset V$. 

\end{enumerate}
These data also need to satisfy the following additional conditions. 

\begin{enumerate}
\item For each $y \in {\bm y}_\alpha$, over a small disk $B_r(y)$ there exists (must be unique) gauge transformation $g$ such that for each $z \in B_r(y) \subset {\mc C}$ such that 
\beqn
g(z)^{-1} u(z) \in W_{u_\alpha(\phi_\alpha(z))}^\delta
\eeqn
for some $\delta>0$. Moreover, $g^{-1} u$ intersect $H_{y_{\phi_\alpha}}$ transversely at $y$. 

\item We require that the quadruple must satisfy the $I$-perturbed gauged Witten equation, i.e.,
\beqn
{\mc F}_{\mc C}({\bm u}) + \sum_{\alpha \in I} \iota_\alpha(e_\alpha)\circ \phi_\alpha = 0.
\eeqn
\end{enumerate}

Two $I$-thickened solutions $({\mc C}, {\bm u}, ({\bm y}_\alpha), (\phi_\alpha), (e_\alpha))$ and $({\mc C}', {\bm u}', ({\bm y}_\alpha'), (\phi_\alpha'), (e_\alpha'))$ are {\it isomorphic} if 
\begin{enumerate}

\item for each $\alpha \in I$, $e_\alpha = e_\alpha'$;

\item there exists a domain isomorphism $\rho: {\mc C} \to {\mc C}'$ such that for all $\alpha \in I$, the following diagram commutes
\beqn
 \xymatrix{  {\mc C} \ar[r]^{\phi_\alpha} \ar@/^2pc/[rr]^{\rho} & {\mc U}_\alpha        &  {\mc C}' \ar[l]_{\phi_\alpha'} }
\eeqn

\item there is a gauge transformation over the trivial bundle over ${\mc C}$ such that 
\beqn
{\bm u} = g^* \rho^* {\bm u}'.
\eeqn
\end{enumerate}
Here ends the definition.
\end{defn}

For any finite set $I$ of thickening data, define ${\mc M}_I$ to be the set of isomorphism classes of $I$-thickened solutions. One can define a Gromov-type topology on ${\mc M}_I$ which is locally compact and Hausdorff. Moreover, ${\mc M}_I$ is naturally stratified by combinatorial types as 
\beqn
{\mc M}_I = \bigsqcup_{\lambda \in \Lambda} {\mc M}_{I, \lambda}.
\eeqn
We would like to have nice topological properties of ${\mc M}_I$, such as metrizability, to make further argument. However, it is difficult to prove it directly (cf. \cite[Second Erratum]{McDuff_Salamon_2004}). Instead, we can easily verify a weaker condition.

\begin{lemma}\label{lemma56}
Each stratum ${\mc M}_{I, \lambda} \subset {\mc M}_I$ is metrizable.
\end{lemma}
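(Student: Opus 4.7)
My plan is to construct, near each point of $\mc M_{I,\lambda}$, a chart modeled on the quotient of an open subset of a separable Banach manifold by a finite group, and then patch the local chart metrics into a global one. The key simplification on a single stratum is that the combinatorial type $\lambda$ is frozen, so neither bubbling nor node formation occurs: all nearby $I$-thickened solutions live on domains of fixed topological type, whose deformation space is a single smooth stratum $N_p$ inside $\prod_\alpha \mc V_\alpha$ (cut out by the vanishing of the gluing parameters at the nodes that remain unresolved in type $\lambda$).

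Concretely, fix $p = [(\mc C, \bm u, (\bm y_\alpha), (\phi_\alpha), (e_\alpha))]$ with finite automorphism group $\Gammait_p$. Choose Sobolev exponents with $kq > 2$ and form the Banach bundle over $N_p \times \bigoplus_\alpha \bm E_\alpha$ whose fibre over $(\eta, \zeta, (e_\alpha'))$ consists of $W^{k,q}$-gauged maps on $\mc C_{\eta,\zeta}$ close to $\bm u$ that satisfy the gauge-slice condition of Definition \ref{defn55}(1) at the markings through the hypersurfaces $\bm H_\alpha$. The group $\Gammait_p$ acts smoothly on this Banach bundle; the solution locus $\mc Z_p$ of the $I$-perturbed gauged Witten equation is a closed $\Gammait_p$-invariant subset, hence separable and metrizable as a subspace of a separable Banach manifold. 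After averaging the chart metric over $\Gammait_p$, the formula $d([x],[y]) := \min_{\gamma \in \Gammait_p} d(x, \gamma \cdot y)$ turns $\mc Z_p / \Gammait_p$ into a separable metric space.

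The crux of the proof is to show that the tautological map $\mc Z_p / \Gammait_p \to \mc M_{I,\lambda}$ is a homeomorphism onto an open neighborhood $F_p$ of $p$. Continuity is immediate from Definition \ref{defn37} restricted to a fixed combinatorial type, where Gromov convergence reduces to $C^\infty_{\rm loc}$ convergence of gauged maps on a domain of fixed topological type, modulo a sequence of gauge transformations and translations. Injectivity modulo $\Gammait_p$ and openness, which I expect to be the main obstacle, require an implicit function theorem argument: for any $I$-thickened solution sufficiently close to $p$, the slice conditions through the $\bm H_\alpha$ must cut out a locally unique representative up to the finite ambiguity $\Gammait_p$. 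The relevant transversality is furnished by Definition \ref{defn54}(6) on instanton components and by Lemma \ref{lemma211} on holomorphic sphere components in the adiabatic limit, guaranteeing that the linearization of the slice system is surjective so that the ambient infinite-dimensional gauge symmetry is rigidified to $\Gammait_p$.

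To globalize, cover $\mc M_{I,\lambda}$ by charts $\{F_p\}$; each is separable and metrizable. Since the ambient $\mc M_I$ is locally compact, Hausdorff, and first countable, and each $F_p$ is Lindel\"of, one can extract a countable subcover, so that $\mc M_{I,\lambda}$ is second countable, regular, and Hausdorff. Urysohn's metrization theorem then yields a global metric. Alternatively one may patch the local metrics $d_{\Gammait_p}$ directly via a partition of unity subordinate to a countable locally finite refinement of $\{F_p\}$. The analytic substance of the argument is thus concentrated in the implicit function step of the previous paragraph; the remainder is formal topology.
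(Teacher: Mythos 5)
Your proof takes a genuinely different route from the paper. The paper's argument is a one-line global observation: since a fixed combinatorial type $\lambda$ freezes the domain deformation space to (a neighborhood in) the Euclidean facet $V_{{\rm res},\lambda}$ and the remaining data lives in a Sobolev (Banach) space of gauged maps together with the finite-dimensional $\bm E_I$, the slice conditions pick out canonical representatives, so that ${\mc M}_{I,\lambda}$ is homeomorphic to a subset of a separable Banach space and is therefore metrizable directly. Your version instead builds local Banach-slice charts around each $p$ and then patches via Urysohn's metrization theorem. The local construction you give is more explicit than the paper's terse statement and spells out exactly where the analysis lives (the local homeomorphism and the slice rigidification, matching the paper's implicit appeal to Definitions~\ref{defn54},~\ref{defn55} and Lemma~\ref{lemma211}), which is genuine added content.

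However, your globalization step contains a gap. You write that ``since the ambient ${\mc M}_I$ is locally compact, Hausdorff, and first countable, and each $F_p$ is Lindel\"of, one can extract a countable subcover.'' This inference does not hold: a space covered by Lindel\"of (or even second countable) open sets need not be Lindel\"of --- an uncountable discrete space is the standard counterexample. Lemma~\ref{lemma511} only gives first countability and separability of the thickened moduli space, and separability alone does not descend to subspaces in a non-metrizable setting, so you cannot conclude that ${\mc M}_{I,\lambda}$ admits a countable subcover without an additional input. One way to close the gap is to use $\sigma$-compactness of the stratum (e.g.\ from properness of $S_I$ over the finite-dimensional $\bm E_I$, together with Gromov compactness for bounded-degree, bounded-obstruction solutions of the fixed type), since for locally compact Hausdorff spaces $\sigma$-compactness implies Lindel\"of and then your Urysohn argument goes through. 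Alternatively, one can avoid Urysohn and all discussion of countability entirely by doing what the paper does: show your local slices globally embed ${\mc M}_{I,\lambda}$ into a single Banach space; metrizability of a subset of a metric space is then automatic. A minor further remark: the finite group $\Gammait_p$ you quotient by is actually trivial for $I$-thickened solutions (the data $(\phi_\alpha)$ rigidifies domain isomorphisms entirely, and free $K$-action on $V^{\rm ss}$ kills the residual gauge), so that part of the construction is a harmless redundancy rather than a gap --- but it suggests you may be conflating ${\mc M}_I$ with its $\Gammait_I$-quotient $U_I$, and the lemma is stated for the former.
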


\begin{proof}
Notice that each facet $V_{{\rm res}, \lambda}$ of the space of gluing parameters is diffeomorphic to an open subset of a Euclidean space. Hence each ${\mc M}_{I, \lambda}$ is a subset of a certain Banach space, hence is metrizable.
\end{proof}

\begin{rem}\label{rem57}
Using the same argument one can see that each stratum of the original moduli space $\ov{\mc M}_S(V, d)$ is also metrizable. 
\end{rem}

\begin{cor}\label{cor58}
For each compact subset $Z \subset {\mc M}_I$, there exists a sequence of shrinking neighborhoods of $Z$, i.e., a sequence of open sets $W_n \subset {\mc M}_I$ containing $Z$ such that
\beqn
Z = \bigcap_{n \geq 0} W_n.
\eeqn
\end{cor}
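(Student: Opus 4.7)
The plan is to show $Z$ is a $G_\delta$-subset of ${\mc M}_I$, that is, a countable intersection of open sets; since $Z$ is compact in a Hausdorff space, it is automatically closed, so a $G_\delta$-representation $Z=\bigcap_n W_n$ can be arranged to satisfy $W_{n+1}\subset W_n$ by replacing $W_n$ with $\bigcap_{m\le n}W_m$.

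The key step is to upgrade Lemma~\ref{lemma56} from stratumwise to local metrizability: around each $p\in Z$, I would exhibit an open neighborhood $U\subset {\mc M}_I$ that is metrizable. This should follow from the finite-dimensional reduction underlying an $I$-thickened chart: a Gromov neighborhood of $p$ is identified with a subset of the product of the parameter space ${\mc V}_\alpha = V_{\rm def}^\epsilon \times V_{\rm res}^\epsilon$ of a local unfolding and the Banach space of gauge-slice-normalized gauged maps appearing in Definition~\ref{defn55}; the induced topology on $U$ is metrizable because it embeds into a metric space. Given local metrizability, I would cover $Z$ by finitely many such open sets $U_1,\ldots,U_k$; in each $U_i$, the closed subset $Z\cap U_i$ is $G_\delta$, yielding decreasing open sets $W_n^i\subset U_i$ with $\bigcap_n W_n^i=Z\cap U_i$. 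Setting $W_n:=\bigcup_{i=1}^k W_n^i$ produces open neighborhoods of $Z$ in ${\mc M}_I$, and a short pigeonhole argument shows $\bigcap_n W_n=Z$: any $p\in\bigcap_n W_n$ lies in $W_n^{i(n)}$ for each $n$, some $i$ is hit infinitely often, and by monotonicity of $(W_n^i)_n$ this forces $p\in \bigcap_n W_n^i \subset Z$.

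The main obstacle is the local metrizability step, since a Gromov neighborhood of $p$ meets multiple strata at once and the topology must simultaneously track the $(\eta,\zeta)$ deformation/resolution parameters, the gauged-map data, and the gauge slices through the normalization submanifolds ${\bm H}_\alpha$. I expect this to be handled by essentially the same direct argument as in Lemma~\ref{lemma56} (identifying the chart with a subset of a Banach space), but now applied to the full neighborhood rather than to just one stratum; once this is in place, the remainder of the corollary is elementary set theory.
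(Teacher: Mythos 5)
Your overall logical skeleton (reduce to a $G_\delta$ statement, localize, cover $Z$ by finitely many nice opens, use the pigeonhole argument to reassemble) is sound, and the pigeonhole step at the end is correct. But the load-bearing claim — that a full Gromov neighborhood of $p$ in ${\mc M}_I$, \emph{not just one stratum}, is metrizable because it embeds into ${\mc V}_\alpha \times (\text{a fixed Banach space of gauged maps})$ — is exactly what the paper deliberately declines to prove, and the reason is structural, not incidental. To embed a whole neighborhood spanning several strata into a single Banach space you must compare maps defined on the varying domains ${\mc C}_{\eta,\zeta}$ with maps on the central fiber ${\mc C}_\alpha$, including near the nodes that get resolved. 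The trivialization \eqref{eqn55} available in a resolution datum only lives away from ${\mc U}^*$, so it lets you compare maps on the complement of neighborhoods of the nodes but not globally; closing this gap is in substance a gluing construction. This is the precise difficulty flagged in the text right before Lemma~\ref{lemma56} (``it is difficult to prove it directly (cf.\ \cite[Second Erratum]{McDuff_Salamon_2004})''), and avoiding it is the raison d'\^etre of the ``VFC without gluing'' approach. There is also an ordering issue: the Kuranishi-chart structure that your finite-dimensional reduction would rely on is only established later (Lemmas~\ref{lemma511}--\ref{lemma512}), whereas Corollary~\ref{cor58} is proved using only Lemma~\ref{lemma56}.

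The paper's actual proof is deliberately weaker in its inputs: it uses only \emph{stratumwise} metrizability (Lemma~\ref{lemma56}) together with one extra geometric fact you did not invoke, namely that the stratification of ${\mc M}_I$ is cut out by vanishing of gluing parameters, so that for each $k$ one can write down a decreasing sequence of open sets $U_{k,n} \subset {\mc M}_{I,\leq k}$ with $\bigcap_n U_{k,n} = {\mc M}_{I,\leq k-1}$. It then runs an induction on strata, producing shrinking neighborhoods of $Z_{\leq k}$ inside ${\mc M}_{I,\leq k}$ by interleaving (i) the inductively given shrinking neighborhoods of $Z_{\leq k-1}$, (ii) the sets $U_{k,n}$, and (iii) shrinking neighborhoods of the compact set $Z_k \setminus U_{k,n}$ inside the metrizable top stratum ${\mc M}_{I,\lambda_k}$. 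If you want to repair your argument, the fix is to abandon local metrizability of ${\mc M}_I$ and instead prove the $G_\delta$ property directly along the stratification as the paper does.
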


\begin{proof}
Order elements of $\Lambda$ increasingly as $\lambda_1, \ldots, \lambda_m$. For each $k$, denote 
\begin{align*}
&\ Z_k:= Z \cap {\mc M}_{I, \lambda_k},\ &\ Z_{\leq k}:= \bigcup_{l \leq k} Z_l.
\end{align*}
Then each $Z_{\leq k}$ is compact. We prove inductively that each $Z_{\leq k}$ has a shrinking sequence of neighborhoods inside ${\mc M}_{I, \leq k}:= \bigcup_{\lambda_l \leq \lambda_k} {\mc M}_{I, \lambda_l}$. For the initial stage $\lambda_1$, which is a minimal stratum, because ${\mc M}_{I, \lambda_1}$ is metrizable and $Z_1$ is compact, there is a shrinking sequence of neighborhoods of $Z_1$ inside ${\mc M}_{I, \leq 1}$. Suppose the claim is true for $k - 1$ for some $k \leq m$. We would like to construct a sequence of shrinking neighborhoods of $Z_{\leq k}$ inside ${\mc M}_{I, \leq k}$.

First, by induction hypothesis, one can find a sequence of shrinking neighborhoods $W_{k-1,n} \subset {\mc M}_{I, \leq k-1}$ of $Z_{\leq k-1}$. In a thickened moduli space, the stratification is determined by the vanishing of certain gluing parameters. Then one can choose a sequence of open neighborhoods $U_{k, n} \subset {\mc M}_{I, \leq k}$ of ${\mc M}_{I, \leq k-1}$ such that 
\beqn
U_{k, n+1} \subset U_{k, n},\ {\mc M}_{I, \leq k-1} = \bigcap_{n \geq 1} U_{k, n+1}.
\eeqn
Then we can find a sequence of neighborhoods $W_{k, n}' \subset {\mc M}_{I, \leq k}$ of $Z_{\leq k-1}$ and we can assume that 
\begin{align}\label{eqn56x}
&\ W_{k, n}' \cap Z_{\leq k} = U_{k, n} \cap Z_{\leq k},\ &\ W_{k,n}' \cap {\mc M}_{I, \leq k-1} = W_{k-1,n}.
\end{align}
On the other hand, since $Z_k \setminus U_{k, n}$ is compact and the stratum ${\mc M}_{I, \lambda_k}$ is metrizable, one can find a sequence of shrinking neighborhoods 
\beqn
Z_k \setminus U_{k, n} \subset W_{k, n, m}'' \subset {\mc M}_{I, \lambda_k},\ m \geq 0.
\eeqn
Then define 
\beqn
W_{k, n}: = \bigcap_{l \leq n} (W_{k, l}' \cup W_{k, l, n-l}'' )
%............................ ( W_{k, n}' \cup W_{k, n, n}'') \cap \bigcap_{l \leq n-1} ( W_{k, l}' \cup W_{k, l, l+n}'').
\eeqn
We claim that $W_{k, n}$ is a sequence of shrinking neighborhoods of $Z_{\leq k}$. By definition, $Z_{\leq k}$ is clearly contained in each $W_{k, n}$. Suppose $p \in W_{k, n}$ for all $n$, we need to show that $p \in Z_{\leq k}$. If $p \in {\mc M}_{I, \leq k-1}$, then by the second identity of \eqref{eqn56x}, we see $p \in W_{k-1, n}$ for all $n$, hence by the induction hypothesis, $p \in Z_{\leq k-1} \subset Z_{\leq k}$. Suppose $p \in {\mc M}_{I, \lambda_k}$. Then there exists a sufficiently large $l$ such that $p \notin W_{k, l}'$. Then for all $n$,
\beqn
p \in W_{k, n} \ \forall n\ \Longrightarrow p \in W_{k,l}' \cup W_{k, l, n-l}''\ \forall n\ \Longrightarrow p \in W_{k, l, n}''\ \forall n.
\eeqn
As $W_{k, l, n}''$ is a sequence of shrinking neighborhoods of $Z_k \setminus U_{k,l}$, it follows that $p\in Z_k \setminus U_{k, l} \subset Z_{\leq k}$. 
\end{proof}

Define 
\beqn
\Gammait_I:= \prod_{\alpha \in I} \Gammait_\alpha.
\eeqn

\begin{defn}(The $\Gammait_I$-action on the thickened moduli spaces)\label{defn59}
Given $\gamma = (\gamma_\alpha)_{\alpha \in I} \in \Gammait_I$ and a point $\tilde p \in {\mc M}_I$ represented by $({\mc C}, {\bm u}, ({\bm y}_\alpha), (\phi_\alpha), (e_\alpha))$, define 
\beqn
\gamma \cdot \tilde p \in {\mc M}_I
\eeqn
to be the point represented by the tuple 
\beqn
({\mc C}, {\bm u}, {\bm y}_\alpha, (\gamma_\alpha \circ \phi_\alpha), (\gamma_\alpha(e_\alpha)))
\eeqn
where we use the $\Gammait_\alpha$-action on the universal family ${\mc U}_\alpha$ and the $\Gammait_\alpha$-action on ${\bm E}_\alpha$. One can check that this gives an action of $\Gammait_I$ on ${\mc M}_I$. 
\end{defn}

Define 
\beqn
{\bm E}_I:= \bigoplus_{\alpha \in I} {\bm E}_\alpha.
\eeqn
There is a $\Gammait_I$-equivariant map 
\beqn
{\bm S}_I: {\mc M}_I \to {\bm E}_I
\eeqn
by forgetting all data except $(e_\alpha)_{\alpha \in I}$. One can see that each point in the zero locus ${\bm S}_I^{-1}(0)$ provides a gauge equivalence class of stable solutions to the gauged Witten equation with extra data $({\bm y}_\alpha)$ and $(\phi_\alpha)$, hence there is a well-defined $\Gammait_I$-invariant continuous map 
\beqn
{\bm \psi}_I: {\bm S}_I^{-1}(0) \to \ov{\mc M}_S(V, d).
\eeqn
Modulo the $\Gammait_I$-action we have a tuple 
\beq\label{eqn56}
C_I:= (U_I, E_I, S_I, \psi_I, F_I):= ( {\mc M}_I/ \Gammait_I, ({\mc M}_I \times {\bm E}_I)/\Gammait_I, {\bm S}_I/ \Gammait_I, {\bm \psi}_I / \Gammait_I, {\rm Im}( {\bm \psi}_I ))
\eeq
which will be the prototypical charts on the moduli space $\ov{\mc M}_S(V, d)$. The rest of the construction is to prove various properties of such charts, to carefully choose a covering collection, to construct coordinate changes and perturbations.

\subsection{Constructing basic charts}

We abbreviate $\ov{\mc M}_S(V, d)$ by $\ov{\mc M}$. We first prove that at each point $p \in \ov{\mc M}$ there exists a thickening datum.

\begin{prop}
For each $p\in \ov{\mc M}$ there exists a thickening datum $\alpha$ centered at $p$.
\end{prop}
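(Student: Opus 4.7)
The plan is to assemble the seven pieces of data listed in Definition \ref{defn54} one at a time, taking care at each step that the choices can be made compatible with the (finite) automorphism group $\Gammait_\alpha$ of a chosen representative.

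First, pick any representative $({\mc C}_\alpha, {\bm u}_\alpha)$ of the class $p$; its combinatorial type is precisely the map type $\lambda$, so requirements (1) and (2) are immediate. Next, apply the stabilizing procedure of Section \ref{subsection52} to produce an unordered tuple ${\bm y}_\alpha \subset {\mc C}_\alpha$ which makes the pointed domain $({\mc C}_\alpha, {\bm y}_\alpha)$ stable; since the stabilizing recipe adds markings only on pointlike-instanton components (introducing ghost bubbles where needed to accommodate markings attached to holomorphic-sphere components), the construction can be made $\Gammait_\alpha$-equivariant, which gives item (3). With a stable pointed curve in hand, the existence of resolution data (see the last sentence of Section 5.1 and Definition \ref{defn53}) produces a $\Gammait_\alpha$-equivariant local universal unfolding $\pi_\alpha : {\mc U}_\alpha \to {\mc V}_\alpha$ together with the equivariant trivialization \eqref{eqn55}; this is item (4).

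For the obstruction bundle ${\bm E}_\alpha$ and the injection $\iota_\alpha$ (items (5) and (6)), the key input is that the linearization $D_\alpha$ of the gauged Witten equation at ${\bm u}_\alpha$ on the central fibre is a Fredholm operator between appropriate Banach spaces, so it has finite-dimensional cokernel. By elliptic regularity one can represent the cokernel by smooth $(0,1)$-forms with values in $TV$, and by a cutoff argument these can be chosen to be compactly supported in the interior of ${\mc U}_\alpha$, contained in the subset $U_\alpha$ from the resolution datum (this is the standard ``support in a good region'' trick; the region $U_\alpha$ is precisely arranged to avoid the special locus ${\mc U}_\alpha^*$). Then I would average these finitely many sections over $\Gammait_\alpha$ and enlarge the span to obtain a $\Gammait_\alpha$-invariant finite-dimensional subspace ${\bm E}_\alpha \subset C_c^\infty({\mc U}_\alpha \times V, \Lambda^{0,1}_{{\mc U}_\alpha/{\mc V}_\alpha} \otimes TV)^K$ that surjects onto $\mathrm{coker}\,D_\alpha$. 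The inclusion $\iota_\alpha$ is then tautological, and transversality to the image of $D_\alpha$ on the central fibre is automatic by construction.

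Finally, for the hypersurfaces ${\bm H}_\alpha$ (item (7)), I would treat the two cases separately. When $y \in {\bm y}_\alpha$ lies on a nonconstant pointlike-instanton component $\Sigma_v$, the image $u_v(y)$ lies in $V^{\mathrm{ss}}$ (the markings are placed in the free part of the action), and $u_v$ has nonvanishing derivative there generically on the stabilized component; pick a smooth codimension-two disk through $u_v(y)$ transverse to the image. When $y$ is on a ghost bubble attached at $y'$ to a nonconstant holomorphic sphere $u_v : \Sigma_v \to X$, choose first a codimension-two submanifold $\ov H \subset X$ transverse to $u_v$ at $y'$, and then lift it to a $K$-invariant codimension-two submanifold $H_{\alpha,y} \subset \mu^{-1}(0) \cdot$ (a neighborhood) by using the principal $K$-bundle $\mu^{-1}(0) \to X$; this is possible because the $K$-action on $\mu^{-1}(0)$ is free. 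The collection ${\bm H}_\alpha$ so obtained can be further averaged / indexed equivariantly under $\Gammait_\alpha$.

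The only nontrivial technical point is arranging the transversality of $\iota_\alpha({\bm E}_\alpha)$ to $\mathrm{im}\,D_\alpha$ simultaneously with the compact-support and $\Gammait_\alpha$-equivariance constraints; that is the step I expect to require the most care, but it reduces to the standard combination of elliptic regularity, cutoff, and group averaging.
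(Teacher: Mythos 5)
Your proof follows essentially the same outline as the paper's: pick a representative, stabilize the domain equivariantly, choose resolution data, build obstruction spaces from the cokernel of the linearization, and choose normalizing hypersurfaces. The differences are minor (you construct $\iota_\alpha$ and ${\bm E}_\alpha$ before ${\bm H}_\alpha$, and you work with one global linearization where the paper works component-by-component), so this is not a genuinely different route.

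There is, however, one spot where your sketch glosses over a real difficulty. For a marked point $y$ on a nonconstant pointlike-instanton component $\Sigma_v$, you claim that ``$u_v$ has nonvanishing derivative there generically'' and then pick a codimension-two disk transverse to the image. But what one actually gets from the vortex equation is that the \emph{covariant} derivative $\partial_s u_v + {\mc X}_{\phi_v}(u_v)$ is nonzero at a dense open set of points in $V^{\rm ss}$ (Lemma~\ref{lemmaa7} on $K$-immersive points); the ordinary derivative $du_v(y)$ may still lie entirely in the tangent space to a $G$-orbit, for instance when the whole component $u_v$ lies in the closure of a single $G$-orbit. In that case a hypersurface transverse to the image of $u_v$ is not automatically compatible with the slicing condition in Definition~\ref{defn55}. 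The paper handles this by first applying a gauge transformation near $y$ so that $u_v$ itself becomes an immersion there (see the footnote in the paper's proof), and only then choosing $H_y$. A related caveat appears in your construction of ${\bm E}_\alpha$: reducing the cokernel to compactly supported $(0,1)$-forms is not just ``cutoff''; for the instanton components the cokernel a priori mixes the $TV$-valued and ${\mf k}$-valued parts, and one needs the unique-continuation argument of Lemma~\ref{lemmaa8} to show that the $TV$-valued part $\eta_0$ is not identically zero on the chosen region $U$ before cutting off. You flagged this step as delicate, which is the right instinct, but the specific mechanism (nonvanishing of $\eta_0$, then unique continuation) is worth naming.
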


\begin{proof}
Choose a representative $({\mc C}, {\bm u})$ for $p$ where ${\mc C}$ is an (unmarked) scaled curve and ${\bm u}$ is a stable pointlike instanton over ${\mc C}$. Let $\Gamma$ be the underlying combinatorial type of ${\mc C}$. 

We first describe the choice of stabilizing points ${\bm y}$ and the normalization hypersurface $H$. Consider each unstable vertex $v$ of $\Gamma$. If $v \in V_\Gamma^\infty$, then $u_v: \Sigma_v \cong {\mb C} \to X$ is a nontrivial holomorphic sphere. Then by the standard knowledge of pseudoholomorphic curves, there exists an open dense subset $\Sigma_v^* \subset \Sigma_v$ over which $u_v$ is an immersion. Then one can choose either one or two points over $\Sigma_v$ to stabilize this component. If we need to choose more than one such points, then we choose them such that their images are distinct. Denote by ${\bm y}_v$ this set of one or two points. Then one can choose for each $y \in {\bm y}_v$ an embedded submanifold (with boundary) $\ov{H}_y \subset X$  of codimension two such that $u_v$ intersects $\ov{H}_y$ transversely at $y$ (it is possibly that $u_v$ intersect $\ov{H}_y$ at other points. One can lift $\ov{H}_y$ to a $K$-invariant embedded codimension two submanifold $H_y \subset V$ which intersects $\mu^{-1}(0)$ transversely. Moreover, we replace each newly added marking by a new component together with a corresponding marking; the new component supports a covariantly constant pointlike instanton (see Subsection \ref{subsection52} and Figure \ref{figure1}).

On the other hand, if $v \in V_\Gamma^1$ and is unstable, then by Lemma \ref{lemmaa7}, there exists a nonempty open subset $\Sigma_v^* \subset \Sigma_v \cong {\mb C}$ over which the vectors $\partial_s u_v + {\mc X}_{\phi_v}(u_v)$ and $\partial_t u_v + {\mc X}_{\psi_v}(u_v)$ are linearly independent and over which the image of $u_v$ are contained in the neighborhood of $\mu^{-1}(0)$ where the $K$-action is free (i.e. a $K$-immersive point). Choose a point $y \in \Sigma_v^*$. Modify the gauged map $u_v = (u_v, \phi_v, \psi_v)$ via a suitable gauge transformation such that $u_v$ is an immersion near $y$.\footnote{This is necessary to deal with the case when the component $u_v$ lies entirely in the closure of a $G$-orbit.} Then choose an embedded submanifold  $H_y \subset V$ of codimension two (not necessarily $K$-invariant) such that $u_v$ intersects $H_y$ transversely at $y$. 

Denote by ${\bm y}$ the collection of all such points $y$ and ${\bm H}\subset V$ the union of all chosen embedded submanifolds $H_y$. We can choose them in an $\Gammait$-equivariant way. Indeed, by adding more points to ${\bm y}$ and adding more components to ${\bm H}$, ${\bm y}$ can be made $\Gammait$-invariant. Moreover, for each $\gamma \in \Gammait$ which contains a domain isomorphism 
\beqn
\gamma: \Sigma_v \to \Sigma_{v'}
\eeqn
mapping $y \in \Sigma_v \cap {\bm y}$ to $y' \in \Sigma_{v'} \cap {\bm y}$ and a gauge transformation $g_\gamma: \Sigma_v \to K$, there holds
\beqn
H_{y'} = g_\gamma(y) H_y.
\eeqn

We now describe the choice of the obstruction spaces. For each vertex $v\in V_\Gamma^\infty$, let ${\mc B}_v$ be the Banach manifold of $W^{1,p}$-maps from $S^2\cong \ov{\Sigma_v}$ to $X$ and let ${\mc E}_v \to {\mc B}_v$ be the Banach space bundle of $L^p$-sections of $\Lambda^{0,1} \otimes u_v^* TX$. Then the linearization of the Cauchy--Riemann equation at $u_v: S^2 \to X$ provides a linear Fredholm map
\beqn
D_v: W^{1,p}(S^2, u_v^* TX) \to L^p(S^2, \Lambda^{0,1}\otimes u_v^* TX).
\eeqn
Let $\mathring{W}^{1,p}(S^2, u_v^* TX) \subset W^{1,p}(S^2, u_v^* TX)$ be the subspace of infinitesimal deformations whose values at the special points (including those added marked points) are zero, which has a finite codimension. Then choose a finite-dimensional subspace $E_v \subset {\mc E}_v:=L^p(S^2, \Lambda^{0,1}\otimes u_v^* TX)$ generated by smooth sections with compact supports away from special points which is transverse to the image of the restriction of $D_v$ to $\mathring{W}^{1,p}(S^2, u_v^* TX)$. On the other hand, for each $v \in V_\Gamma^1$, Lemma \ref{lemmaa8} implies that there exists a finite-dimensional $E_v$ satisfies similar properties. Define 
\beqn
{\bm E}_\alpha:= \bigoplus_{v\in V_\Gamma} E_v \subset \bigoplus_{v\in V_\Gamma} {\mc E}_v.
\eeqn
We can choose $E_v$'s such that ${\bm E}_\alpha$ is a $\Gammait$-invariant subspace of the direct sum of ${\mc E}_v$'s. 

Lastly we choose an equivariant map $\iota_\alpha$. First, for each $v \in V_\Gamma^\infty$, one can use a gauged map ${\bm u}_v = (u_v, \phi_v, \psi_v)$ to represent the holomorphic map $u_v$. Then the image of $u_v$ is contained in the semi-stable locus $V^{\rm ss}$ where the $K$-action is free. Hence one can define an inclusion
\beqn
\iota_v: E_v \hookrightarrow \Gamma( \Sigma_v \times V, \Lambda^{0,1}\otimes TV)^K
\eeqn
whose support is contained in the product of the complement of special points and $V^{\rm ss}$, such that 
\beqn
u_v^* \circ \iota_v  = {\rm Id}_{E_v}.
\eeqn
Similarly, for $v\in V_\Gamma^1$, as the image of supports of vectors in $E_v$ is contained in $V^{\rm ss}$, one can embed $E_v$ into $\Gamma(\Sigma_v \times V, \Lambda^{0,1}\otimes TV)^K$ satisfying similar conditions. Lastly, using the trivialization \eqref{eqn55} contained in the resolution datum ${\bm r}_\alpha$, one can extend the embeddings $\iota_v$ to nearby fibres of the universal family ${\mc U}_\alpha \to {\mc V}_\alpha$ in a $\Gammait_\alpha$-equivariant way. This produces the linear map $\iota_\alpha$ satisfying conditions required in Definition \ref{defn54}.
\end{proof}

Now we prove properties of thickened moduli spaces corresponding to a single thickening datum. Recall that for a finite collection $I$ of thickening data, one has the thickened moduli space ${\mc M}_I$ with a $\Gammait_I$-action, an equivariant map ${\bm S}_I: {\mc M}_I \to {\bm E}_I$ and an invariant map ${\bm S}_I^{-1}(0) \to \ov{\mc M}$. This produces a tuple 
\beqn
C_I = (U_I, E_I, S_I, \psi_I, F_I)
\eeqn
(see \eqref{eqn56}). When $I = \{\alpha\}$, also denote the thickened moduli space by ${\mc M}_\alpha$ and the tuple by $C_\alpha$. After appropriate shrinking $C_\alpha$ becomes a chart of $\ov{\mc M}$.

\begin{lemma}\label{lemma511}
Let $\alpha$ be a thickening datum at a point $p \in \ov{\mc M}$. The following facts hold.
\begin{enumerate}

\item $U_\alpha$ is a locally compact, Hausdorff, and first countable. 

\item The image of map $\psi_\alpha: S_\alpha^{-1}(0) \to \ov{\mc M}$ contains an open neighborhood of $p$ and the restriction of $\psi_\alpha$ to a small neighborhood of the center is a homeomorphism onto an open neighborhood $F_\alpha \subset \ov{\mc M}$ of $p$. 

\end{enumerate}
\end{lemma}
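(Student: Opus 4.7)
My plan is to handle (a) using the Gromov-type compactness framework for the thickened moduli space ${\mc M}_\alpha$ together with the metrizability results already established, and to handle (b) via a topological implicit function theorem argument that exploits the transversality of the normalization hypersurfaces ${\bm H}_\alpha$.

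For (a), Hausdorffness of $U_\alpha = {\mc M}_\alpha/\Gammait_\alpha$ would follow from the unique-limit property of the Gromov-type topology on ${\mc M}_\alpha$: two non-isomorphic thickened solutions are separated by neighborhoods built out of $C^\infty_{\rm loc}$-distance on components together with the norm on the finite-dimensional parameter $e_\alpha \in {\bm E}_\alpha$. Local compactness I would obtain by adapting the compactness argument of Section \ref{section3} to $\{\alpha\}$-thickened solutions with bounded energy and bounded $|e_\alpha|$: the perturbation $\iota_\alpha(e_\alpha)\circ \phi_\alpha$ has support disjoint from special points and is uniformly controlled, so bubbling analysis produces a subsequential Gromov limit that remains a thickened solution. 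First countability follows from Lemma \ref{lemma56} (each stratum of ${\mc M}_\alpha$ is metrizable) combined with Corollary \ref{cor58} applied to the compact singleton $\{[\tilde p]\}$, yielding a countable shrinking neighborhood base at $[\tilde p]$; passage to the finite quotient by $\Gammait_\alpha$ preserves all three properties.

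For (b), let $\tilde p \in {\mc M}_\alpha$ be the distinguished central point with $e_\alpha = 0$ corresponding to the datum $\alpha$. Given any $q \in \ov{\mc M}$ Gromov-close to $p$, I would choose a representative $({\mc C}_q,{\bm u}_q)$ close to $({\mc C}_\alpha,{\bm u}_\alpha)$ and use the transverse intersection of ${\bm u}_\alpha$ with each $H_y \in {\bm H}_\alpha$ (respectively the transverse intersection of $\ov H_y \subset X$ with the relevant holomorphic sphere component, in the ghost case) together with the topological implicit function theorem to produce a unique nearby intersection point $y_q$ in a small disc about each $y \in {\bm y}_\alpha$. This assembles into a canonical stabilization $({\mc C}_q,{\bm y}_q)$ of the same combinatorial type as $({\mc C}_\alpha,{\bm y}_\alpha)$ and hence, via a map $\phi_\alpha$ identifying $({\mc C}_q,{\bm y}_q)$ with some fibre of the unfolding ${\mc U}_\alpha$, a point of ${\mc V}_\alpha$ that is unique modulo $\Gammait_\alpha$. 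A gauge-fixing step, using the freeness of the $K$-action on $V^{\rm ss}$ and the local slices $W_x^\delta$, then selects a canonical gauge representative of ${\bm u}_q$ satisfying the slice condition of Definition \ref{defn55}. The resulting tuple $({\mc C}_q,{\bm u}_q,{\bm y}_q,\phi_\alpha,0)$ is an $\{\alpha\}$-thickened solution in $S_\alpha^{-1}(0)$ with $\psi_\alpha$-image equal to $q$, so $F_\alpha$ contains a neighborhood of $p$.

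The local homeomorphism statement then follows because the inverse assignment constructed above is continuous in the Gromov topology, and injective modulo $\Gammait_\alpha$: any two $\{\alpha\}$-thickened representatives of the same $q$ are related by a gauge transformation and domain isomorphism that must preserve the normalization data and the marking structure, hence lie in $\Gammait_\alpha$ after sufficient shrinking. The step I expect to be the main obstacle is verifying continuity of the gauge-fixing across the stratum boundaries of $\ov{\mc M}$: the unique gauge transformation producing the $K$-slice condition at each $y_q$ must depend continuously on $q$ even as nodes resolve, which requires careful compatibility of the local slice construction with the trivialization \eqref{eqn55} encoded in the resolution datum ${\bm r}_\alpha$.
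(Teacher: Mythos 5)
Your part (b) construction of the nearby thickened solution—finding unique transverse intersections with each $H_y$, assembling the stabilization $({\mc C}_q,{\bm y}_q)$, mapping it to a fibre of ${\mc U}_\alpha$, and gauge-fixing via the slices $W_x^\delta$—is essentially the same content as the paper's argument (which phrases it as a contradiction with a sequence $p_i \to p$ and uses Lemma~\ref{lemma211} for the adiabatic components). The injectivity-modulo-$\Gammait_\alpha$ argument also matches.

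There are two points where your route diverges from the paper's, and both matter.

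First, your derivation of first countability in (a) has a gap. Corollary~\ref{cor58} applied to the singleton $\{[\tilde p]\}$ produces a sequence of open neighborhoods $W_n$ with $\bigcap_n W_n = \{[\tilde p]\}$, but this is \emph{not} the same as a countable neighborhood base: a sequence of neighborhoods can have trivial intersection without being cofinal in the neighborhood filter (one can always adjoin shrinking ``spurious'' open pieces far from the point). The paper does not attempt to extract first countability from Corollary~\ref{cor58}; it instead invokes the construction of \cite[Section 5.6]{McDuff_Salamon_2004}, which builds a first-countable topology directly from the notion of Gromov--Uhlenbeck sequential convergence. You would need to cite that argument (or prove that the shrinking neighborhoods can be taken precompact and nested, which is a genuinely additional step) rather than leaning on Corollary~\ref{cor58}.

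Second, for the homeomorphism statement you propose to verify directly that the inverse assignment $q \mapsto [\text{thickened solution}]$ is continuous, and you correctly flag the continuity of the gauge-fixing across stratum boundaries as the hard part. The paper avoids this entirely via a point-set-topology shortcut: since $U_\alpha$ is locally compact and $\ov{\mc M}$ is Hausdorff, one picks a precompact open $U_\alpha'$ around the center, observes that $\psi_\alpha|_{\ov{U_\alpha'}}$ is a continuous injection from a \emph{compact} space to a Hausdorff space and hence a homeomorphism onto its image, and then uses the already-established surjectivity to see the image of $U_\alpha'$ is open. This removes the need to track any continuity of the gauge-fixing or of the stabilization map. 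Your approach would work if the continuity you worry about can be established, but it is strictly more effort, and the obstacle you anticipate is precisely what the paper's argument is designed to bypass. I'd recommend adopting the compact-to-Hausdorff trick instead of proving continuity of the inverse by hand.
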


\begin{proof}
Proof of (a). By using the same argument as in \cite[Section 5.6]{McDuff_Salamon_2004} (see also \cite[Remark 2.2]{Venugopalan_Xu}) the thickened moduli space has a unique first countable and separable topology whose set of converging sequences agrees with the set of Gromov--Uhlenbeck converging sequences. It is easy to see that $U_\alpha$ is locally compact.

Proof of (b). We argue by contradiction. Suppose it is not the case as claimed. Then there exists a sequence $p_i \in \ov{\mc M}$ converging to $p$ such that none of the $p_i$'s is contained in $F_\alpha$. Then by the definition of sequential convergence of stable pointlike instantons, one can choose a sequence of representatives $({\mc C}_i, {\bm u}_i)$ which Gromov converge to the representative $({\mc C}_\alpha, {\bm u}_\alpha)$. For notational simplicity we only prove the situation when the domain ${\mc C}_i$ is smooth; general situation is similar. Then we need to show that for $i$ sufficiently large, up to gauge transformation, $({\mc C}_i, {\bm u}_i)$ belongs to an $\alpha$-thickened solution. We first add stabilizing markings to ${\mc C}_i$. By the definition of convergence, for each $v \in V_\Gamma^1$, there is a sequence of affine linear transformations $\varphi_{i, v}: \Sigma_v \to {\mc C}_i \cong {\mb C}$ such that $\varphi_{i, v}^* {\bm u}_i$ converges modulo gauge transformation to ${\bm u}_{\alpha,v}$. For each marking $y \in \Sigma_v$, for sufficiently small neighborhood $B_r(y) \subset \Sigma_{\alpha, v} \subset {\mc C}_\alpha$, one can gauge transform the restriction ${\bm u}_i|_{\varphi_{i, v}(B_r(y))}$ such that $u_i(\varphi_{i,v}(B_r(y)))$ is contained in the standard $K$-slice through $u_\alpha(y)$. Then $u_i\circ \varphi_{i,v}|_{B_r(y)}$ converges in $C^\infty$ to $u_\alpha|_{B_r(y)}$. Since $u_\alpha$ is transverse to $H_{\alpha, y}$ at $y$, for $r$ sufficiently small and $i$ sufficiently large, there is a unique point $y_i \in \varphi_{i,v}(B_r(y))$ such that $u_i$ intersects transversely with $H_{\alpha, y}$ at $y_i$. Similarly, for each $v \in V_\Gamma^\infty$, there is a sequence of affine linear transformations $\varphi_{i,v}: \Sigma_v \to {\mc C}_i \cong {\mb C}$ such that ${\bm u}_i \circ \varphi_{i,v}$ converges in the large area adiabatic limit to $u_v$ (see Definition \ref{defn210}). Recall that for each stabilizing marking $y \in \Sigma_v$, $du_v(y): T_y \Sigma_v \to T_{u_v(y)} X$ is injective. Moreover, by Lemma \ref{lemma211}, $d(u_i \circ \varphi_{i,v})$ converges to $du_v$ modulo directions tangent to $G$-orbits uniformly on compact sets. Since $u_v$ intersects $H_{\alpha, y}$ transversely at $y$, it follows that $u_i \circ \varphi_{i,v}$ intersect to $H_{\alpha, y}$ at some point near $\varphi_{i,v}(y)$ and the intersection point $y_i$ is unique. Therefore one can also add $y_i$ to ${\mc C}_i$. Let ${\bm y}_i \subset {\mc C}$ be the collection of points selected in this way. Then it follows that the marked curves $({\mc C}_i, {\bm y}_i)$ converges to $({\mc C}_\alpha, {\bm y}_\alpha)$, or equivalently, for $i$ sufficiently large, there exist deformation parameters $\eta_i \to 0$ and gluing parameters $\zeta_i\to 0$ and an isomorphism 
\beqn
\phi_i: ({\mc C}_i, {\bm y}_i) \cong ({\mc C}_{\alpha, \eta_i, \zeta_i}, {\bm y}_{\alpha, \eta_i, \zeta_i}).
\eeqn
By our construction, the tuple $({\mc C}_i, {\bm u}_i, {\bm y}_i, \phi_i, 0)$ (where $0 \in {\bm E}_\alpha$ is the zero vector) is an $\alpha$-thickened solution. This is a contradiction. Hence $F_\alpha$ does contain an open neighborhood of $p$ in $\ov{\mc M}$. 

It remains to show that after restriction, $\psi_\alpha: S_\alpha^{-1}(0) \to F_\alpha$ is a homeomorphism. For this we show that $\psi_\alpha$ is injective. Suppose this is not the case, then there exist two $\alpha$-thickened solutions $({\mc C}, {\bm u}, {\bm y}, \phi, 0)$ and $({\mc C}', {\bm u}', {\bm y}', \phi', 0)$ such that $({\mc C}, {\bm u})$ is isomorphic to $({\mc C}', {\bm u}')$ as stable pointlike instantons. After identifying domains and gauge transforming, we may assume ${\mc C} = {\mc C}'$ and ${\bm u} = {\bm u}'$. We need to show that these two thickened solutions are in the same $\Gammait_\alpha$-orbit (see Definition \ref{defn59}). As ${\bm u}$ and ${\bm u}'$ are both (locally) close to ${\bm u}_\alpha$ and the intersections with ${\bm H}_\alpha$ are transverse, we see that as unordered sets, ${\bm y} = {\bm y}'$. By the construction of the local universal family, there exists a unique $\gamma \in {\rm Aut}({\mc C}_\alpha, {\bm y}_\alpha)$ such that
\beqn
\phi = \gamma \circ \phi': {\mc C} \to {\mc U}_\alpha.
\eeqn
Since the stabilizing markings ${\bm y}_\alpha$ are chosen such that $\Gammait_\alpha = {\rm Aut}({\mc C}_\alpha, {\bm u}_\alpha)  = {\rm Aut}({\mc C}_\alpha, {\bm y}_\alpha)$, it follows from Definition \ref{defn59} that the two thickened solutions are in the same $\Gammait_\alpha$-orbits. Therefore, $\psi_\alpha$ is injective. It needs an additional argument in point set topology to show that $\psi_\alpha$ is a homeomorphism. As $U_\alpha$ is locally compact, there is a precompact open neighborhood $U_\alpha'$ of the base point $\psi_\alpha^{-1}(p_\alpha)$. Since $\ov{\mc M}_\alpha$ is Hausdorff and the restriction $\psi_\alpha|_{\ov{U_\alpha'}}$ is one-to-one and continuous, $\psi_\alpha|_{\ov{U_\alpha'}}$ is then a homeomorphism onto its image.\footnote{Here we use the fact: a continuous bijective map from a compact space to a Hausdorff space is necessarily a homeomorphism.} Then by the surjectivity of $\psi_\alpha$ proved above, $\psi_\alpha|_{U_\alpha'}$ is a homeomorphism onto an open neighborhood $F_\alpha'$ of $p_\alpha$. We may just assume $U_\alpha$ itself satisfies this condition.
\end{proof}

Next we prove the so-called ``semi-continuity'' property, which implies that locally each stratum of the thickened moduli space is a topological manifold. 

\begin{lemma}\label{lemma512}
The thickened moduli space ${\mc M}_\alpha$ is naturally stratified by the map types and for each stratum $\lambda$, the stratum ${\mc M}_{\alpha, \lambda}$ is a topological manifold near $p$ of dimension ${\rm ind}(\lambda) + {\rm dim} {\bm E}_\alpha$. 
\end{lemma}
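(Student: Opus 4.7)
The plan is to stratify $\mc M_\alpha$ by map type and prove the manifold statement for each stratum $\lambda$ separately via an implicit function theorem argument. Fix such a $\lambda$; it specifies a combinatorial type $\Gamma_\lambda$ obtained from the central type $\Gamma_\alpha$ by resolving a specified subset of nodes. Correspondingly, the base $\mc V_\alpha = V_{\rm def}^\epsilon \times V_{\rm res}^\epsilon$ is stratified by which gluing parameters $\zeta_e$ vanish, and the $\lambda$-stratum $\mc V_{\alpha,\lambda}$ is a locally closed smooth submanifold. Via the trivialization \eqref{eqn55} from the resolution datum, the universal family $\mc U_\alpha \to \mc V_\alpha$ restricts to a smooth family of (possibly still nodal) curves over $\mc V_{\alpha,\lambda}$.

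Over this base I would assemble a Banach manifold $\mc B_{\alpha,\lambda}$ of gauge-equivalence classes of $W^{k,p}$-gauged maps $\bm u$ on the fibres, with the gauge symmetry pinned down at each $y \in \bm y_\alpha$ by the standard $K$-slice condition together with the requirement $\bm u(y) \in H_{\alpha,y}$ (cf.\ condition~(a) of Definition~\ref{defn55}). Over $\mc B_{\alpha,\lambda} \times \bm E_\alpha$, the $\alpha$-perturbed gauged Witten operator
\[
\mc F_\alpha(\bm u, e) = \mc F_{\mc C}(\bm u) + \iota_\alpha(e)\circ \phi_\alpha
\]
defines a smooth section of the natural Banach bundle $\mc E$, whose zero locus is exactly $\mc M_{\alpha,\lambda}$ near $p$.

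At the central point $p$ (with $e=0$), the linearization splits as $D\mc F_\alpha = D_\alpha \oplus \iota_\alpha$, where $D_\alpha$ is the linearization of the unperturbed equation on $\mc B_{\alpha,\lambda}$. A Riemann--Roch computation performed componentwise---holomorphic spheres into $X$ for $v \in V_\Gamma^\infty$, affine vortices into $V$ for $v \in V_\Gamma^1$, both appearing in Proposition~\ref{propa4}---combined with the quotient by translation at the root component and the two real codimensions killed at each $y \in \bm y_\alpha$ by the slice plus hypersurface conditions, yields $\on{ind} D_\alpha = \on{ind}(\lambda)$. Condition~(f) of Definition~\ref{defn54} asserts precisely that $\iota_\alpha(\bm E_\alpha)$ is transverse to $\on{Im} D_\alpha$ on the central fibre, so $D\mc F_\alpha$ is surjective. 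The implicit function theorem (in its $C^0$ topological form, compatible with the rest of the paper's framework) then produces a topological manifold structure on $\mc M_{\alpha,\lambda}$ near $p$ of dimension $\on{ind}(\lambda) + \on{dim}\bm E_\alpha$, as required.

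The main obstacle is to verify that the linearization depends continuously on the gluing parameters within $\mc V_{\alpha,\lambda}$ so that the transversality of $\iota_\alpha$ to $\on{Im} D_\alpha$ established at the central fibre persists in a neighborhood of $p$. This is where the trivialization of Definition~\ref{defn53}(iii) does the work: away from the node/marking locus $\mc U_\alpha^*$ it identifies all nearby fibres with a fixed smooth bundle, so the obstruction vectors $\iota_\alpha(e)\circ\phi_\alpha$ form a continuous family and classical Fredholm continuation reduces the problem to the transversality already established at the central fibre. A secondary subtlety is that within a given stratum the gluing parameters form a smooth submanifold of the affine variety $V_{\rm res}$ (the full variety is only piecewise smooth), but since we work one stratum at a time this causes no difficulty.
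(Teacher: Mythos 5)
Your set-up — stratify $\mc M_\alpha$ by map type, pin the gauge by the $K$-slice plus hypersurface conditions at the $y\in\bm y_\alpha$, and apply an implicit function theorem to the section $\mc F_\alpha$ over the stratum's Banach manifold of gauged maps — is the right framework and matches the paper's intent. The index bookkeeping and the use of condition (f) of Definition~\ref{defn54} for surjectivity at the central fibre are also fine. The gap is in the crucial propagation step: you assert that ``classical Fredholm continuation'' carries the transversality from the central fibre (in stratum $\lambda_0$) to nearby points in a higher stratum $\lambda$, with the trivialization of Definition~\ref{defn53}(iii) doing the work. But that trivialization only identifies the \emph{domain bundles} away from nodes; it does not make the linearized operators on nearby fibres a small perturbation of the one at the center. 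In particular, when $\lambda$ resolves a node joining an instanton component ($v\in V_\Gamma^1$) to a holomorphic sphere component ($v\in V_\Gamma^\infty$), the resolved region carries the rescaled vortex equation \eqref{eqn23} with rescaling parameter $R\sim 1/|\zeta_e|\to\infty$, so the $R^2\sigma\,\mu(u)$ term blows up as the gluing parameter shrinks. The linearized operator in this adiabatic regime does not converge in any operator norm to the limit linearization, and hence openness of surjectivity is not a ``classical'' fact. This is precisely the point the paper isolates and handles separately.

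The paper's own argument proceeds by a dichotomy. If $\lambda$ resolves no node between an instanton component and a sphere component, the persistence of transversality is of the same nature as in stable holomorphic map theory and is taken as known. The remaining case is reduced to $\lambda_0$ being a codimension-two stratum (one sphere component, several instanton components) with only the top stratum above it, and is then handled by a compactness/contradiction argument: assume a sequence $u_i$ of smooth marked pointlike instantons converges to $p$ with transversality failing at each $u_i$, extract $L^{q,-\delta}$ cokernel elements $\eta_i$ of unit norm annihilated by $D_i^*$, use elliptic estimates to extract a subsequential limit over each component of $p$, and conclude that this limit lies in the cokernel of the central-fibre linearization — contradicting condition (f). You would need to replace your ``classical Fredholm continuation'' sentence with an argument of this adiabatic-limit compactness type (or a genuine adiabatic gluing estimate, which this paper is precisely designed to avoid). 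As written, your proof omits the single step that carries the real difficulty of the lemma.
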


\begin{proof}
First, from the construction we know that there is a minimal stratum $\lambda_0$ for which ${\mc M}_{\alpha, \lambda_0} \neq \emptyset$. Then by the transversality assumption, ${\mc M}_{\alpha, \lambda_0}$ is a topological manifold of the right dimension near $p$. To show that each stratum ${\mc M}_{\alpha, \lambda}$ is a topological manifold of the right dimension, it suffices to show that for any $\alpha$-thickened solution in the stratum $\lambda$ which is sufficiently close to $p$, the linearized operator is surjective modulo the obstruction spaces. Comparing with the lowest nonempty stratum $\lambda_0$, if the map type $\lambda$ does not resolve any node connecting an instanton component and a holomorphic sphere component, then the transversality at nearby solutions in ${\mc M}_{\alpha, \lambda}$ holds for the same reason as the case of stable holomorphic maps. Hence the problem is reduced to the case that $\lambda_0$ is a codimension two stratum where the only higher stratum nearby is the top stratum. Namely, $p$ is represented by a stable marked pointlike instanton with only one holomorphic sphere component and several instanton components. In this situation, we prove the claim via the argument by contradiction. Suppose on the contrary, there is a sequence of smooth (marked) pointlike instantons $u_i$ converging to $p$ and the linearized operators $D_i$ at $u_i$ are not transverse to the obstruction space. Then there exists a sequence $\eta_i\in L^{q, -\delta}$ which are in the $L^2$-orthogonal complement of the image of $D_i$ and the obstruction space $E_i \subset L^{p,\delta}\cap C_0^\infty$ (here $p$ and $q$ are conjugates, see notations from the appendix). Then $\eta_i$ are in the kernel of the adjoint operator $D_i^*$. Assuming that $\eta_i$ has $L^{q, -\delta}$-norm being $1$, then elliptic estimates shows that over each component of $p$, $\eta_i$ converges (subsequentially) to an element in the cokernel of the linearization. This contradicts the assumption the original solution is transverse to the obstruction space.  
\end{proof}

As ${\mc M}_\alpha$ is locally compact and Hausdorff, a neighborhood ${\bm U}_\alpha \subset {\mc M}_\alpha$ is normal. Then it follows from Lemma \ref{lemma511} and  that the tuple $C_\alpha = (U_\alpha, E_\alpha, S_\alpha, \psi_\alpha, F_\alpha)$ is an abstract chart of $\ov{\mc M}$ (after proper shrinking) whose strata are topological orbifolds of the right dimensions.

\subsection{Constructing an atlas}

Fix a degree $d \in H_2^K(V; {\mb Z})$. Then by compactness, there are only finitely many map types $\lambda$ such that 
\beqn
\emptyset \neq {\mc M}_S(V, d)_\lambda \subset \ov{\mc M}_S(V,d).
\eeqn
One orders these types {\it increasingly}, i.e., orders them as $\lambda_1, \ldots, \lambda_m$ such that 
\beqn
\lambda_k \leq \lambda_l \Longrightarrow k \leq l.
\eeqn
For $l = 1, \ldots, m$, define 
\beqn
\ov{\mc M}{}^{(l)}:= \bigcup_{\alpha \leq l} {\mc M}_S(V, d)_{\lambda_\alpha}.
\eeqn
Then $\ov{\mc M}{}^{(l)}$ is compact. 

Now we state the main induction hypothesis for the construction of an atlas on $\ov{\mc M}$.

\begin{center}

{\sc Induction Hypothesis}

\end{center}

Given $k \in \{1, \ldots, m\}$. There exist the following objects.

\begin{enumerate}

\item A set of points $p_1, \ldots, p_{n_k} \in \ov{\mc M}{}^{(k)}$ together with a choice of thickening data and a corresponding stratified charts
\beqn
C_{p_i}^k = (U_{p_i}^k, E_{p_i}^k, S_{p_i}^k, \psi_{p_i}^k, F_{p_i}^k).
\eeqn
For each nonempty subset $I \subset \{ p_1, \ldots, p_{n_k}\}$, define 
\beqn
F_I^k:= \bigcap_{p_i \in I} F_{p_i}^k.
\eeqn

\item A collection of stratified charts $C_I^k = (U_I^k, E_I^k, S_I^k, \psi_I^k, F_I^k)$ for all $I \in{\mc I}_k$ where
\beq\label{eqn58}
{\mc I}_k:= \left\{ \emptyset \neq I = \{p_{i_1}, \ldots, p_{i_s}\}\subset \{p_1, \ldots, p_{n_k}\} \ |\ F_I^k \neq \emptyset \right\}.
\eeq

\item Define a partial order $\preceq$ on ${\mc I}_I$ by inclusion. Then for each pair $I \preceq J$, there is a stratified coordinate change
\beqn
T_{JI}^k = (U_{JI}^k, \hat \phi_{JI}^k)
\eeqn
from $C_I^k$ to $C_J^k$.

\end{enumerate}

They must satisfy the following conditions. 
\begin{enumerate}
\item When $I = \{p_i\}$, $C_{\{p_i\}}^k = C_{p_i}^k$ and $p_i \in F_{p_i}^k$.

\item $\ov{\mc M}{}^{(k)}$ is contained in the union of $F_{p_i}^k$. 

\item For each $I \in {\mc I}_k$, the chart $C_I^k$ is the $\Gammait_I$-quotient of an open subset ${\bm U}_I^k$ of the $I$-thickened moduli space ${\mc M}_I$

\item The collection of coordinate changes $T_{JI}^k$ satisfy the identity and cocycle conditions. 
\end{enumerate}

Here end the induction hypothesis.

We can see that if the induction hypothesis is satisfied for $k = m$ (the number of all involved strata), then we obtain an atlas of $\ov{\mc M}$. Now we start the induction. Notice that the base case of the induction can be recovered from the induction step. Suppose for $k \leq m-1$, the induction hypothesis holds. %Then the good coordinate system ${\mc A}^{(k)}$ gives a virtual neighborhood
%\beqn
%|{\mc A}^{(k)}|:= \bigsqcup_{I \in {\mc I}_k} U_I^{(k)}/ \curlyvee.
%\eeqn
%It is a Hausdorff space. There is also the ``virtual bundle'' $|{\mc E}^{(k)}| \to |{\mc A}^{(k)}|$ together with a multivalued section 
%\beqn
%{\mf s}^{(k)}: |{\mc A}^{(k)}| \overset{m}{\to} |{\mc E}^{(k)}|.
%\eeqn
%Because $\lambda_k$ is not the top stratum yet, so we know by (P9) of the induction hypothesis  that there is an open neighborhood ${\mc W}^{(k)} \subset |{\mc A}^{(k)}|$ of the compact subset $\ov{\mc M}{}^{(k)}$ over which ${\mf s}^{(k)}$ is nowhere vanishing. 

\vspace{0.2cm}

\noindent \underline{\it Step One.} Constructing the basic charts.

\vspace{0.2cm}

First we need to find additional basic charts to cover the $(k+1)$-st stratum ${\mc M}_{\lambda_{k+1}}$. One can find finitely many points 
\beqn
p_{n_k+1}, \ldots, p_{n_{k+1}} \in {\mc M}_{\lambda_{k+1}}
\eeqn
together with thickening data $\alpha_{n_k+1},\ldots, \alpha_{n_{k+1}}$ and corresponding basic charts $C_{p_{n_k + 1}}$, $\ldots$, $C_{p_{n_{k+1}}}$ with footprints $F_{p_{n_k + 1}}$, $\ldots$, $F_{p_{n_{k+1}}}$ such that
\beqn
\ov{\mc M}{}^{(k+1)} = \ov{\mc M}{}^{(k)} \cup {\mc M}_{\lambda_{k+1}} \subset \bigcup_{1 \leq i \leq n_k} F_{p_i}^k \cup \bigcup_{n_k + 1 \leq i \leq n_{k+1}} F_{p_i}.
\eeqn
Then choose precompact open subsets $F_{p_i}^k \sqsubset F_{p_i}$ for all newly added $p_i$ such that 
\beqn
\ov{\mc M}{}^{(k+1)} \subset \bigcup_{1 \leq i \leq n_{k+1}} F_{p_i}^k.
\eeqn
Then one has the partially ordered set ${\mc I}_{k+1}$ defined by \eqref{eqn58} which contains ${\mc I}_k$. For each $I \in {\mc I}_{k+1}$, define
\beqn
F_I^k:= \bigcap_{p_i \in I} F_{p_i}^k \subset \ov{\mc M}.
\eeqn

\vspace{0.2cm}

\noindent \underline{\it Step Two.} Constructing the sum charts.

\vspace{0.2cm}

The set of points $\{p_1, \ldots, p_{n_{k+1}}\}$ induces the set ${\mc I}_{k+1}$ by the formula \eqref{eqn58}. For each $I \in {\mc I}_{k+1}$, we want to construct a ``sum chart'' from the thickened moduli space. Suppose $I = \{p_{i_1}, \ldots, p_{i_s}, q_1, \ldots, q_m\}$ such that
\beqn
I \cap {\mc M}_{\lambda_{k+1}} =  \{q_1,\ldots, q_m\}.
\eeqn
Then we consider the $I$-thickened moduli space ${\mc M}_I$ where by abuse of notation, $I$ also denotes the set of thickening data we chose for the involved points. By previous discussion, there is a $\Gammait_I$-action on ${\mc M}_I$, an equivariant maps 
\beqn
{\bm S}_I: {\mc M}_I \to {\bm E}_I
\eeqn
and a $\Gammait_I$-invariant map
\beqn
{\bm \psi}_I: {\bm S}_I^{-1}(0) \to \ov{\mc M}.
\eeqn
This produces a tuple $C_I = (U_I, E_I, S_I, \psi_I, F_I)$ which would become a chart on $\ov{\mc M}$ after additional ``trimming.''

\begin{lemma}
There exists an $\Gammait_I$-invariant open neighborhood ${\bm U}_I\subset {\mc M}_I$ of ${\bm \psi}_I^{-1}(\ov{F_I^k})$ such that the induced map 
\beqn
\psi_I: ({\bm U}_I \cap {\bm S}_I^{-1}(0))/ \Gammait_I \to \ov{\mc M}
\eeqn
is a homeomorphism onto an open neighborhood of $\ov{F_I^{k}}$ and such that each stratum ${\bm U}_{I, \lambda}$ is a topological manifold. 
\end{lemma}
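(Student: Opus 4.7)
The plan is to decompose the claim into three steps: first, a pointwise analysis of $\psi_I$ near each $\tilde p \in {\bm \psi}_I^{-1}(\ov{F_I^k})$, generalizing Lemma \ref{lemma511} from a single thickening datum to the collection $I$; second, a compactness/shrinking argument using that $\ov{F_I^k}$ is compact, $\Gammait_I$ is finite, and ${\mc M}_I$ is locally compact and Hausdorff; and third, the stratumwise topological manifold structure, which extends Lemma \ref{lemma512}.

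For the first step, I would fix $p \in \ov{F_I^k}$ with a representative $\tilde p \in {\bm \psi}_I^{-1}(p)$, and prove that $\psi_I$ is locally surjective and locally injective modulo $\Gammait_I$ near $\tilde p$. Surjectivity would follow the contradiction argument of Lemma \ref{lemma511}(b): given $p_n \to p$ with representatives $({\mc C}_n, {\bm u}_n)$ Gromov converging to the representative encoded by $\tilde p$, one manufactures an $I$-thickened solution over $({\mc C}_n, {\bm u}_n)$ by selecting, for each $\alpha \in I$ (which is possible because $p \in F_\alpha$), unique stabilizing markings ${\bm y}_\alpha^n \subset {\mc C}_n$ via transverse intersection with the submanifolds in ${\bm H}_\alpha$ — an intersection preserved for markings on vortex components by $C^\infty_{\rm loc}$ convergence, and for ghost markings attached to nonconstant holomorphic sphere components by the adiabatic derivative estimate of Lemma \ref{lemma211} — together with corresponding isomorphisms $\phi_\alpha^n$ into the universal family ${\mc U}_\alpha$. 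Local injectivity modulo $\Gammait_I$ would then follow exactly as in Lemma \ref{lemma511}: two thickened solutions over the same stable pointlike instanton differ only in orderings and markings, and the identity $\Gammait_\alpha = {\rm Aut}({\mc C}_\alpha, {\bm u}_\alpha) = {\rm Aut}({\mc C}_\alpha, {\bm y}_\alpha)$ built into Definition \ref{defn54} forces them into a common $\Gammait_I$-orbit.

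For the second step, I would apply the above to finitely many representatives $\tilde p_1, \ldots, \tilde p_N$ whose footprints cover $\ov{F_I^k}$, producing a $\Gammait_I$-invariant precompact neighborhood ${\bm U}_I^{(0)}$ of ${\bm \psi}_I^{-1}(\ov{F_I^k})$ on which $\psi_I$ is locally a homeomorphism modulo $\Gammait_I$. The set $Z := \overline{{\bm U}_I^{(0)}} \cap {\bm \psi}_I^{-1}(\ov{F_I^k})$ is compact; since each stratum of ${\mc M}_I$ is metrizable by Lemma \ref{lemma56}, Corollary \ref{cor58} supplies a sequence of shrinking neighborhoods $W_n \searrow Z$. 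To upgrade local injectivity to global injectivity on some $W_n$, I would argue by contradiction: if every $W_n$ fails injectivity, extract sequences $\tilde p_n^1, \tilde p_n^2 \in W_n$ in distinct $\Gammait_I$-orbits with ${\bm \psi}_I(\tilde p_n^1) = {\bm \psi}_I(\tilde p_n^2)$; by precompactness and the shrinking property, pass to convergent subsequences $\tilde p_n^i \to \tilde p_\infty^i \in Z$ mapping to the same point of $\ov{\mc M}$; local injectivity at $\tilde p_\infty^1$ places $\tilde p_\infty^1$ and $\tilde p_\infty^2$ in a common $\Gammait_I$-orbit, and continuity of the (finite) $\Gammait_I$-action then forces $\tilde p_n^1$ and $\tilde p_n^2$ into the same orbit for $n$ large, a contradiction. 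Take ${\bm U}_I$ to be the winning $W_n$, symmetrized by $\Gammait_I$ if necessary.

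Finally, each stratum ${\bm U}_{I,\lambda}$ is locally cut out by the $I$-thickened gauged Witten equation together with a prescribed vanishing pattern of gluing parameters, and the argument of Lemma \ref{lemma512} — transversality of the linearization modulo ${\bm E}_I$ at each center $\tilde p_i$, propagated to a neighborhood by semicontinuity of cokernels applied to the adjoint on weighted Sobolev spaces — endows it with the structure of a topological manifold of dimension ${\rm ind}(\lambda) + {\rm dim} {\bm E}_I$. The principal obstacle I anticipate is the simultaneous marking construction of Step~1 across all $\alpha \in I$ in the large-area adiabatic limit: for ghost markings living on rescaled vortex components attached to holomorphic sphere components, the persistence of transverse intersection with every ${\bm H}_\alpha$ at once relies delicately on Lemma \ref{lemma211} to control the component of $d(u_n \circ \varphi_{n,v})$ transverse to $G$-orbits uniformly as $R_n \to \infty$, and uses crucially the $K$-invariance requirement in Definition \ref{defn54}(g). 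Once this is handled, everything else reduces to point-set topology or a direct transcription of the single-datum arguments in Lemmas \ref{lemma511} and \ref{lemma512}.
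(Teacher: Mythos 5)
Your proposal is correct and follows precisely the route the paper intends: the paper's own proof simply defers to Lemmas \ref{lemma511} and \ref{lemma512} and leaves the details to the reader, and you have supplied exactly those details, generalizing the marking-selection/surjectivity argument, the universal-family/automorphism injectivity argument, and the stratumwise linearized transversality argument from a single thickening datum to the finite collection $I$. Your Step 2 shrinking device (via Corollary \ref{cor58}) to promote local to global injectivity is a sound way to handle the passage from a single center to the compact set $\ov{F_I^k}$, though one can also get global injectivity more directly as in Lemma \ref{lemma511} from the universal-family structure and the identity $\Gammait_\alpha = {\rm Aut}({\mc C}_\alpha,{\bm y}_\alpha)$; and the concern you flag about simultaneous transverse intersection across all $\alpha\in I$ is not a genuine difficulty, since the reparametrizations $\varphi_{n,v}$ come from a single Gromov-converging sequence and $I$ is finite, so one may take $n$ large enough to handle every $\alpha$ at once.
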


\begin{proof}
The proof is similar to that of Lemma \ref{lemma511} and Lemma \ref{lemma512}. We left the details to the reader. 
\end{proof}

Therefore, one can take a smaller $\Gammait_I$-invariant subset ${\bm U}_I^k \subset {\bm U}_I$ such that ${\bm S}_I^{-1}(0) \cap {\bm U}_I^k = {\bm \psi}_I^{-1}( F_I^k)$. This provides a chart 
\beqn
C_I^k = (U_I^k, E_I^k, S_I^k, \psi_I^k, F_I^k)
\eeqn
which is the $\Gammait_I$-quotient of $({\bm U}_I^k, {\bm E}_I^k, {\bm S}_I^k)$.

\vspace{0.2cm}

\noindent \underline{\it Step Three.} Constructing coordinate changes.

\vspace{0.2cm}

To construct coordinate changes, we first do a further shrinking on the basic charts. Consider an arbitrary collection of precompact open subset
\beqn
F_{p_i}^\bullet \sqsubset F_{p_i}^k,\ 1 \leq i \leq n_{k+1}
\eeqn
such that the union of $F_{p_i}^\bullet$ still cover $\ov{\mc M}{}^{(k+1)}$.\footnote{This is possible because each stratum of the moduli space is metrizable. See Remark \ref{rem57}.} Then
\beqn
F_I^\bullet:= \bigcap_{p_i \in I} F_{p_i}^\bullet \subset \bigcap_{p_i \in I} \ov{F_{p_i}^\bullet} \subset F_I^k.
\eeqn
For each pair $I, J \in {\mc I}_{k+1}$ with $I \subset J$ and $J \notin {\mc I}_k$, define
\begin{align*}
&\ {\bm E}_{JI}:= \bigoplus_{p_i\in J \setminus I} {\bm E}_{p_i},\ &\ \Gammait_{JI}:= \prod_{p_i \in J \setminus I} \Gammait_{p_i}.
\end{align*}
Define 
\beqn
{\bm S}_{JI}: {\mc M}_J \to {\bm E}_{JI}
\eeqn
to be the ${\bm E}_{JI}$-part of ${\bm S}_J$. Then there is a natural map 
\beqn
{\bm \psi}_{JI}: {\bm S}_{JI}^{-1}(0) \to {\mc M}_I
\eeqn
by forgetting ${\bm y}_\alpha, \phi_\alpha, e_\alpha$ for all $\alpha \in J \setminus I$ from the data. This is clearly equivariant with respect to the natural homomorphism $\Gammait_J \to \Gammait_I$ which annihilates $\Gammait_{JI}$, hence descends to a map 
\beqn
\psi_{JI}: {\bm S}_{JI}^{-1}(0)/ \Gammait_{JI} \to {\mc M}_I.
\eeqn

\begin{lemma}\label{lemma514}
There exists a $\Gammait_J$-invariant open neighborhood ${\bm N}_{JI} \subset {\mc M}_J$ of ${\bm \psi}_J^{-1}(\ov{F_J^\bullet})$ such that the map 
\beqn
\psi_{JI}: ( {\bm N}_{JI} \cap {\bm S}_{JI}^{-1}(0)) / \Gammait_{JI} \to {\mc M}_I
\eeqn
is a homeomorphism onto a $\Gammait_I$-invariant open neighborhood ${\bm U}_{JI}$ of ${\bm \psi}_I^{-1}( \ov{F_J^\bullet})\subset {\mc M}_I$. 
\end{lemma}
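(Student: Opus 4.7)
The plan is to view the forgetful map $\psi_{JI}$ as ``promotion'' from $I$-thickened to $J$-thickened solutions by adjoining the extra data indexed by $\alpha \in J \setminus I$: for each such $\alpha$, a stabilizing marking set ${\bm y}_\alpha$, an isomorphism $\phi_\alpha$ into the local universal unfolding ${\mc U}_\alpha \to {\mc V}_\alpha$, and an obstruction vector $e_\alpha \in {\bm E}_\alpha$ (which vanishes on ${\bm S}_{JI}^{-1}(0)$ by definition). I would prove the claim locally near each point of ${\bm \psi}_J^{-1}(\ov{F_J^\bullet})$ and then patch globally using compactness of $\ov{F_J^\bullet}$.

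For local injectivity modulo $\Gammait_{JI}$, suppose two elements of ${\bm S}_{JI}^{-1}(0)$ near a given base point are identified by ${\bm \psi}_{JI}$. After matching the common $I$-part of the data (which in particular matches the underlying stable pointlike instanton $({\mc C}, {\bm u})$), the argument from the injectivity step of Lemma \ref{lemma511} applies componentwise in $\alpha \in J \setminus I$: the markings ${\bm y}_\alpha$ are forced to coincide as unordered sets because they are locally determined as the unique transverse intersections of ${\bm u}$ with ${\bm H}_\alpha$, where on instanton components this uses the standard $K$-slice normalization, and on ghost components sitting over nontrivial holomorphic spheres it uses the adiabatic-limit transversality of Lemma \ref{lemma211}. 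Once ${\bm y}_\alpha$ agrees, the isomorphism $\phi_\alpha$ into ${\mc U}_\alpha$ is determined up to the action of ${\rm Aut}({\mc C}_\alpha,{\bm y}_\alpha) = \Gammait_\alpha$ by the universal property of the unfolding, and the condition $e_\alpha = 0$ is $\Gammait_\alpha$-invariant. Assembling over all $\alpha \in J\setminus I$ gives precisely the stated $\Gammait_{JI}$-ambiguity.

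For local surjectivity onto an open neighborhood of ${\bm \psi}_I^{-1}(\ov{F_J^\bullet})$, I would run the ``promotion'' construction of Lemma \ref{lemma511}(b) componentwise: given an $I$-thickened solution sufficiently close to a chosen base point in ${\bm \psi}_I^{-1}(\ov{F_J^\bullet})$, each ${\bm H}_\alpha$ is intersected transversely (using Lemma \ref{lemma211} in the adiabatic-limit case) in a unique nearby collection of points, from which one reads off ${\bm y}_\alpha$, invokes the local universality of ${\mc U}_\alpha$ to produce $\phi_\alpha$, and sets $e_\alpha = 0$. Continuity of this promotion, as a map from a neighborhood in ${\mc M}_I$ into ${\bm S}_{JI}^{-1}(0) \subset {\mc M}_J$, follows from smooth dependence of transverse intersections on parameters together with the Gromov-type topology on thickened moduli spaces. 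To globalize, I would cover the compact set ${\bm \psi}_J^{-1}(\ov{F_J^\bullet})/\Gammait_J$ by finitely many such local promotion domains, take ${\bm N}_{JI}$ to be a $\Gammait_J$-invariant open thickening of the union of their preimages in ${\mc M}_J$, shrink so that $\psi_{JI}$ restricts to a continuous bijection onto its image ${\bm U}_{JI}$, and finally upgrade the bijection to a homeomorphism by the standard point-set argument (local compactness and Hausdorffness of ${\mc M}_I$ and ${\mc M}_J$, as established in Lemma \ref{lemma511}(a)).

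The main obstacle I anticipate is bookkeeping the equivariance. The forgetful map ${\bm \psi}_{JI}$ collapses the $\Gammait_{JI}$-factor of $\Gammait_J$, so one must arrange the local promotions compatibly with the residual $\Gammait_I$-action on the $I$-thickened data and then verify that the patched union is genuinely $\Gammait_J$-invariant rather than only $\Gammait_{JI}$-invariant. A second subtler point is the promotion of markings that land on ghost instanton components attached to nontrivial holomorphic sphere components (cf. Figure \ref{figure1}): uniqueness and continuity of the intersection with the descended hypersurface $\ov{H}_{\alpha,y} \subset X$ both hinge on the convergence $d\mu(u_i)(u_{i,s}), d\mu(u_i)(Ju_{i,s}) \to 0$ from Lemma \ref{lemma211}, so the adiabatic-limit analysis of Section \ref{section2} must be threaded carefully through the promotion step.
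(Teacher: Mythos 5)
Your proposal is correct and follows essentially the same route as the paper, whose proof simply defers to the argument of Lemma \ref{lemma511} applied componentwise for each $\alpha \in J \setminus I$ (surjectivity via ``promotion'' by intersecting with the hypersurfaces ${\bm H}_\alpha$ and invoking local universality of ${\mc U}_\alpha$, injectivity modulo $\Gammait_{JI}$ via uniqueness of transverse intersections and the $\Gammait_\alpha$-ambiguity of $\phi_\alpha$). You have unpacked that terse reference in the right way, correctly identifying the residual $\Gammait_{JI}$-ambiguity and flagging the adiabatic-limit subtlety from Lemma \ref{lemma211} for markings on ghost components.
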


\begin{proof}
It is similar to the proof of Lemma \ref{lemma511} that the image of the map ${\bm \psi}_{JI}$ contains an open neighborhood ${\bm U}_{JI} \subset {\bm U}_I$ of ${\bm \psi}_I^{-1}(\ov{F_J^\bullet})$. It is also similar that when restricting to a small neighborhood of ${\bm \psi}_J^{-1}( \ov{F_J^k})$, the quotient of ${\bm \psi}_{JI}$,
\beqn
\psi_{JI}: ({\bm N}_{JI} \cap {\bm S}_{JI}^{-1}(0))/ \Gammait_{JI} \to {\mc M}_I
\eeqn
is injective. Therefore, $\psi_{JI}$ is a homeomorphism.
\end{proof}

Now we formally declare what the charts and the coordinate changes on the $k+1$-st stage are. We first need to shrink the charts $C_I^k$. For all $I \in {\mc I}_k$, we can shrink $C_I^k$ to a chart $C_I^\bullet$ with footprint being $F_I^\bullet$. For all $J \in {\mc I}_{k+1} \setminus {\mc I}_k$, we can choose a $\Gammait_J$-invariant open subset
\beq\label{eqn59}
{\bm U}_J^\bullet \subset \bigcap_{I \preceq J} {\bm N}_{JI} \subset {\mc M}_J
\eeq
(where ${\bm N}_{JI}$ is granted by Lemma \ref{lemma514}) such that 
\beqn
{\bm U}_J^\bullet\cap {\bm S}_J^{-1}(0) = {\bm \psi}_J^{-1}(F_J^\bullet).
\eeqn

Now we define the coordinate changes. Consider a pair $I \preceq J$. If $J \in {\mc I}_k$ (implying $I \in {\mc I}_k$), define the coordinate change $T_{JI}^\bullet$ from $C_I^\bullet$ to $C_J^\bullet$ as the restriction of $T_{JI}^k$. If $J \in {\mc I}_{k+1}\setminus {\mc I}_k$, define 
\beqn
U_{JI}:= {\bm U}_{JI} /\Gammait_I \subset U_I^k
\eeqn
(where ${\bm U}_{JI}$ is given by Lemma \ref{lemma514}) and define
\beqn
\phi_{JI}: U_{JI} \to U_J^k
\eeqn
which is induced from the inverse of $\psi_{JI}$ given by Lemma \ref{lemma514} above. This is a topological embedding. Then define
\beq\label{eqn510}
U_{JI}^\bullet:= U_{JI} \cap U_I^\bullet \cap \phi_{JI}^{-1}(U_J^\bullet)
\eeq
which is the $\Gammait_I$-quotient of an open subset ${\bm U}_{JI}^\bullet \subset {\bm U}_I^\bullet$. Using the natural inclusion ${\bm E}_I \hookrightarrow {\bm E}_J$, one obtains a topological embedding 
\beqn
\hat\phi_{JI}^\bullet: ( {\bm U}_{JI}^\bullet \times {\bm E}_I)/\Gammait_I \to ({\bm U}_J^\bullet \times {\bm E}_J)/\Gammait_J.
\eeqn

\begin{lemma}
$T_{JI}^\bullet = (U_{JI}^\bullet, \hat\phi_{JI}^\bullet)$ is a coordinate change from $C_I^\bullet$ to $C_J^\bullet$.
\end{lemma}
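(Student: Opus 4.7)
The plan is to verify, one by one, the four clauses of Definition~\ref{defn42} for the pair $(U_{JI}^\bullet, \hat\phi_{JI}^\bullet)$ viewed as a candidate coordinate change $C_I^\bullet \to C_J^\bullet$. The case $J \in {\mc I}_k$ is immediate: $T_{JI}^\bullet$ is simply the restriction of the previously constructed $T_{JI}^k$, whose coordinate change properties are part of the induction hypothesis. So the substantive work is concentrated in the case $J \in {\mc I}_{k+1} \setminus {\mc I}_k$, where $\phi_{JI}$ is freshly constructed as the quotient of the inverse of the homeomorphism supplied by Lemma~\ref{lemma514}. I shall focus on this case.

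For the bundle map condition, the underlying base map $\phi_{JI}^\bullet$ is a topological embedding because it restricts a homeomorphism to an open subset and lands in $U_J^\bullet$ by \eqref{eqn510}. The fibrewise map is the linear inclusion ${\bm E}_I \hookrightarrow {\bm E}_J = {\bm E}_I \oplus {\bm E}_{JI}$, which is equivariant for the surjection $\Gammait_J \twoheadrightarrow \Gammait_I$ and hence descends to a continuous bundle map on the quotients. Commutativity with the zero sections is automatic. For commutativity with the Kuranishi sections, observe that $\phi_{JI}^\bullet$ is obtained from $\psi_{JI}$, whose domain consists precisely of points in ${\mc M}_J$ whose extra obstruction vector $(e_\alpha)_{\alpha \in J \setminus I}$ vanishes; thus a representative of $\phi_{JI}^\bullet([p])$ augments a representative of $[p]$ with the auxiliary markings and isomorphisms produced by the argument of Lemma~\ref{lemma511}, together with the zero vector in ${\bm E}_{JI}$. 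Consequently $S_J \circ \phi_{JI}^\bullet = \hat\phi_{JI}^\bullet \circ S_I$.

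Condition~(b) (compatibility with the footprint maps) is a tautology, since both $\psi_I$ and $\psi_J$ merely extract the underlying gauge equivalence class of stable pointlike instanton. For condition~(c), the inclusion $\psi_I(U_{JI}^\bullet \cap S_I^{-1}(0)) \subset F_I^\bullet \cap F_J^\bullet$ follows from $U_{JI}^\bullet \subset U_I^\bullet$ together with $\phi_{JI}^\bullet(U_{JI}^\bullet) \subset U_J^\bullet$ and the identity ${\bm U}_J^\bullet \cap {\bm S}_J^{-1}(0) = {\bm \psi}_J^{-1}(F_J^\bullet)$. Conversely, any $[p] \in U_I^\bullet \cap S_I^{-1}(0)$ with $\psi_I([p]) \in F_J^\bullet$ lies in the domain $U_{JI}$ of $\phi_{JI}$ by Lemma~\ref{lemma514}, and satisfies $\psi_J(\phi_{JI}([p])) = \psi_I([p]) \in F_J^\bullet$, so $\phi_{JI}([p]) \in {\bm \psi}_J^{-1}(F_J^\bullet) \subset U_J^\bullet$; by \eqref{eqn510} this places $[p] \in U_{JI}^\bullet$.

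The main obstacle is the closed graph condition~(d). Suppose $x_n \in U_{JI}^\bullet$ converges to $x_\infty \in U_I^\bullet$ and $y_n := \phi_{JI}^\bullet(x_n) \to y_\infty \in U_J^\bullet$. By the prescription \eqref{eqn59}, the open set $U_J^\bullet$ is contained in ${\bm N}_{JI}/\Gammait_J$, where Lemma~\ref{lemma514} guarantees that $\psi_{JI}$ is a homeomorphism; continuity of $\psi_{JI}$ then yields $\psi_{JI}(y_\infty) = \lim \psi_{JI}(y_n) = \lim x_n = x_\infty$, so in particular $x_\infty \in U_{JI}$ and $\phi_{JI}(x_\infty) = y_\infty \in U_J^\bullet$. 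Combined with $x_\infty \in U_I^\bullet$, the definition \eqref{eqn510} gives $x_\infty \in U_{JI}^\bullet$ as desired. The subtlety here, and the reason for the uniform shrinking in \eqref{eqn59}, is that one must have $U_J^\bullet$ simultaneously inside the domain of $\psi_{JI}$ for \emph{every} $I \preceq J$; this is precisely what is arranged by taking ${\bm U}_J^\bullet \subset \bigcap_{I \preceq J} {\bm N}_{JI}$.
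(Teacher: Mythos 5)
Your proof is correct and follows essentially the same route as the paper: conditions (a)--(c) are automatic from the construction, and the substance is the closed-graph condition (d), handled by pushing the limit $y_\infty$ back through $\psi_{JI}$. One small omission worth fixing: Lemma~\ref{lemma514} gives $\psi_{JI}$ as a homeomorphism on $({\bm N}_{JI} \cap {\bm S}_{JI}^{-1}(0))/\Gammait_{JI}$, not on ${\bm N}_{JI}/\Gammait_J$, so before invoking continuity of $\psi_{JI}$ at $y_\infty$ you must also note that $y_\infty \in {\bm S}_{JI}^{-1}(0)$; this follows because each $y_n = \phi_{JI}^\bullet(x_n)$ lies in the closed set ${\bm S}_{JI}^{-1}(0)$, and it is precisely the first step of the paper's own argument.
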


\begin{proof}
Conditions (a), (b), and (c) of Definition \ref{defn42} are true automatically. To verify condition (d), take $x_n \in U_{JI}^\bullet$ and $y_n = \phi_{JI}^\bullet(x_n) \in U_J^\bullet$ and assume that $x_n \to x_\infty \in U_I^\bullet$, $y_n \to y_\infty \in U_J^\bullet$. Because $y_n$ is a $\Gammait_J$-orbit of $J$-thickened solutions contained in the locus ${\bm S}_{JI}^{-1}(0)$, $y_\infty$ is also contained in the locus ${\bm S}_{JI}^{-1}(0)$. Moreover, by \eqref{eqn59}, we see $y_\infty$ is contained in ${\bm N}_{JI}$. Then by Lemma \ref{lemma514}, we have $y_\infty \in \phi_{JI}(U_{JI})$. Therefore, by \eqref{eqn510}, we see $x_\infty \in U_{JI}^\bullet$ and hence $y_\infty = \phi_{JI}^\bullet( x_\infty)$. This proves condition (d) of Definition \ref{defn42}.
\end{proof}

Then the induction can be continuated if we replace all script $\bullet$ by the index $k+1$.

\subsection{Constructing a good coordinate system}\label{subsection56}

Suppose we have finished the last ($m$-th) step of the above induction. For the objects obtained, we remove the superscript $k = m$. In particular, we obtained an atlas ${\mc A}$ on $\ov{\mc M}$ whose charts are indexed by ${\mc I}$. To obtain a good coordinate system, we perform a typical shrinking argument as follows. 

Suppose we have chosen $n$ basic charts $C_{p_i}$, $i = 1, \ldots, n$. For each $p_i$, choose precompact shrinkings
\beq\label{eqn511}
\mathring{F}_{p_i;0} \sqsubset \mathring{G}_{p_i; 1} \sqsubset \mathring{F}_{p_i;1} \sqsubset \cdots \sqsubset \mathring{G}_{p_i; n} \sqsubset \mathring{F}_{p_i; n} \sqsubset F_{p_i}\footnote{There is a small difference from the counterpart in the proof of \cite[Lemma 5.3.1]{MW_3}, which took $F_{p_i; n_{k+1}}^{k} = F_{p_i}^{k}$. }
\eeq
such that the union of $\mathring{F}_{p_i; 0}$ covers $\ov{\mc M}$. Then define for all $I \in {\mc I}$
\beqn
\mathring{F}_I:= \left( \bigcap_{p_i \in I} \mathring{G}_{p_i; |I|} \right) \setminus \left( \bigcup_{p_i \notin I} \ov{\mathring{F}_{p_i; |I|}} \right).
\eeqn

\begin{lemma}\label{lemma516}
The collection $\{ \mathring{F}_I\ |\ I \in {\mc I} \}$ of open sets satisfies the following conditions. 

\begin{enumerate}

\item Its union covers $\ov{\mc M}$;

\item It satisfies the overlapping condition, i.e.
\beqn
\ov{\mathring{F}_I} \cap \ov{\mathring{F}_J} \neq \emptyset \Longrightarrow I \preceq J\ {\rm or}\ J \preceq I.
\eeqn
\end{enumerate}
\end{lemma}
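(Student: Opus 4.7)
The plan is to deduce both (a) and (b) from the tower of precompact shrinkings \eqref{eqn511} by elementary set-theoretic arguments; the auxiliary $\mathring G$-layers are precisely what let one convert closure-membership into open-neighbourhood membership, and this is what drives both statements.

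For (a), I would fix $x\in\ov{\mc M}$ and consider $J_k(x):=\{p_i : x\in\ov{\mathring{F}_{p_i;k}}\}$ together with $f(k):=|J_k(x)|$ for $k=0,\dots,n$. Since the shrinkings are nested, $f$ is non-decreasing; since $\bigcup_i\mathring{F}_{p_i;0}$ covers $\ov{\mc M}$, $f(0)\ge 1$; and trivially $f(n)\le n$. The function $g(k):=f(k)-k$ is therefore positive at $k=0$, non-positive at $k=n$, and decreases by at most one per step, so a discrete intermediate value argument will produce the smallest $k^{*}\in\{1,\dots,n\}$ with $f(k^{*})=k^{*}$. Minimality forces $f(k^{*}-1)\ge k^{*}$ while monotonicity forces $f(k^{*}-1)\le k^{*}$, so $J_{k^{*}-1}(x)=J_{k^{*}}(x)=:I$ with $|I|=k^{*}$. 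For $p_i\in I$ the precompact inclusion $\mathring{F}_{p_i;k^{*}-1}\sqsubset\mathring{G}_{p_i;k^{*}}$ will then give $x\in\ov{\mathring{F}_{p_i;k^{*}-1}}\subset\mathring{G}_{p_i;k^{*}}=\mathring{G}_{p_i;|I|}$, while for $p_j\notin I=J_{k^{*}}(x)$ I have $x\notin\ov{\mathring{F}_{p_j;|I|}}$ by definition; the defining formula for $\mathring{F}_I$ then reads off $x\in\mathring{F}_I$.

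For (b), I would assume $x\in\ov{\mathring{F}_I}\cap\ov{\mathring{F}_J}$ with $|I|\le|J|$ and, arguing by contradiction, pick some $p_i\in I\setminus J$. From the chain $\ov{\mathring{F}_I}\subseteq\ov{\mathring{G}_{p_i;|I|}}\subset\mathring{F}_{p_i;|I|}\subseteq\mathring{F}_{p_i;|J|}$ one sees that $x$ lies in the open set $\mathring{F}_{p_i;|J|}\subseteq\ov{\mathring{F}_{p_i;|J|}}$. Then $x\in\ov{\mathring{F}_J}$ will force the open neighbourhood $\mathring{F}_{p_i;|J|}$ of $x$ to meet $\mathring{F}_J$; but since $p_i\notin J$, the defining formula for $\mathring{F}_J$ gives $\mathring{F}_J\cap\ov{\mathring{F}_{p_i;|J|}}=\emptyset$, which is the desired contradiction. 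Hence $I\subseteq J$, and in particular $I\preceq J$.

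The main point requiring care is that $|I|$, rather than a single fixed index, controls which level of the shrinking tower enters the definition of $\mathring{F}_I$; this is exactly what forces the introduction of the cardinality function $f$ in part (a) and the comparison between $|I|$ and $|J|$ in part (b). I do not anticipate any deeper obstacle beyond this bookkeeping.
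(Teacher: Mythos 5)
Your proof is correct. The paper itself defers this lemma to \cite[Lemma 5.3.1]{MW_3}, and your argument --- the counting function $f(k)=|J_k(x)|$ with the discrete intermediate-value step to locate the level $k^*=|I|$ in part (a), and the comparison of $|I|$ with $|J|$ through the shrinking tower in part (b) --- is precisely the standard McDuff--Wehrheim argument behind that citation.
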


\begin{proof}
See the proof of \cite[Lemma 5.3.1]{MW_3}. 
\end{proof}

Now we can construct a good coordinate system $\mathring{\mc A}$. By definition $\mathring{F}_I$ is a precompact open subset of $F_I$. Then one can find precompact open subsets 
\beqn
\mathring{U}_I \sqsubset U_I,\ \forall I \in {\mc I}
\eeqn
such that the induced shrinking $\mathring{C}_I$ of $C_I$ has footprint $\mathring{F}_I$. This gives us a collection of charts $\mathring{C}_I$ and a collection of induced coordinate changes $\mathring{T}_{JI}$ from $\mathring{C}_I$ to $\mathring{C}_J$ whenever $I \preceq J$. Then all conditions of Definition \ref{defn43} are satisfied. Hence $\mathring{\mc A}$ is an atlas. 

Then, by Lemma \ref{lemma521} one can shrink this atlas to a good coordinate system, which is still denoted by $\mathring{\mc A}$ such that each footprint is a slight precompact shrinking of $\mathring{F}_I$. This can be done by slightly enlarging $\mathring{F}_{p_i; l}$ and slightly shrinking $\mathring{G}_{p_i; l}$ while preserving \eqref{eqn511}. Moreover, as the charts are obtained from existing ones via precompact shrinkings, we may assume that the domains $\mathring{U}_I$ resp. $\mathring{U}_{JI}$ have natural compactifications $\ov{\mathring{U}_I}$ resp. $\ov{\mathring{U}_{JI}}$ such that the coordinate changes extend to continuous maps 
\beqn
\mathring{\phi}_{JI}: \ov{\mathring{U}_{JI}} \to \ov{\mathring{U}_J}.
\eeqn 
satisfying the cocycle condition. 

\subsection{Constructing perturbations}

Now we construct perturbations. We assume that we have obtained a good coordinate system ${\mc A}$  over $\ov{\mc M}$, whose charts $C_I = (U_I, E_I, S_I, \psi_I, F_I)$ are indexed by $I \in {\mc I}$. To construct perturbations, it is convenient to choose a shrinking ${\mc A}'$ of ${\mc A}$, induced by a precompact shrinking 
\beqn
U_I' \sqsubset U_I
\eeqn
for all $I \in {\mc I}$. We also need to make extra choices to help us construct suitable perturbations. 

\begin{enumerate}

\item Choose $\Gammait_I$-invariant metrics on the vector spaces ${\bm E}_I$ such that the inclusions ${\bm E}_I \to {\bm E}_J$ are all isometric. 

\item Choose, for each pair $I, J \in {\mc I}$ with $I \preceq J$, an open neighborhood $N_{JI}$ of $\phi_{JI}(U_{JI})$ such that

\begin{enumerate}
\item For any pair $I, J$ with $I \preceq J$, 
\beqn
S_{J \setminus I}^{-1}(0) \cap N_{JI} = \phi_{JI}(U_{JI})
\eeqn

\item For any triple $I, J, K$ such that $I \preceq K$, $J \preceq K$, and there is no partial order relation between $I$ and $J$, there holds $N_{KI} \cap N_{KJ} = \emptyset$. 

\end{enumerate}

\item Choose an open neighborhood $N_{JI}' \subset U_J'$ of $\phi_{JI}(U_{JI}')$ such that $\ov{N_{JI}'}\subset N_{JI}$ is compact. 
\end{enumerate}

Now we state how to construct transverse perturbations over the top stratum.

\begin{lemma}
For any sufficiently small $\epsilon>0$, there exist a collection of $\Gammait_I$-liftable multisections $t_I : U_I \overset{m}{\to} E_I$ for all $I \in {\mc I}$ satisfying the following conditions. 
\begin{enumerate}

\item For all $I \preceq J$, $t_J \circ \phi_{JI}$ and $\hat \phi_{JI} \circ t_I$ agrees in the strong sense over a neighborhood of $\ov{U_I'} \cap \phi_{JI}^{-1}(\ov{U_J'})$.

\item For all $I \preceq J$, over a neighborhood of $\ov{N_{JI}'}$ the value of $t_J$ is contained in the subbundle $E_I$. 

\item Define $\hat S_I:= S_I + t_I$. Then $\hat S_I$ is transverse over each stratum near $\ov{U_J'}$. 

\item For every $I$ one has $\| t_I \|_{C^0(\ov{U_I'})} < \epsilon$.

\end{enumerate}
\end{lemma}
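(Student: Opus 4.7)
The plan is to construct the multisections $t_I$ by induction on $I \in {\mc I}$, ordered by a linear extension of the partial order $\preceq$. For the base case, when $I$ is minimal, $U_I$ has no ``lower'' charts to be compatible with, so one directly applies Lemma \ref{lemma410} to the zero multisection to produce a $\Gammait_I$-liftable continuous multisection $t_I$ of $E_I$ which is transverse to zero on each stratum of a neighborhood of $\ov{U_I'}$ and of $C^0$-norm less than $\epsilon / 2^{|{\mc I}|}$, say. Here transversality makes sense because $C_I$ is a $\Gammait_I$-quotient of $({\bm U}_I, {\bm E}_I, {\bm S}_I)$ with ${\bm U}_I$ stratified by topological manifolds (Lemma \ref{lemma512}) and ${\bm E}_I$ a vector space. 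Condition (b) is vacuous here and (a), (d) hold trivially.

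For the inductive step, assume $t_I$ has been constructed for all $I \prec J$ with all four conditions satisfied among those indices. First, define $t_J$ on a neighborhood of $\bigcup_{I \prec J} \ov{N_{JI}'}$: by condition 2(b) on the choice of the $N_{KI}$'s, the sets $N_{JI}$ for distinct $I, I'$ which are incomparable are disjoint, and on the remaining portion where $I \preceq I'$ the cocycle condition on the coordinate changes $\hat\phi_{JI} = \hat\phi_{JI'}\circ \hat\phi_{I'I}$ ensures that the prescriptions from different $I$'s are consistent in the strong sense. Concretely, on a small neighborhood of $\ov{N_{JI}'}$ set
\beqn
t_J := \hat\phi_{JI}\circ t_I \circ \phi_{JI}^{-1},
\eeqn
which is a well-defined $\Gammait_J$-liftable multisection taking values in the subbundle $E_I \subset E_J$, giving condition (b) automatically. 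By the inductive transversality assumption and the fact that the coordinate changes are bundle embeddings, $S_J + t_J$ is transverse on each stratum of this partial neighborhood.

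Next, extend $t_J$ to all of $U_J$ using Lemma \ref{lemma410}. Take $Z$ to be a closed neighborhood of $\bigcup_{I \prec J} \ov{N_{JI}'}$ on which $t_J$ is already defined and $S_J + t_J$ is stratified-transverse, and take $W = \ov{U_J'}$. The lemma produces a $\Gammait_J$-liftable continuous multisection $t_J'$ on all of $U_J$ which agrees with $t_J$ in the strong sense near $Z$ and makes $S_J + t_J'$ transverse on each stratum near $\ov{U_J'}$, with the $C^0$-bound $\| t_J' \|_{C^0(\ov{U_J'})} \leq \| t_J \|_{C^0(Z \cap \ov{U_J'})} + \epsilon/2^{|{\mc I}|}$. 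By iterating the triangle inequality through the induction, the total $C^0$-norm remains bounded by $\epsilon$ as required in (d), and replacing $t_J$ by $t_J'$ closes the induction.

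The main technical obstacle is ensuring that the prescription of $t_J$ on the union of the neighborhoods $N_{JI}'$ (for various $I \prec J$) is unambiguous and jointly $\Gammait_J$-liftable. This is precisely what condition 2 on the $N_{JI}$ is designed to handle: incomparable pairs have disjoint neighborhoods, and comparable pairs are reconciled by the cocycle condition in Definition \ref{defn43}. One must also check that the extension step of Lemma \ref{lemma410}, which a priori is stated for a single chart, can be applied consistently with the $\Gammait_J$-action and preserves the strong-sense agreement on $Z$; since $Z$ contains a neighborhood of the previously-built part, the extension there is literally equal to what was prescribed, so conditions (a) and (b) persist after extension. Finally, choosing the constants at each inductive stage as $\epsilon \cdot 2^{-|{\mc I}|}$ (or any summable bound) guarantees the global $C^0$-estimate in (d).
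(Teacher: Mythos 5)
Your overall strategy --- a linear--ordered induction over ${\mc I}$, building $t_J$ by first prescribing it on the overlap regions and then invoking Lemma \ref{lemma410} to extend --- is exactly the approach the paper intends; the paper's own proof is essentially a reference to \cite[Theorem 7.37]{Tian_Xu_geometric} together with the remark that Lemma \ref{lemma410} replaces the usual extension theorem. So the architecture is right. But there is a concrete gap in the inductive step.

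The formula $t_J := \hat\phi_{JI}\circ t_I \circ \phi_{JI}^{-1}$ is only well-defined on $\phi_{JI}(U_{JI})$, the image of the coordinate change; it does not make sense on a neighborhood of $\ov{N_{JI}'}$ as you claim. By construction $N_{JI}'$ is an open neighborhood of $\phi_{JI}(U_{JI}')$ inside $U_J$, and by condition 2(a) of the setup one has $\phi_{JI}(U_{JI}) = S_{J\setminus I}^{-1}(0)\cap N_{JI}$, a locally closed, generically lower-dimensional subset of $N_{JI}$, so $\phi_{JI}^{-1}$ has no meaning off this subset. Consequently, when you later take $Z$ to be a closed neighborhood of $\bigcup_{I\prec J}\ov{N_{JI}'}$ and apply Lemma \ref{lemma410}, the section $t_J$ is not actually defined on $Z$ --- only on the images $\phi_{JI}(U_{JI})$. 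What is missing is an intermediate extension step: extend the pushed-forward multisection from $\phi_{JI}(U_{JI})$ to a full neighborhood of $\ov{N_{JI}'}$ \emph{while remaining $E_I$-valued}, e.g.\ via Tietze in the sub-bundle $E_I$ using normality of $U_J$; this is also precisely what makes condition (b) hold, and it cannot be produced by Lemma \ref{lemma410}, which extends sections of the full bundle $E_J$ with no sub-bundle constraint. Once this $E_I$-valued extension is in place, $S_J+t_J$ is transverse over all of $N_{JI}'$ (not merely on $\phi_{JI}(U_{JI})$) because $t_J$ being $E_I$-valued lets you split $S_J+t_J=(S_I+t_J,\,S_{J\setminus I})$, and the complementary component $S_{J\setminus I}$ is transverse near $\phi_{JI}(U_{JI})$ by the construction of the thickened moduli space; this deduction is also elided in your write-up. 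With these two points supplied, the final application of Lemma \ref{lemma410} and the $C^0$-bookkeeping go through as you describe.
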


\begin{proof}
The proof is very similar to the traditional case of constructing perturbations on a good coordinate system (see \cite[Theorem 7.37]{Tian_Xu_geometric}). We need to use the stratified version of the transversality extension theorem, which is Lemma \ref{lemma410}, in the induction procedure. We omit the details.
\end{proof}

We also need to control the compactness of the perturbed zero locus. Notice that the collection $t_I$ gives a multivalued perturbation $\hat S_I':= S_I' + t_I'$ on the good coordinate system ${\mc A}'$. Its zero locus $|(\hat S_I')^{-1}(0)|$ is a subset of the corresponding virtual neighborhood $|{\mc A}'|$. 

\begin{lemma}
For $\epsilon$ sufficiently small, $|(\hat S_I')^{-1}(0)|$ is sequentially compact in $|{\mc A}'|$. 
\end{lemma}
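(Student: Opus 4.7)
The plan is a Gromov-type compactness argument that uses (i) the precompactness $U_I' \sqsubset U_I$, (ii) the finiteness of $\mc{I}$ and of the branches of each multisection, and (iii) the covering and overlapping conditions of a good coordinate system.

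Given a sequence $[y_n] \in |(\hat S_I')^{-1}(0)|$, I would first use finiteness to pass to a subsequence with representatives $x_n \in U_{I_0}'$ for one fixed index $I_0 \in \mc{I}$ and one fixed branch $t_{I_0}^{(i)}$ of $t_{I_0}$ realizing $S_{I_0}(x_n) + t_{I_0}^{(i)}(x_n) = 0$. Since $\ov{U_{I_0}'} \subset U_{I_0}$ is compact by the precompact shrinking construction of Subsection \ref{subsection56}, a further subsequence yields $x_n \to x_\infty \in \ov{U_{I_0}'}$, and continuity of the $i$-th branch gives $\hat S_{I_0}^{(i)}(x_\infty) = 0$. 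If $x_\infty \in U_{I_0}'$, the topological embedding $U_{I_0}' \hookrightarrow |\mc{A}'|$ guaranteed by the definition of good coordinate system immediately yields $[x_n] \to [x_\infty]$ in $|\mc{A}'|$, and we are done.

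The principal case is $x_\infty \in \ov{U_{I_0}'} \setminus U_{I_0}'$. Here I would consider the underlying gauge equivalence classes of stable $I_0$-thickened configurations represented by $x_n$. These solve the $I_0$-perturbed gauged Witten equation with inhomogeneous term of $C^0$-norm at most $\epsilon$; for $\epsilon$ sufficiently small, a direct adaptation of the Ziltener-type compactness theorem of Section \ref{section3} (together with Proposition \ref{prop28}) produces a subsequence whose underlying configurations Gromov-converge to a stable pointlike instanton $p_\infty \in \ov{\mc{M}}$. By the covering condition $\ov{\mc{M}} = \bigcup_{J \in \mc{I}} F_J'$ we have $p_\infty \in F_{J_0}'$ for some $J_0$, and the overlapping condition forces $J_0$ to be comparable with $I_0$; the extension of the coordinate changes to the natural compactifications recorded at the end of Subsection \ref{subsection56} then transports $x_\infty$ to a point $x_\infty' \in U_{J_0}'$ with $[x_\infty'] = [x_\infty]$ in $|\mc{A}'|$, yielding the required convergent subsequence.

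The main obstacle is the Gromov compactness for approximate solutions: one must pick $\epsilon_0 > 0$ small enough that the inhomogeneity $t_{I_0}$ cannot generate spurious limiting configurations missing from $\ov{\mc{M}}$. Because every configuration carries the fixed degree $d$, and hence a uniform energy bound, and because the obstruction sections $\iota_\alpha(e_\alpha)$ are compactly supported in the semistable locus $V^{\rm ss}$ by Definition \ref{defn54}, this is a routine perturbation of the unperturbed compactness argument of Section \ref{section3}; the $C^0$-smallness of $\epsilon$ together with uniform local bounds on derivatives forces the perturbed zeros to accumulate near genuine zeros, after which the chart-switching is handled cleanly by the overlapping condition.
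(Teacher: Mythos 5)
The first half of your argument---passing to a fixed chart $U_{I_0}'$ and a fixed branch, extracting a convergent subsequence $x_n \to x_\infty \in \ov{U_{I_0}'}$, and dispatching the easy case $x_\infty \in U_{I_0}'$---is correct and is essentially how the traditional argument (the one the paper defers to, cf.\ \cite[Proposition 7.38]{Tian_Xu_geometric}) begins.

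The treatment of the principal case $x_\infty \in \ov{U_{I_0}'} \setminus U_{I_0}'$, however, has a genuine gap. You claim that the underlying configurations of the $x_n$ ``Gromov-converge to a stable pointlike instanton $p_\infty \in \ov{\mc M}$'' and then invoke the footprint covering $\ov{\mc M} = \bigcup F_J'$ to locate a chart containing $p_\infty$. But the limit of the $x_n$ is already determined: it is $x_\infty \in \ov{U_{I_0}'}$, a point of the thickened moduli space. Continuity only gives $\hat S_{I_0}(x_\infty) = 0$, i.e.\ $S_{I_0}(x_\infty) = -t_{I_0}(x_\infty)$, whose norm is small but generically \emph{nonzero}. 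So $x_\infty$ represents an $I_0$-thickened solution whose perturbation vector $e_\alpha$ is nonzero: it is not a genuine pointlike instanton, does not lie in $\psi_{I_0}^{-1}(\ov{\mc M})$, and the covering condition for $\ov{\mc M}$ gives no purchase on it. Invoking a Ziltener-type compactness theorem for perturbed solutions is also misdirected---convergence in $\ov{U_{I_0}'}$ is already in hand, so no additional compactness is needed; what is needed is to show that $x_\infty$ is $\curlyvee$-equivalent to a point of some $U_J'$.

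The route the paper intends is entirely at the abstract level and hinges on the $C^0$-smallness of the perturbation (precisely because a distance function on $U_I'$ is unavailable). Roughly: for each $I$, the compact set $K_I \subset \ov{U_I'}$ of points that cannot be transferred by coordinate changes into any $U_J'$ is disjoint from $S_I^{-1}(0)$ (this is where the covering and overlapping conditions of the good coordinate system are used, applied to the genuine zero locus, not to the perturbed one), so $\|S_I\|$ has a positive minimum $\epsilon_I$ on $K_I$. Taking $\epsilon < \min_I \epsilon_I$ forces $\hat S_I = S_I + t_I$ to be nonvanishing on every $K_I$, hence $x_\infty \notin K_{I_0}$, hence $x_\infty$ transfers into some $U_{J_0}'$, and the topological embedding $U_{J_0}' \hookrightarrow |{\mc A}'|$ closes the argument. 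Your proof needs to replace the Gromov-compactness detour with this quantitative ``nonvanishing on the bad set'' step.
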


\begin{proof}
The argument is similar to the traditional case (see \cite[Proposition 7.38]{Tian_Xu_geometric}). The difference is that as we do not know if $U_I'$ is metrizable, we cannot use a distance function. However the condition that the $C^0$-norm of the perturbation can be arbitrarily small is enough to carry out the same argument.
\end{proof}

\subsection{Virtual count and invariance}

Now one can define the virtual count. For each $p \in |\hat{\mf s}^{-1}(0)|$, choose a representative $x_I \in U_I$ at which a branch of $\hat s_I$ vanishes. Suppose $\hat s_I$ can be represented by a $\Gammait_I$-liftable multimap with $k_I$ branches
\beqn
\hat {\bm s}_I = [\hat s_1, \ldots, \hat s_{k_I}]: {\bm U}_I \to S^{k_I}({\bm E}_I).
\eeqn
Suppose ${\bm x}_I \in {\bm U}_I$ is in the orbit $x_I$. Then define 
\beqn
\epsilon_p:= \frac{1}{k_I} \sum_{i=1}^{k_I} \epsilon_i
\eeqn
where $\epsilon_i = 0$ if $\hat s_i ({\bm x}_I) \neq 0$ and $\epsilon_i = \pm 1$ if $\hat s_i({\bm x}_I) = 0$ and the sign is determined in the following way. By our construction, we know that the perturbation only vanishes on the top stratum. For the chart $C_I = (U_I, E_I, S_I, \psi_I, F_I)$, we know that $U_I$ is the $\Gammait_I$-quotient of ${\bm U}_I\subset {\mc M}_I$. The tangent space of the top stratum of ${\bm U}_I$ at the point ${\bm x}_I$, which is represented by an $I$-thickened solution whose underlying gauged map is $(u, \phi, \psi)$, is isomorphic to the kernel of the map 
\beqn
\iota_I \oplus \hat D_u: E_I \oplus W^{1,p,\delta}({\mb C}, u^* TX \oplus {\mf k} \oplus {\mf k}) \to L^{p,\delta}({\mb C}, u^* TX\oplus {\mf k} \oplus {\mf k}).
\eeqn
Here $\hat D_u$ is the augmented linearized operator (see \eqref{aug}). As $J$ is integrable, we know that $\hat D_u$ is complex linear. Therefore the operator $\hat D_u$ carries a natural orientation. Therefore, over the manifold ${\bm U}_I$, the determinant line
\beqn
\det T{\bm U}_I \otimes \det {\bm E}_I^* \cong \det \hat D_u
\eeqn
has a natural trivialization which is $\Gammait_I$-equivariant. As $\hat s_i$ vanishes transversely at ${\bm x}_I$, the above orientation determines a sign $\epsilon_i$. This sign is clearly independent of the choice of the point ${\bm x}_I$ in its orbit and independent of the chart $C_I$. 

Lastly we define 
\beqn
\# |\hat {\mf s}^{-1}(0)| = \sum_{p \in |\hat {\mf s}^{-1}(0)|} \epsilon_p \in {\mb Q}.
\eeqn
We call this rational number the {\bf virtual count} of the moduli space $\ov{\mc M}_S(V, d)$.

\begin{rem}
We give a remark on how to prove the independent of this virtual count on various choices made in the construction. Given a good atlas ${\mc A}$ on $\ov{\mc M}_S(V, d)$ and a system of perturbations ${\mc T}$, one can define the virtual count or the virtual Euler number as the weighted count of zeroes. To prove that this number is independent of the construction, one use the typical cobordism argument. Consider the product $[0,1]\times \ov{\mc M}_S(V, d)$, which can be stratified by 
\beqn
\{0\}\times {\mc M}_\Gamma(V),\ \{1\}\times {\mc M}_\Gamma(V),\ (0,1)\times {\mc M}_\Gamma(V),\ \Gamma \in \Lambda_d.
\eeqn
The method of constructing sum charts can be extended to this product to produce a good coordinate system ${\mc A}_{[0,1]}$ together with a perturbation ${\mc T}_{[0,1]}$ whose restriction to the $\{0\}$-slice and the $\{1\}$-slice agree with the two given good coordinate systems together with perturbations. Then in the virtual neighborhood $|{\mc A}_{[0,1]}|$ the zero locus of ${\mc T}_{[0,1]}$ is a weighted 1-manifold providing an oriented cobordism between the zero locus of ${\mc T}_{0}$ and that of ${\mc T}_{1}$. It follows that the virtual counts defined for the two choices coincide. 

\end{rem}

\begin{rem}\label{rem520}
We also remark that one can extend our construction to the situation when $S$ is a pseudocycle rather than a compact submanifold. Indeed, if $S \subset X$ is a pseudocycle, then $\ov{S} \setminus S$ is the union of submanifolds $S_\beta$ of codimension at least two. Then one can consider a compact moduli space $\ov{\mc M}_{\ov{S}}(V, d) \subset \ov{\mc M}(V, d)$ which contains strata ${\mc M}_S(V, d)_\lambda$ or ${\mc M}_{S_\beta}(V, d)_\lambda$. Then a similar construction can be carried out.
\end{rem}

\subsection{Shrinking good coordinate systems}

We prove the following lemma which was used in induction ({\it Step Four} of Subsection \ref{subsection56}).

\begin{lemma}\label{lemma521}
Let $\ov{\mc M}$ be a compact Hausdorff space. Let ${\mc A} = ({\mc I}, (C_I), (T_{JI}))$ be a virtual atlas of ${\mc M}$. For each $I\in {\mc I}$, let $F_I^\circ \subset F_I$ be a precompact open subset of the footprint $F_I$ such that 
\beqn
\ov{\mc M} = \bigcup_{I \in {\mc I}} F_I^\circ.
\eeqn
Then there exist a precompact shrinking ${\mc A}' = ({\mc I}, (C_I'), (T_{JI}'))$ of ${\mc A}$ which is a good coordinate system such that $\ov{F_I^\circ} \subset F_I'$. 
\end{lemma}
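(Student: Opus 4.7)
My plan is to shrink the atlas ${\mc A}$ in two precompact stages: first to an intermediate atlas ${\mc A}^\sharp$ whose footprints still contain $\ov{F_I^\circ}$, and then to a final ${\mc A}'$ which is a good coordinate system. For the first stage, I use normality of the compact Hausdorff space $\ov{\mc M}$ to pick, for each $I \in {\mc I}$, an open footprint $F_I^\sharp$ with $\ov{F_I^\circ} \subset F_I^\sharp \subset \ov{F_I^\sharp} \subset F_I$. Because $\psi_I$ is a homeomorphism onto $F_I$, the preimage $\psi_I^{-1}(\ov{F_I^\sharp})$ is compact in $U_I$, and by local compactness (Lemma \ref{lemma511}) it is contained in a precompact open set $U_I^\sharp \sqsubset U_I$. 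I restrict the coordinate changes by $U_{JI}^\sharp := U_{JI} \cap U_I^\sharp \cap \phi_{JI}^{-1}(U_J^\sharp)$. The identity, cocycle, overlapping, and covering conditions of Definition \ref{defn43} are inherited from ${\mc A}$.

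For the second stage I take further precompact shrinkings $U_I' \sqsubset U_I^\sharp$ with $\psi_I^{-1}(\ov{F_I^\circ}) \subset U_I'$, then set $U_{JI}' := U_{JI}^\sharp \cap U_I' \cap \phi_{JI}^{-1}(U_J')$ and restrict bundles and sections accordingly. Transitivity of the relation $\curlyvee$ is automatic from the cocycle condition on ${\mc A}$ combined with this definition, iterated through triple overlaps $U_{KJI}'$; since ${\mc I}$ is finite, only finitely many compatibility relations must be checked. Once Hausdorffness of $|{\mc A}'|$ is established, the topological embedding $U_I' \hookrightarrow |{\mc A}'|$ follows formally: $\ov{U_I'}$ is compact and first countable (Lemma \ref{lemma511}), so the continuous injection $\ov{U_I'} \to |{\mc A}'|$ is a closed embedding, and the restriction to the open subset $U_I'$ is a topological embedding.

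The hard part will be proving Hausdorffness of $|{\mc A}'|$, and this is where the nested choice $\ov{U_I'} \subset U_I^\sharp \sqsubset U_I$ plays the crucial role. I would argue by contradiction: suppose distinct points $[x] \neq [y] \in |{\mc A}'|$ cannot be separated. Using sequential representatives in the neighborhood filters and finiteness of ${\mc I}$, after passing to subsequences I obtain sequences $x_n \in U_{I_0}'$ and $y_n \in U_{J_0}'$ with $[x_n] = [y_n] \in |{\mc A}'|$, converging in the compact first-countable spaces $\ov{U_{I_0}'}$ and $\ov{U_{J_0}'}$ to limits $x_\infty$ and $y_\infty$. Each identification $[x_n]=[y_n]$ is witnessed by a finite chain of coordinate changes through charts of ${\mc A}'$, and by finiteness of ${\mc I}$ I may fix this chain on a further subsequence. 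Applying the closed-graph condition (Definition \ref{defn42}(d)) successively along the chain, and using crucially that $\ov{U_I'}$ sits in the interior of the original $U_I$ so that intermediate limits remain in the overlap domains of ${\mc A}$, I conclude that $x_\infty$ and $y_\infty$ are already related by the composed coordinate change in ${\mc A}^\sharp$. Hence $[x] = [y]$ in $|{\mc A}'|$, contradicting our assumption. The whole argument really hinges on this closed-graph step, which is the purpose of condition (d) of Definition \ref{defn42} — all other items are formal consequences of the nested shrinkings and the hypotheses already placed on ${\mc A}$.
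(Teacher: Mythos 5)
There is a genuine gap at the key step, namely your claim that transitivity of $\curlyvee$ is ``automatic from the cocycle condition on ${\mc A}$ combined with this definition, iterated through triple overlaps.'' It is not. The cocycle condition of Definition~\ref{defn43} asserts $\hat\phi_{KI} = \hat\phi_{KJ}\circ\hat\phi_{JI}$ only on $U_{KJI} = U_{KI}\cap\phi_{JI}^{-1}(U_{KJ})$, a set that is by definition already contained in $U_{KI}$. For transitivity of $\curlyvee$ you need the much stronger implication: if $x\in U_{JI}'$ and $\phi_{JI}(x)\in U_{KJ}'$, then $x\in U_{KI}'$. Your definition $U_{JI}' := U_{JI}^\sharp\cap U_I'\cap\phi_{JI}^{-1}(U_J')$ shrinks uniformly by intersecting with precompact opens, but this does not force $x$ into $U_{KI}$ when $x\in U_{JI}$ and $\phi_{JI}(x)\in U_{KJ}$; nothing in the atlas axioms implies $U_{JI}\cap\phi_{JI}^{-1}(U_{KJ})\subset U_{KI}$. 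Arranging that inclusion is exactly what the delicate per-triple shrinking of \cite[Lemma 7.30]{Tian_Xu_geometric} does, and the paper stresses that the only topological input needed there is Corollary~\ref{cor58} (every compact subset of $U_I$ has a shrinking sequence of open neighborhoods), a property you never invoke. Without that step, the relation $\curlyvee$ on ${\mc A}'$ need not be transitive, the quotient $|{\mc A}'|$ is not even defined, and the rest of the argument cannot start.

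Your Hausdorffness sketch is in the right spirit and parallels \cite[Lemma 7.31]{Tian_Xu_geometric}: passing to convergent subsequences in compact, first countable $\ov{U_I'}$, and using the closed-graph condition (Definition~\ref{defn42}(d)) together with the nested shrinking $\ov{U_I'}\sqsubset U_I^\sharp\sqsubset U_I$. But it presupposes that $\curlyvee$ is an equivalence relation (so that ``$[x_n]=[y_n]$'' is witnessed by a single coordinate change, not an arbitrary chain). So the gap in the equivalence-relation step propagates forward. To repair the proof you should add, before the Hausdorffness argument, the per-triple shrinking step of \cite[Lemma 7.30]{Tian_Xu_geometric}, citing Corollary~\ref{cor58} as the topological input that makes it possible.
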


\begin{proof}
In \cite{Tian_Xu_geometric} similar statement is proved for the stronger notion of {\it virtual orbifold atlas}. The proof of the current lemma is similar. We first find a shrinking ${\mc A}'$ such that the relation $\curlyvee'$ on ${\mc A}'$ is an equivalence relation. The argument of \cite[Lemma 7.30]{Tian_Xu_geometric} is to look for shrinkings for each triple $I, J, K$. The only property used in the proof is that for each chart $C_I = (U_I, E_I, S_I, \psi_I, F_I)$, any compact subset $K \subset U_I$ has a sequence of shrinking neighborhoods. This is precisely Corollary \ref{cor58}. Therefore one can assume that the relation $\curlyvee$ on ${\mc A}$ is already an equivalence relation. Next we need to show that after certain shrinking the virtual neighborhood $|{\mc A}'|$ is Hausdorff and the inclusions $U_I' \hookrightarrow |{\mc A}'|$ is a homeomorphism onto its image. In the setting of \cite{Tian_Xu_geometric}, this was proved via its Lemma 7.31, 7.32, and 7.33. In fact Lemma 7.32 and 7.33 carry over word by word. For \cite[Lemma 7.31]{Tian_Xu_geometric} notice the only property one uses is that each $U_I$ is locally compact, first countable, and Hausdorff. Therefore one can prove the current lemma in the same way as proving \cite[Lemma 7.31]{Tian_Xu_geometric}.
\end{proof}

\appendix

\section{Analysis of Pointlike Instantons}\label{appendixa}

In this appendix we prove several analytical ingredients in the construction. We first prove that pointlike instantons are also affine vortices. Then using known results or methods about the latter, we establish local model and Fredholm theory and prove the existence of finite-dimensional obstruction spaces.

\subsection{Pointlike instantons are affine vortices}

\begin{proof}[Proof of Proposition \ref{prop28}]
We only give a sketch as it is very similar to the proof of \cite[Theorem 5.1]{Tian_Xu_geometric}. First we can prove that bounded solutions satisfy the asymptotic condition 
\beqn
\lim_{z\to \infty} |\nabla W(u(z))| = \lim_{z \to \infty} |\mu(u(z))| = 0.
\eeqn
Define 
\begin{align*}
&\ u_s:= \partial_s u + {\mc X}_\phi(u),\ &\ u_t:= \partial_t u + {\mc X}_\psi(u).
\end{align*}
Then by the calculations and estimates in the proofs of \cite[Lemma 5.2, Lemma 5.3]{Tian_Xu_geometric}, one has 
\beqn
\Delta \left( |u_s|^2 + |u_t|^2 \right) \geq - C \left( 1 + (|u_s|^2 + |u_t|^2)^2 \right).
\eeqn
Let $\tau$, $\theta$ be the cylindrical coordinates defined via $s + {\bm i} t = e^{\tau + {\bm i} \theta}$. Then one has 
\beqn
\Delta_{\rm cyl} \left( |u_\theta|^2 + |u_\tau|^2 \right) \geq -C \left( 1 + ( |u_\theta|^2 + |u_\tau|^2)^2 \right).
\eeqn
By the mean value inequality one has 
\beqn
\lim_{|z|\to \infty} ( |u_\theta|^2 + |u_\tau|^2 ) = 0.
\eeqn
It follows from the first equation of \eqref{eqn21} that 
\beqn
\lim_{|z|\to \infty} |z| |\nabla W(u(z))| = 0.
\eeqn
As the solution is bounded and as $0$ is the only critical value of $W$ (\cite[Lemma 3.4]{Tian_Xu_geometric}), one has
\beq\label{eqna1}
\lim_{|z|\to \infty} |z| |W(u(z))| = 0.
\eeq
Then for all $R>0$, one has 
\begin{multline*}
\| u_s + J u_t\|_{L^2({\mb C})}^2 + \|\nabla W(u)\|_{L^2({\mb C})}^2  = \| u_s + J u_t + \nabla W(u)\|_{L^2({\mb C})}^2 - \int_{\mb C} \langle u_s + J u_t, \nabla W(u) \rangle ds dt \\
= - {\bm i} \int_{{\mb C}} dW(u) \cdot (u_s + J u_t) = {\bm i} \int_{\mb C} \ov\partial (W(u) dz)= {\bm i} \int_{\mb C} d (W(u) dz) = {\bm i}\lim_{R \to \infty} \int_{\partial B_R} W(u) dz.
\end{multline*}
By \eqref{eqna1} the right hand side is zero. Hence $u_s + Ju_t \equiv \nabla W(u) \equiv 0$.
\end{proof}

\subsection{Local model}

The theorem about local model for pointlike instantons can be derived with the aid of analogous results about affine vortices obtained in \cite{Venugopalan_Xu}. We recall the main theorem of \cite{Venugopalan_Xu} here. Let $u = (u, \phi, \psi)$ be an affine vortex in the K\"ahler manifold $(V, \omega, J)$ acted by the compact Lie group $K$ with a moment map $\mu: V \to {\mf k}^*$. Let $A$ be the connection $A = d + \phi ds + \psi dt$ on the trivial bundle $K \times {\mb C} \to {\mb C}$. Assume that $0$ is a regular value of $\mu$ and $K$ acts freely on $\mu^{-1}(0)$. Choose $p>2$ and $\delta \in (1 - \frac{2}{p}, 1)$. Define the norm on triples $\xi = (\xi_0, \xi_1, \xi_2)\in C_0^\infty( {\mb C}, u^* TV \oplus {\mf k} \oplus {\mf k})$ as 
\beqn
\| \xi \|_{1,p,\delta} = \| \xi_0 \|_{L^\infty} + \| \xi_1 \|_{L^{p,\delta}} + \| \xi_2\|_{L^{p,\delta}} +  \| \nabla^A \xi \|_{L^{p,\delta}} + \| d\mu(u)(\xi) \|_{L^{p,\delta}} + \| d\mu(u) (J \xi ) \|_{L^{p,\delta}}.
\eeqn
Let $\tilde {\mc B}_u$ be the completion of $C_0^\infty({\mb C}, u^* TV \oplus {\mf k} \oplus {\mf k})$ with respect to this norm which consists of $W^{1,p}_{\rm loc}$ sections of $u^* TV \oplus {\mf k} \oplus {\mf k}$ with finite norm. It is shown in \cite[Theorem 4(i)]{Ziltener_book} that $\tilde {\mc B}_u$ is a complete Banach space. The norm is obviously gauge invariant. Namely, if $g: {\mb C} \to K$ is a smooth gauge transformation and denote $u' = (u', \phi', \psi') = g^* u$, $A' = d + \phi' ds + \psi' dt$. Then for any $\xi = (\xi_0, \xi_1, \xi_2)$, denoting $\xi' = (\xi_0', \xi_1', \xi_2'):= g^* \xi$, there holds
\beqn
\| \xi  \|_{1,p,\delta} = \| \xi' \|_{1,p,\delta}
\eeqn
where the left hand side is defined with respect to $u$ and the right hand side is defined with respect to $u'$. Also notice that by the Sobolev embedding $W^{1,p}\hookrightarrow C^0$ in two dimensions and the fact that $\delta>0$, one has the estimate
\beqn
\| \xi \|_{L^\infty} \leq C \| \xi \|_{1,p,\delta},\ \forall \xi \in \tilde {\mc B}_u.
\eeqn

We call $\tilde {\mc B}_u$ the space of infinitesimal deformations of $u$ (with certain Sobolev regularity). We use an exponential map to identify small infinitesimal deformations with nearby gauged maps. For this we choose a particular kind of $K$-invariant Riemannian metric on $V$, i.e., a $K$-invariant metric such that $\mu^{-1}(0)$ is a totally geodesic submanifold. Let $\exp$ be its exponential map. 

Now for $\epsilon>0$ sufficiently small, one defines for $\xi \in \tilde {\mc B}_u^\epsilon$ a gauged map
\beqn
u_\xi:= (u_\xi, \phi_\xi, \psi_\xi):= (\exp_u \xi_0, \phi + \xi_1, \psi + \xi_2).
\eeqn
Also define for each $\xi \in \tilde {\mc B}_u^\epsilon$ a Sobolev space
\beqn
{\mc E}_{u_\xi}:= L^{p,\delta}(u_\xi^* TV \oplus {\mf k}).
\eeqn
Then define the nonlinear map 
\beqn
\begin{array}{rcc} 
\tilde {\mc F}_u: \tilde {\mc B}_u^\epsilon & \to & \displaystyle \bigsqcup_{\xi \in \tilde {\mc B}_u^\epsilon} {\mc E}_{u_\xi}\\
                  \xi & \mapsto & \left(\begin{array}{c} \partial_s u_\xi + {\mc X}_{\phi_\xi} (u_\xi) + J( \partial_t u_\xi + {\mc X}_{\psi_\xi}(u_\xi))\\ \partial_s \psi_\xi - \partial_t \phi_\xi + [\phi_\xi, \psi_\xi] + \mu(u_\xi)\end{array}\right)                 .
\end{array}
\eeqn
Then each zero $\xi \in \tilde {\mc F}_u^{-1}(0)$ gives an affine vortex $u_\xi$. The linearization of $\tilde {\mc F}_u$ at $\xi = 0$ is a first-order differential operator
\beqn
\tilde D_u(\xi) = \left( \begin{array}{c} \partial_s \xi_0 + \nabla_{\xi_0} {\mc X}_\phi + J( \nabla_t \xi_0 + \nabla_{\xi_0} {\mc X}_{\psi}) + {\mc X}_{\xi_1} + J {\mc X}_{\xi_2} \\
                                          \partial_s \xi_2 + [\phi, \xi_2] - \partial_t \xi_1 - [\psi, \xi_1] + d\mu(u) (\xi_0)\end{array}\right).
\eeqn

Define the so-called ``Coulomb slice''
\beqn
{\mc B}_u:= \big\{ \xi \in \tilde {\mc B}_u\ |\ \partial_s \xi_1 + [\phi, \xi_1] + \partial_t \xi_2 + [\psi, \xi_2] + d\mu(u)(J\xi_0) = 0 \big\}.
\eeqn
Denote ${\mc B}_u^\epsilon:= {\mc B}_u \cap \tilde {\mc B}_u^\epsilon$, ${\mc F}_u:= \tilde {\mc F}_u|_{{\mc B}_u^\epsilon}$, and $D_u:= \tilde D_u|_{{\mc B}_u}$. Then one has a natural map 
\beq\label{eqna2}
\varphi_u^\epsilon: {\mc F}_u^{-1}(0) \to \tilde {\mc N} (V),\ \xi \mapsto [u_\xi].
\eeq
Notice that $\tilde {\mc N}(V)$ is the set of gauge equivalence classes of bounded solutions to the vortex equation without quotienting out the translation symmetry.

\begin{prop}\cite{Ziltener_book}
The linear map $D_u: {\mc B}_u \to {\mc E}_u$ is Fredholm.
\end{prop}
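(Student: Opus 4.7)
The plan is to reduce the claim to a standard Fredholm problem on a weighted half-cylinder. First, I would append the infinitesimal Coulomb gauge-fixing operator to $D_u$ to form the augmented operator
\begin{align*}
\hat D_u: \tilde{\mc B}_u \to {\mc E}_u \oplus L^{p,\delta}({\mb C}, {\mf k}),\quad \xi \mapsto \bigl(\tilde D_u \xi,\ \partial_s \xi_1 + [\phi, \xi_1] + \partial_t \xi_2 + [\psi, \xi_2] + d\mu(u)(J\xi_0)\bigr).
\end{align*}
By definition, ${\mc B}_u$ is precisely the kernel of the second component, and $D_u$ is obtained from $\hat D_u$ by restricting the domain to ${\mc B}_u$ and projecting the target onto ${\mc E}_u$. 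A short diagram chase shows that $D_u$ is Fredholm if and only if $\hat D_u$ is, so I would focus on the latter.

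Next, I would pass to cylindrical coordinates $z = e^{\tau + {\bm i}\theta}$ on the end $|z| \geq R$. Under this change of variables, the weighted norm $L^{p,\delta}$ on ${\mb C}$ transforms into a standard weighted $L^p$ norm on $[\log R, \infty) \times S^1$; the assumption $\delta \in (1 - 2/p, 1)$ fixes the corresponding cylindrical weight to lie in a specific range. By Ziltener's asymptotic decay theorem \cite{Ziltener_Decay,Ziltener_book}, which applies here thanks to Proposition \ref{prop28}, after a suitable gauge transformation the gauged map $u$ converges exponentially to a constant map $x_\infty \in \mu^{-1}(0)$, while $(\phi, \psi)$ decay exponentially to zero. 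Consequently $\hat D_u$ converges in operator norm to a translation-invariant model $\hat D_\infty = \partial_\tau + L_\infty$ on the cylinder, where $L_\infty$ is a first-order elliptic operator on $S^1$ with constant coefficients determined by the linearized $K$-action at $x_\infty$ and by the complex structure on $T_{x_\infty} V$.

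The heart of the argument is to verify that the chosen weight lies in a spectral gap of $\hat D_\infty$, so that this asymptotic operator is invertible on the weighted cylinder. Using the $K$-invariant Hermitian splitting $T_{x_\infty} V = H_{x_\infty} \oplus T_{x_\infty}(K x_\infty) \oplus J\, T_{x_\infty}(K x_\infty)$, where $H_{x_\infty}$ descends to $T_{\pi_V(x_\infty)} X$, the operator $L_\infty$ decouples into a standard Cauchy--Riemann block on $H_{x_\infty}$ and a ``gauge-plus-vertical'' block on the remaining directions together with ${\mf k}^{\oplus 2}$. Freeness of the $K$-action on $\mu^{-1}(0)$ ensures that the second block has no zero mode; both blocks have discrete spectra, and the range $\delta \in (1-2/p, 1)$ is precisely tailored so that the exponential weight on the half-cylinder avoids these critical values. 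The main obstacle will be this spectral computation, in particular pinning down the contribution of the Coulomb term to $L_\infty$ and checking that the interaction between the $\mu^{-1}(0)$-direction and the ${\mf k}$-factor does not produce an unexpected critical weight. Once the spectral gap is established, a parametrix at infinity patched with the interior parametrix built from local elliptic estimates, followed by standard weighted Sobolev Fredholm theory, yields finite-dimensional kernel and cokernel and closed range for $\hat D_u$, and hence for $D_u$.
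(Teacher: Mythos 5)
The paper itself gives no proof of this proposition --- it cites Ziltener's book --- so the question is whether your route would actually work. There is a genuine gap at the central step: the claim that, in cylindrical coordinates on the end, $\hat D_u$ converges in operator norm to a translation-invariant model $\partial_\tau + L_\infty$. It does not. Normalizing the cylindrical derivative by multiplying the operator by $|z| = e^\tau$ promotes the zeroth-order moment-map couplings --- $d\mu(u)(\xi_0)$ in the curvature block and ${\mc X}_{\xi_1} + J{\mc X}_{\xi_2}$ in the Cauchy--Riemann block --- to terms that grow exponentially in $\tau$ rather than settle to a constant-coefficient limit. This is the same adiabatic degeneration that drives the compactness theory for affine vortices, and it means the operator is not asymptotically translation-invariant, so the Lockhart--McOwen picture you invoke (spectrum of $L_\infty$, $\delta$ in a spectral gap, parametrix at infinity) does not apply in the vertical and gauge directions. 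You can already see the obstruction in the definition of $\tilde{\mc B}_u$: the $\xi_0$-component is controlled only by $\|\xi_0\|_{L^\infty}$, $\|\nabla^A\xi_0\|_{L^{p,\delta}}$, and $\|d\mu(u)(\xi_0)\|_{L^{p,\delta}}$, $\|d\mu(u)(J\xi_0)\|_{L^{p,\delta}}$ --- there is no weighted $L^p$ bound on $\xi_0$ itself --- so this is not a standard weighted Sobolev space, and a parametrix built from the inverse of an asymptotic translation-invariant model would produce estimates in the wrong norm.

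Your splitting $T_{x_\infty}V \cong H_{x_\infty}\oplus T_{x_\infty}(Kx_\infty)\oplus J\,T_{x_\infty}(Kx_\infty)$ is the right one, but the two blocks are handled by two different mechanisms, not one spectral-gap argument. On the horizontal block $H_{x_\infty}$ the operator does reduce, at infinity, to a genuine Cauchy--Riemann operator on the weighted half-cylinder, and this is where the range $\delta\in(1-2/p,1)$ actually does its work. On the gauge-plus-vertical block the exponentially amplified moment-map coupling acts as a strong zeroth-order coercive ``mass'' term; that block is invertible at infinity essentially for any weight, proved by a direct a priori estimate rather than by comparison with an asymptotic constant-coefficient operator. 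The ``interaction'' you flagged as your main worry is therefore not a spectral computation at all --- it is the loss of translation invariance --- and assembling the two mechanisms into closed range with finite kernel and cokernel is precisely why the Fredholm proof in Ziltener's book (and the parallel analysis in Venugopalan--Xu) is substantially longer than a routine cylindrical-end argument.
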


We call an affine vortex $u$ {\bf regular} if $D_u$ is surjective.

\begin{thm}\cite[Proposition 3.3, 3.4]{Venugopalan_Xu}\label{thma2} Suppose $u$ is a regular affine vortex. Then there exists $\epsilon>0$ such that the natural map $\varphi_u^\epsilon: {\mc F}_u^{-1}(0) \to \tilde {\mc N} (V)$ is a homeomorphism onto an open neighborhood of $[u]$. 
\end{thm}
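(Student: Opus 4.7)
The plan is to obtain Theorem \ref{thma2} from two ingredients: a quantitative implicit function theorem applied to ${\mc F}_u$ on the Coulomb slice, and a local Coulomb gauge-fixing lemma that exhibits any nearby gauged map (up to gauge) as $u_\xi$ for a unique $\xi \in {\mc B}_u^\epsilon$.

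First I would set up the implicit function theorem. Since $u$ is regular, the Fredholm operator $D_u: {\mc B}_u \to {\mc E}_u$ is surjective and admits a bounded right inverse $Q_u: {\mc E}_u \to {\mc B}_u$ whose image is transverse to $\ker D_u$. The map ${\mc F}_u: {\mc B}_u^\epsilon \to {\mc E}_u$ (after a bundle trivialization along parallel transport of $\exp_u$ in a suitably chosen $K$-invariant metric for which $\mu^{-1}(0)$ is totally geodesic) is continuously differentiable with ${\mc F}_u(0) = 0$ and $d{\mc F}_u(0) = D_u$, and its quadratic remainder satisfies $\|{\mc F}_u(\xi) - {\mc F}_u(0) - D_u \xi\|_{\mc E} \leq C \|\xi\|_{1,p,\delta}^2$ on a small ball, because the nonlinearities come from pointwise evaluation of smooth functions multiplied by curvature/moment map terms whose $L^\infty$-bound is controlled by the $\|\cdot\|_{1,p,\delta}$-norm through Sobolev embedding. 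The quantitative IFT then produces, for $\epsilon>0$ small, a smooth finite-dimensional manifold structure on ${\mc F}_u^{-1}(0) \cap {\mc B}_u^\epsilon$ modelled on $\ker D_u$, and in particular $\varphi_u^\epsilon$ is well-defined and continuous into $\tilde{\mc N}(V)$.

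Next I would prove a local Coulomb gauge-fixing statement: there exist $\delta_0>0$ and $\epsilon_0>0$ so that whenever $u' = (u', \phi', \psi')$ is a bounded solution gauge equivalent to a gauged map $u_\eta$ with $\eta \in \tilde{\mc B}_u^{\delta_0}$, there is a unique $W^{2,p,\delta}_{\rm loc}$ gauge transformation $g:{\mb C}\to K$ with $g(\infty)={\rm id}$ such that $g^* u' = u_\xi$ for some $\xi \in {\mc B}_u^{\epsilon_0}$. This is obtained by solving the Coulomb equation
\beqn
\partial_s \xi_1 + [\phi,\xi_1] + \partial_t \xi_2 + [\psi,\xi_2] + d\mu(u)(J\xi_0) = 0
\eeqn
for $g = \exp(\zeta)$ with $\zeta \in W^{2,p,\delta}$; the linearization at $g = {\rm id}$ is the augmented Laplacian
\beqn
\zeta \longmapsto \Delta_A \zeta + \langle d\mu(u) J {\mc X}_\zeta(u), \cdot\rangle,
\eeqn
which is an isomorphism on weighted Sobolev spaces by the standard vortex-type elliptic estimate (the zeroth order term is uniformly positive definite outside a compact set because $u$ is asymptotic to $\mu^{-1}(0)$ where the $K$-action is free). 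Another application of the IFT yields the unique $\zeta$, hence a unique $\xi \in {\mc B}_u$ representing the gauge class of $u'$.

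Finally I would combine these. Surjectivity onto an open neighborhood of $[u]$ in $\tilde{\mc N}(V)$ follows because the Coulomb-fixing lemma gives, for each nearby $[u']$, a slice representative $\xi\in {\mc B}_u^\epsilon$ with $\varphi_u^\epsilon(\xi) = [u']$, and $\xi$ automatically lies in ${\mc F}_u^{-1}(0)$ since $u_\xi$ solves the vortex equation. Injectivity: if $\varphi_u^\epsilon(\xi) = \varphi_u^\epsilon(\xi')$ with $\xi,\xi'$ small, both are Coulomb representatives of the same gauge class, so the uniqueness part of the gauge-fixing lemma forces $\xi = \xi'$. The inverse map $[u']\mapsto \xi$ is continuous because the solution operator of the Coulomb equation depends continuously on the input in the relevant Sobolev topologies, and the $C^\infty_{\rm loc}$ (modulo gauge) topology on $\tilde{\mc N}(V)$ restricted to bounded energy solutions near $u$ is controlled by $\|\cdot\|_{1,p,\delta}$ via interior elliptic regularity combined with Ziltener's asymptotic decay estimates. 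The main obstacle I anticipate is verifying invertibility of the Coulomb gauge-fixing operator on the weighted space $W^{2,p,\delta}$ with the correct decay $\delta \in (1-\tfrac2p, 1)$; this requires matching the weight to the asymptotic geometry of $u$ at $\infty$ and carefully exploiting that $u(\infty) \in \mu^{-1}(0)$ with free $K$-action, so that the zeroth order term in the linearized gauge-fixing operator provides the required Fredholm structure, just as in \cite[Proposition 3.3]{Venugopalan_Xu}.
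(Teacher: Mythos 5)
The paper does not prove Theorem \ref{thma2}: it is imported verbatim from \cite[Proposition 3.3, 3.4]{Venugopalan_Xu}, and everywhere else in the paper that result is invoked as a black box (for instance, the proof of the pointlike-instanton local model in Appendix \ref{appendixa} reduces injectivity of $\varphi_u^\epsilon$ to ``\cite[Proposition 3.3]{Venugopalan_Xu}''). So there is no in-paper argument to compare against.

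On its own terms, your sketch follows the standard Coulomb-slice strategy (quantitative implicit function theorem for ${\mc F}_u$ on the slice ${\mc B}_u$, together with a local Coulomb gauge-fixing lemma giving a unique slice representative near $u$), and this is indeed the route taken in the cited reference. The genuinely nontrivial part, which you correctly isolate as the main obstacle, is the Fredholm/invertibility analysis for the linearized gauge-fixing operator $\zeta \mapsto \Delta_A\zeta + \langle d\mu(u)J{\mc X}_\zeta(u), \cdot\rangle$ on the weighted space $W^{2,p,\delta}$ with $\delta\in(1-\tfrac2p,1)$, where the zeroth-order term becomes uniformly definite only outside a compact set because $u(\infty)\in\mu^{-1}(0)$ with free $K$-action. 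Two points are elided and would need to be spelled out in a full proof: (i) that the gauge-fixing equation itself is $C^1$ with a quadratic remainder bound compatible with the weighted norm (you only record this for ${\mc F}_u$), and (ii) that continuity of the inverse $[u']\mapsto\xi$ uses Ziltener's decay estimates to upgrade $C^\infty_{\rm loc}$-convergence modulo gauge to $\|\cdot\|_{1,p,\delta}$-convergence of slice representatives, which is where the choice of weight interacts with the asymptotics at $\infty$. Neither is a wrong turn; they are simply the parts where the details of \cite{Venugopalan_Xu} live.
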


\begin{rem}
The regularity assumption on $u$ is unnecessary. When $u$ is not regular, the Fredholm property allows us to choose a finite-dimensional obstruction space $E$ generated by sections $s_1, \ldots, s_k \in C_0^\infty({\mb C}, u^*T V) \subset L^{p,\delta}({\mb C}, u^* TV \oplus {\mf k})$ such that the image of $D_u$ is transverse to this obstruction space (see Lemma \ref{lemmaa8} below). Then one can define the thickened moduli space $\tilde {\mc N}_E(V)$ consisting of gauge equivalence classes of gauged maps $ u = (u, \phi, \psi)$ from ${\mb C}$ to $V$ together with linear combinations $a_1 s_1 + \cdots + a_k s_k$ such that $(u,\phi, \psi)$ solves the vortex equation modulo $a_1 s_1 + \cdots + a_k s_k$. Similar argument as proving Theorem \ref{thma2} identifies the thickened moduli space with a zero locus of a nonlinear Fredholm map which, by transversality, is a manifold near $[u]$. 
\end{rem}

Theorem \ref{thma2} also provides a local coordinate chart for a Banach manifold which is difficult to describe explicitly. Indeed, for each element $[u] \in \tilde {\mc N}(V)$ and each representative $(u, \phi, \psi)$, we have described an open ball of a Banach space ${\mc B}_u$, another Banach space ${\mc E}_u$, and a Fredholm section ${\mc F}_u: {\mc B}_u \to {\mc E}_u$ whose zero locus is identical to a neighborhood of $[u]$ in ${\mc N}(V)$. Now suppose $[u], [u']\in \tilde {\mc N}(V)$ have representatives $(u, \phi,\psi)$ and $(u', \phi', \psi')$. Then by \cite[Proposition 3.8]{Venugopalan_Xu}, whenever ${\mc B}_u \cap {\mc B}_{u'} \neq \emptyset$ (as sets of gauge equivalence classes of gauged maps), there is a smooth coordinate changes. The cocycle condition among coordinate changes are obvious from their definition. Hence one obtains a smooth Banach manifold ${\mc B}$. At the same time the Banach spaces ${\mc E}_u$ glue together to a Banach vector bundle ${\mc E} \to {\mc B}$ and the maps ${\mc F}_u$ glue together to a section ${\mc F}: {\mc B} \to {\mc E}$. Then one has the homeomorphism 
\beqn
{\mc F}^{-1}(0) \cong \tilde {\mc N}(V).
\eeqn

\subsubsection{The case of the gauged Witten equation}

Now let $u= (u, \phi, \psi)$ be a pointlike instanton, i.e., a finite energy solution to the gauged Witten equation over ${\mb C}$. By Proposition \ref{prop28} $u$ is also an affine vortex. Now define a different deformation space
\beqn
\tilde {\mc W}_u: = \{ \xi \in \tilde {\mc B}_u\ |\ \| \nabla_{\xi_0} \nabla W(u) \|_{L^{p,\delta}} < \infty\}
\eeqn
with the stronger norm
\beqn
\| \xi \|_{1,p,\delta}^W:= \| \xi \|_{1,p,\delta} + \| \nabla_{\xi_0} \nabla W(u) \|_{L^{p,\delta}}.
\eeqn
Define the linear map 
\beqn
\begin{array}{rcc}
\tilde D_u^W: \tilde {\mc W}_u &\ \to &\ {\mc E}_u, \\
                 \xi & \mapsto & \tilde D_u(\xi) + \left( \begin{array}{c} \nabla_{\xi_0} \nabla W(u) \\ 0 \end{array}\right).
\end{array}
\eeqn
Define the Coulomb slice ${\mc W}_u:= \tilde {\mc W}_u \cap {\mc B}_u$ and denote $D_u^W:= \tilde D_u^W |_{{\mc W}_u}$. 

\begin{prop}\label{propa4}
$D_u^W$ is Fredholm with 
\beq\label{eqna3}
{\rm ind} D_u^W = m_d:= {\rm dim}_{\mb R} X + 2\langle c_1^K(TV), d \rangle 
\eeq
\end{prop}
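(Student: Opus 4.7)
The plan is to reduce the Fredholm statement for $D_u^W$ to the Fredholm theory of the standard vortex linearization $D_u$, by splitting the deformation space according to the Morse--Bott decomposition of $TV$ along $\on{Crit} W$ which follows from the geometric phase hypothesis. By Proposition \ref{prop28}, $u$ takes values in $\on{Crit} W \cap V^{\rm ss}$, and on a $K$-equivariant tubular neighborhood of this submanifold the complex normal bundle $N$ carries a fibrewise invertible complex Hessian $H := \nabla^2 W|_N$.

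First I would establish a $K$-invariant splitting $u^* TV = u^* T(\on{Crit} W) \oplus u^* N$ and write $\xi_0 = \xi_0^\parallel + \xi_0^\perp$ correspondingly. The invertibility of $H$ translates the additional condition $\|\nabla_{\xi_0}\nabla W(u)\|_{L^{p,\delta}} < \infty$ defining $\tilde{\mc W}_u$ into the requirement $\xi_0^\perp \in W^{1,p,\delta}(u^* N)$, which identifies $\tilde{\mc W}_u$ with the product of the standard vortex deformation space on $\on{Crit} W$ and a weighted Sobolev space of normal sections. Because $\nabla W$ vanishes identically on $\on{Crit} W$, the contribution $\nabla_{\xi_0^\parallel}\nabla W(u)$ is zero, so in tubular coordinates and after the Coulomb-gauge projection, $D_u^W$ becomes, modulo a compact perturbation, the direct sum of a tangential operator $D_u^\parallel$ (the ordinary vortex linearization at the gauged map $\bar u : {\mb C} \to \on{Crit} W \cap V^{\rm ss}$) and a normal Cauchy--Riemann-type operator $\bar\partial^{A,\perp} + H$ acting on sections of $u^* N$.

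Next I would compute the two indices separately. The tangential block $D_u^\parallel$ is Fredholm by the vortex Fredholm theorem cited before Theorem \ref{thma2}, applied with $V$ replaced by $\on{Crit} W$; its index, computed via $K$-equivariant Riemann--Roch on the removable-singularity extension of $\bar u$ to $S^2$ and descent to the smooth quotient $X$, equals $\on{dim}_{\mb R} X + 2\langle c_1^K(T(\on{Crit} W)), d\rangle$. The normal block $\bar\partial^{A,\perp} + H$ is Fredholm on the weighted space because the fibrewise invertibility of $H$ forces exponential decay of kernel elements at infinity; by reduction to the constant-coefficient asymptotic operator and Riemann--Roch on the compactification of $u^* N$ over $S^2$, its Fredholm index equals $2\langle c_1^K(N), d\rangle$. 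Summing and invoking the $K$-equivariant Whitney identity
\[
c_1^K(TV)|_{\on{Crit} W} = c_1^K(T(\on{Crit} W)) + c_1^K(N)
\]
gives $\on{ind}(D_u^W) = \on{dim}_{\mb R} X + 2\langle c_1^K(TV), d\rangle = m_d$, as required.

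The hard part will be rigorously establishing the block decomposition globally: the tubular-coordinate splitting is only defined in a neighborhood of $\on{Crit} W \cap V^{\rm ss}$, so one must patch it by $K$-invariant cutoffs to obtain a global decomposition with a genuinely compact remainder. In addition, the Coulomb gauge constraint $d\mu(u)(J\xi_0) + \partial_s\xi_1 + [\phi,\xi_1] + \partial_t\xi_2 + [\psi,\xi_2] = 0$ couples the normal deformation $\xi_0^\perp$ to the gauge variables $(\xi_1, \xi_2)$, and one must verify that this coupling enters only at lower order so that the effective normal block truly decouples. Finally, the index of $\bar\partial + H$ on weighted Sobolev spaces must be computed by a limiting-operator / spectral-flow argument that matches it with the Riemann--Roch index on the spherical compactification of $u^* N$.
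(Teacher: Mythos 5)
Your strategy is the same as the paper's in its essential features: use Proposition \ref{prop28} to place the image of $u$ in $\on{Crit} W$, split $TV$ there using the Morse--Bott hypothesis, and compute the index block by block. But the bookkeeping is organized differently. The paper introduces the augmented operator $\hat D_u^W$ (equation \eqref{aug}), passes to the asymptotic value $x_\infty$ at infinity, and uses the three-way splitting $T_{x_\infty}V \cong T_X \oplus N_X \oplus G_X$, where $G_X$ is the space of infinitesimal $G$-orbit directions. It then \emph{trivializes} $N_v$ and $G_v$ over ${\mb C}$ (for $N_v$, via eigenspaces of $\nabla^2 W$), and argues directly that the $N_v$-component of $D_1$ has Fredholm index zero and that the $G_v$-component of $D_1$, together with the gauge rows $D_2$, $D_3$, also has index zero; the entire index is carried by the $T_v$-component, computed by Riemann--Roch. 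You instead absorb $G_X$ into a two-way splitting $T(\on{Crit} W)\oplus N$, compute the tangential index by reducing to Ziltener's Fredholm theorem for the vortex linearization on the target $\on{Crit} W$, compute a normal index, and sum with the Whitney formula. Your route has the virtue of making the appearance of $c_1^K(TV)$ (rather than $c_1^K(T_X)$) in the formula $m_d$ transparent, which the paper leaves somewhat implicit.

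Where I'd push back is on your claim that the normal block $\bar\partial^{A,\perp}+H$ has Fredholm index $2\langle c_1^K(N), d\rangle$. The paper's argument gives \emph{zero} for this block: in the chosen trivialization $H$ is everywhere invertible, so on the weighted space the operator has an invertible asymptotic and no $c_1$ contribution (the bundle has been framed by the asymptotic trivialization, not by $\tilde u^* N$). These two numbers happen to agree because $\langle c_1^K(N), d\rangle = 0$: the nondegenerate \emph{holomorphic} Hessian $\nabla^2 W$ restricts to a complex symmetric pairing on $N$, giving a complex-linear isomorphism $N\cong N^*$, so $c_1^K(N)$ is two-torsion and pairs to zero with $d$. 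Without this observation, ``Riemann--Roch on the compactification of $u^* N$'' would naively give $2\dim_{\mb C} N + 2\langle c_1^K(N), d\rangle$, which is not what the weighted-Sobolev problem computes. So either make the torsion observation and note both sides vanish, or follow the paper and show directly that the normal block has index zero. The other technical issues you flag (patching the tubular splitting with $K$-invariant cutoffs, checking the gauge coupling of $\xi_0^\perp$ through $d\mu(u)(J\xi_0)$ enters only compactly) are also tacit in the paper's ``up to compact operators, we may assume $D_1$ preserves the splitting,'' so the two arguments are at the same level of rigor there.
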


\begin{proof}
It is similar to the computation of Fredholm index in \cite{Ziltener_book} and \cite{Venugopalan_Xu}. We first introduce the so-called augmented linearization, which reads 
\beq\label{aug}
\begin{split}
\hat D_u^W: \tilde {\mc B}_u \to & \tilde {\mc E}_u:= L^{p,\delta}({\mb C}, u^* TV \oplus {\mf k} \oplus {\mf k})\\
(\xi_0, \xi_1, \xi_2) \mapsto & \left( \begin{array}{c} \nabla_s \xi_0 + \nabla_{\xi_0} {\mc X}_\phi + J (\nabla_t \xi_0 + \nabla_{\xi_0} {\mc X}_{\psi}) + {\mc X}_{\xi_1} + J {\mc X}_{\xi_2}  + \nabla_{\xi_0} \nabla W(u)\\
 \partial_s \xi_2 + [\phi, \xi_2] - \partial_t \xi_1 -[\psi, \xi_1] + d\mu(u) (\xi_0)\\
 \partial_s \xi_1 + [\phi, \xi_1] + \partial_t \xi_2 +[\psi, \xi_2] + d\mu(u) (J \xi_0) \end{array} \right)
\end{split}
\eeq
Then $D_u^W$ is Fredholm if and only if $\hat D_u^W$ is, and they have the same Fredholm index. Let $D_1, D_2, D_3$ be the three components of $\hat D_u^W$. Next, by the asymptotic behavior of affine vortex, there exist $R>0$ and a smooth map $g: {\mb C} \setminus B_R \to K$ such that
\beqn
\lim_{z \to \infty} g(z) u(z) = x_\infty \in \mu^{-1}(0) \cap dW^{-1}(0)
\eeqn 
exists. One can view $x_\infty$ as the value at infinity of the pointlike instanton $u$, even though only its $K$-orbit is well-defined. One can split $T_{x_\infty} V$ as 
\beq\label{eqna5}
T_{x_\infty} V \cong T_X \oplus N_X \oplus G_X
\eeq
where $G_X$ is the subspace generated by infinitesimal $G$-actions (whose dimension equals to the dimension of the complex Lie group $G$), $N_X$ is the normal bundle to the critical locus ${\rm Crit}W$, and $T_X$ (whose dimension equals to the dimension of the classical vacuum $X$) is the orthogonal complement to the first two summands. We can then split 
\beqn
v^* P(TV) \cong \bigoplus_{i=1}^m T_v \oplus N_v \oplus G_v
\eeqn
over the domain ${\mb C}$ whose restriction to the fibre at infinity via the identification $v^* P(TV)|_{\infty} \cong T_{x_\infty} V$ agrees with the splitting \eqref{eqna5}. Moreover, we can identify $G_X$ with the Lie algebra ${\mf g}$ and require that this identification extends to a trivialization of $G_v$ over ${\mb C}$. We can also view $N_X$ as generated by the eigenspaces of $\nabla^2 W$ corresponding to nonzero eigenvalues and require that this identification extends to a trivialization of $N_v$. Then up to compact operators, we may assume that the component $D_1$ of $\hat D_u^W$ preserves the above bundle splitting. The by the same argument of \cite{Venugopalan_Xu}, one can see that the $G_v$-component of $D_1$ together with $D_2$ and $D_3$ is Fredholm with index zero, and the $N_v$-component of $D_1$ also has Fredholm index zero. Then the Fredholm index of $\hat D_u^W$ coincides with that of $D_1$ in the $T_v$-direction, which, by a Riemann--Roch formula (see corresponding part in \cite{Venugopalan_Xu}), gives the formula \eqref{eqna3}.
\end{proof}

Now we need to identify small infinitesimal deformations with nearby gauged maps. Now we need to put a further restriction of the Riemannian metric. We choose a $K$-invariant Riemannian metric on $V$ such that both $dW^{-1}(0) \cap \mu^{-1}(0)$ and $\mu^{-1}(0)$ are totally geodesic. 

Now we define the analogue of the chart \eqref{eqna2}. For $\epsilon>0$ sufficiently small, denote $\tilde {\mc W}_u^\epsilon = \tilde {\mc W}_u \cap \tilde {\mc B}_u^\epsilon$, ${\mc W}_u^\epsilon = {\mc W}_u \cap \tilde {\mc B}_u^\epsilon$. For any $\xi \in \tilde {\mc W}_u^\epsilon$, define the gauged map $u_\xi$ as before. Then define the map 
\beqn
\tilde {\mc G}_u: \tilde {\mc W}_u^\epsilon \to {\mc E}_u,\ \xi \mapsto \tilde {\mc F}_u(\xi) + \left( \begin{array}{c} \nabla W(u_\xi) \\ 0 \end{array}\right).
\eeqn
Here $\tilde {\mc F}_u$ is the map corresponding to the $W = 0$ vortex equation. This is a smooth map whose linearization at $\xi = 0$ is just $\tilde D_u^W$. Then there is a natural map 
\beq\label{eqna6}
\varphi_u^\epsilon: {\mc G}_u^{-1}(0) \cap {\mc W}_u^\epsilon \to \tilde {\mc M}(V),\ \xi \mapsto [u_\xi].
\eeq

\begin{thm}
For $\epsilon>0$ sufficiently small, the map \eqref{eqna6} is a homeomorphism onto an open neighborhood of $[u]$ in $\tilde {\mc M}(V)$. In particular, when $D_u^W$ is surjective, $[u]$ has an open neighborhood in $\tilde {\mc M}(V)$ which is homeomorphic to an open subset of ${\mb R}^{m_d}$.
\end{thm}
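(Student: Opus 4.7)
The plan is to adapt the proof of Theorem \ref{thma2} (the affine vortex version of Venugopalan--Xu) to the pointlike instanton setting, using Proposition \ref{prop28} to pass between the two descriptions. The overall strategy is a Banach implicit function theorem argument, where the Coulomb slice ${\mc W}_u$ eliminates the gauge redundancy and the Fredholm property (Proposition \ref{propa4}) provides the needed surjectivity of the linearization when $D_u^W$ is surjective. The two ingredients new to the Witten setting are the stronger norm on ${\mc W}_u$ controlling $\nabla_{\xi_0} \nabla W(u)$, and the special choice of $K$-invariant metric which makes both $\mu^{-1}(0)$ and $dW^{-1}(0)\cap \mu^{-1}(0)$ totally geodesic.

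First I would verify that $\varphi_u^\epsilon$ is well-defined and continuous: for $\xi \in {\mc G}_u^{-1}(0) \cap {\mc W}_u^\epsilon$, the weighted Sobolev norm forces $|\xi_0|$ to decay at infinity, so $u_\xi$ has bounded image and, by expanding the energy $\tfrac{1}{2}\|d_{A_\xi}u_\xi\|_{L^2}^2 + \tfrac{1}{2}\|F_{A_\xi}\|_{L^2}^2 + \tfrac{1}{2}\|\mu(u_\xi)\|_{L^2}^2 + \|\nabla W(u_\xi)\|_{L^2}^2$ around $u$ and using $\tilde{\mc G}_u(\xi)=0$, finite energy. Hence $u_\xi$ is a bounded finite-energy solution of the gauged Witten equation, so Proposition \ref{prop28} tells us $[u_\xi] \in \tilde{\mc M}(V)$. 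Next, injectivity: if $u_{\xi_1}$ and $u_{\xi_2}$ are gauge-equivalent via $g$, smallness of both $\xi_i$ forces $g$ to be close to the identity. The Coulomb condition defining ${\mc W}_u$ selects a unique representative in each gauge orbit sufficiently close to $u$, so $\xi_1=\xi_2$; this is the standard Coulomb gauge fixing argument using the Fredholm alternative applied to $D_u^W$.

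The core step is local surjectivity. Given a pointlike instanton $u'$ with $[u']$ close to $[u]$ in $\tilde{\mc M}(V)$, by definition there exist gauge transformations $g_i$ with $(u')_{g_i}\to u$ in $C^\infty_{\rm loc}$. I would use the asymptotic decay estimates for affine vortices (Proposition \ref{prop28} plus \cite[Proposition 38]{Ziltener_book}) to promote this local convergence to convergence in the weighted norm $\|\cdot\|_{1,p,\delta}^W$: the limit $x_\infty \in \mu^{-1}(0)\cap dW^{-1}(0)$ lies in the totally geodesic intersection, so a geodesic representation $u' = \exp_u \eta$ makes sense on a neighborhood of infinity with $\eta$ decaying exponentially together with $\nabla_\eta \nabla W(u)$. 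Projecting the resulting small deformation onto the Coulomb slice ${\mc W}_u^\epsilon$ (via a second implicit function theorem step, killing the infinitesimal gauge action) produces the required $\xi$, and the resulting map is a continuous inverse of $\varphi_u^\epsilon$. When $D_u^W$ is surjective, ${\mc G}_u^{-1}(0)\cap {\mc W}_u^\epsilon$ is cut out transversally, so by the standard implicit function theorem it is a smooth manifold of dimension $\operatorname{ind} D_u^W = m_d$, giving the last sentence of the theorem.

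The main obstacle will be the stronger norm control of $\nabla W(u_\xi)$. Because $\operatorname{Hess} W$ is only positive-semidefinite along $dW^{-1}(0)$ and invertible transversally, a crude Taylor expansion does not propagate the bound $\|\nabla_{\xi_0}\nabla W(u)\|_{L^{p,\delta}}$ to $\|\nabla W(u_\xi)\|_{L^{p,\delta}}$ uniformly in the size of $\xi_0$. The special choice of Riemannian metric is essential here: since $dW^{-1}(0)\cap \mu^{-1}(0)$ is totally geodesic, one can split $\xi_0$ into tangential and normal components with respect to the critical locus at infinity, and control each separately — the tangential component behaves as in the $W=0$ affine vortex case, while the normal component is uniformly controlled by the stronger norm via the invertibility of $\operatorname{Hess} W$ in the normal directions. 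Verifying uniform smoothness (and uniform invertibility) of ${\mc G}_u$ on ${\mc W}_u^\epsilon$ in this weighted setting, and checking that the image projection to $\tilde{\mc M}(V)$ is open, will be the technical heart of the argument.
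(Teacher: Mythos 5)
Your proposal takes a genuinely different route from the paper, and while the general spirit is right, there are two places where the paper does something slicker, and one place where your argument is vague in a way that matters.

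\textbf{Reduction rather than re-derivation.} The paper does not re-run the Coulomb gauge fixing and implicit function theorem machinery for the operator $D_u^W$. Instead it observes that, since every pointlike instanton is an affine vortex (Proposition \ref{prop28}), the set ${\mc G}_u^{-1}(0)\cap {\mc W}_u^\epsilon$ sits inside ${\mc F}_u^{-1}(0)\cap {\mc B}_u^\epsilon$, and the map $\varphi_u^\epsilon$ factors through the chart already produced by Theorem \ref{thma2}. Injectivity then follows immediately from \cite[Proposition 3.3]{Venugopalan_Xu} applied to the $W=0$ problem, with no new Coulomb gauge argument. For surjectivity, the paper likewise uses Theorem \ref{thma2} to extract, for any $[u_i]\to[u]$, elements $\xi_i\in{\mc F}_u^{-1}(0)$ with $[u_{\xi_i}]=[u_i]$; the only thing that then remains is to verify that $\xi_i$ lies in the smaller space $\tilde{\mc W}_u$, i.e.\ $\|\nabla_{\xi_i}\nabla W(u)\|_{L^{p,\delta}}<\infty$ and $\to 0$. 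Your plan of a two-step implicit function argument ("promote local convergence to weighted convergence, then project onto the Coulomb slice") is not wrong in principle, but it repeats a large amount of work already encapsulated in the affine-vortex local model, and will have to separately justify why the projection to ${\mc W}_u$ is well defined and continuous.

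\textbf{The crux: controlling the extra norm term.} You correctly identify the issue (the stronger norm involving $\nabla_{\xi_0}\nabla W(u)$ is not obviously propagated) and correctly point to the totally geodesic metric as the relevant tool. But your proposed mechanism --- "split $\xi_0$ into tangential and normal components with respect to the critical locus \ldots the normal component is uniformly controlled by the stronger norm via the invertibility of $\operatorname{Hess} W$" --- is circular in the surjectivity step: there one does not yet know that $\xi_i$ has finite $\|\cdot\|_{1,p,\delta}^W$-norm, so appealing to that norm to bound the normal component is not available. What the paper uses instead is a \emph{structural} constraint: both $u$ and $u_{\xi_i}$ are pointlike instantons, so both have image in $dW^{-1}(0)$ (Proposition \ref{prop28} again). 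Because $dW^{-1}(0)\cap\mu^{-1}(0)$ is totally geodesic, one gets a pointwise estimate of the form
\[
\big|\nabla_\beta\nabla W(x)\big| \le C\big(|\beta|\,|\mu(x)| + |d\mu(x)(J\beta)|\big)
\]
valid for $x\in dW^{-1}(0)$ near $\mu^{-1}(0)$ and $\beta$ with $\exp_x\beta\in dW^{-1}(0)$. This, combined with the known quantities $\|\xi_i\|_{L^\infty}\to 0$, $\|d\mu(u)(J\xi_i)\|_{L^{p,\delta}}\to 0$, and the decay $|\mu(u(z))|\lesssim |z|^{-2+\epsilon}$, closes the estimate. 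There is no "invertibility of $\operatorname{Hess} W$ in normal directions" anywhere --- the key is that no normal displacement occurs at all, because both endpoints of the geodesic lie in the critical locus. This is the ingredient your proposal is missing, and without it the surjectivity step has a genuine gap.

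\textbf{Topology.} You flag "checking that the image projection to $\tilde{\mc M}(V)$ is open" as the technical heart; the paper dispatches this in one line by noting ${\mc G}_u^{-1}(0)$ is locally compact (a closed subset of a finite-dimensional manifold after adding obstruction spaces via Lemma \ref{lemmaa8}), $\tilde{\mc M}(V)$ is Hausdorff, and a continuous bijection from a compact space to a Hausdorff space is a homeomorphism. You should invoke this standard fact rather than treating it as an open problem.
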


\begin{proof}
We first prove $\varphi_u^\epsilon$ is bijective onto a neighborhood of $[u]$. 

\noindent {\it Injectiveness.} Suppose $\varphi_u^\epsilon$ is not injective for all $\epsilon$. Then there exist $u_\xi, u_{\xi'} \in {\mc G}_u^{-1}(0)$ which are gauge equivalent. Then $\xi$ and $\xi'$ are also elements of ${\mc F}_u^{-1}(0)$. By \cite[Proposition 3.3]{Venugopalan_Xu}, when $\epsilon$ is sufficiently small, $\xi = \xi'$. Hence $\varphi_u^\epsilon$ is injective.

\noindent {\it Surjectivenss.} Suppose $\varphi_u^\epsilon$ is not surjective onto a neighborhood of $[u]$. Then there exists a sequence of points $[u_i] \in \tilde {\mc M}(V)$ avoiding the image of $\varphi_u^\epsilon$ converging to $[u]$. Now notice that $[u_i]$ are also elements of $\tilde {\mc N} (V)$ converging to $[u]$. Then for $i$ sufficiently large there exists $\xi_i \in {\mc F}_u^{-1}(0)$ such that $[u_{\xi_i}] = [u_i]$. What we need to show is that $\xi_i\in {\mc G}_u^{-1}(0)$. Notice that $u_{\xi_i}$ solves the gauged Witten equation. Hence it remains to show that $\xi_i \in \tilde {\mc B}_u^W$ and converges to zero, i.e., 
\beq\label{eqna7}
\lim_{i \to \infty} \| \nabla_{\xi_i} \nabla W(u) \|_{L^{p,\delta}} = 0.
\eeq
What we know is
\beq\label{eqna8}
\lim_{i \to \infty} \| \xi_i \|_{1,p,\delta} = 0\Longrightarrow \lim_{i \to \infty} \left( \| d\mu(u)(J\xi_i)\|_{L^{p,\delta}} + \| \xi_i \|_{L^\infty}\right) = 0.
\eeq
Indeed, because $dW^{-1}(0) \cap \mu^{-1}(0)$ is totally geodesic, there exists $C>0$ and a neighborhood $U\subset V$ of $\mu^{-1}(0)$ such that for all $x \in dW^{-1}(0)\cap U$ near $\mu^{-1}(0)$ and $\beta \in T_x V$ with $\exp_x \beta \in dW^{-1}(0)$, there holds
\beqn
| \nabla_\beta \nabla W(x) | \leq C \left( |\beta| |\mu(x)| + |d\mu(x) (J\beta)| \right).
\eeqn
Therefore, for $R>0$ sufficiently large such that $u({\mb C} \setminus B_R) \subset U$, one has
\beqn
\begin{split}
\| \nabla_{\xi_i} \nabla W(u) \|_{L^{p,\delta}} \leq &\ \| \nabla_{\xi_i} \nabla W(u) \|_{L^{p,\delta}(B_R)} + \| \nabla_{\xi_i} \nabla W(u) \|_{L^{p,\delta}({\mb C} \setminus B_R)}\\
\leq &\ C \| \xi_i \|_{L^\infty} + C \| d\mu(u) (J \xi_i) \|_{L^{p,\delta}} + C \| \xi_i \|_{L^\infty} \| \mu(u) \|_{L^{p,\delta}({\mb C} \setminus B_R)}
\end{split} 
\eeqn
Notice that $\mu(u(z))$ decays like $|z|^{-2 + \epsilon}$ (see \cite{Ziltener_Decay}) which has finite $L^{p,\delta}$-norm. Then \eqref{eqna7} follows from the above estimate and \eqref{eqna8}.

Last, notice that ${\mc G}_u^{-1}(0)$ is locally compact as it is a closed subset of a manifold (by adding obstructions, see Lemma \ref{lemmaa8} below) and $\tilde {\mc M}(V)$ is Hausdorff. So the restriction of $\varphi_u^\epsilon$ to a precompact open neighborhood of $0$ in ${\mc G}_u^{-1}(0)$ is a homeomorphism onto its image.
\end{proof}

\subsection{Existence of obstruction spaces}

We first recall the basic results about the existence of ``$K$-immersive points.''
\begin{defn}
Let $\Omega\subset {\mb C}$ be a Riemann surface and $\sigma: \Omega \to (0, +\infty)$ be a smooth function. Let $(u, \phi, \psi)$ be a solution to the vortex equation \eqref{eqn22}. A point $z \in \Omega$ is called a {\bf $K$-immersive point} if $u(z)\in V^{\rm ss}$ and 
\beqn
\partial_s u (z) + {\mc X}_{\phi(z)} (u(z)) \neq 0.
\eeqn
\end{defn}

\begin{lemma}(cf. \cite[Lemma 2.5]{Cieliebak_Gaio_Mundet_Salamon_2002})\label{lemmaa7} If there exists an open subset $U \subset \Omega$ such that $u(U) \subset V^{\rm ss}$ and $\partial_s u + {\mc X}_\phi(u) \equiv 0$ over $U$, then $E(u, \phi, \psi)  = 0$. In particular, if $u$ is a nontrivial pointlike instanton, then the subset of $K$-immersive points is open and dense.
\end{lemma}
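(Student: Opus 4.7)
The plan is to bootstrap from the vanishing of $d_A u$ on $U$ to $d_A u \equiv 0$ on all of ${\mb C}$, and then to deduce that every term of the energy functional vanishes.

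First I would observe that the first vortex equation $u_s + J u_t = 0$ and invertibility of $J$ force $u_t := \partial_t u + {\mc X}_\psi(u) \equiv 0$ on $U$, so $d_A u \equiv 0$ on $U$. The curvature commutator identity
\[
\left( \nabla_s^A \nabla_t^A - \nabla_t^A \nabla_s^A \right) u = {\mc X}_{F_A^{st}}(u),
\]
together with the second vortex equation $F_A^{st} = -\mu(u)$, yields ${\mc X}_{\mu(u)}(u) \equiv 0$ on $U$; since $u(U) \subset V^{\rm ss}$ and $K$ acts locally freely on $V^{\rm ss}$, this forces $\mu(u) \equiv 0$ on $U$, and therefore $F_A \equiv 0$ on $U$.

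Next I would globalize by unique continuation. Applying $\nabla_s^A$ to $u_s + J u_t = 0$, using $\nabla J = 0$ in the K\"ahler setting, and then substituting the commutator identity, one derives the Cauchy--Riemann-type equation
\[
\nabla_s^A u_s + J \nabla_t^A u_s = J\, {\mc X}_{\mu(u)}(u),
\]
which in a local trivialization is a linear first-order elliptic system for the section $u_s$ of $u^* TV$, with coefficients depending smoothly on $(u, \phi, \psi)$. Aronszajn's unique continuation theorem then propagates $u_s \equiv 0$ from $U$ to all of ${\mb C}$; equivalently, after a local gauge transformation bringing our solution to the constant $(u_0, 0, 0)$ with $u_0 \in \mu^{-1}(0)$ on $U$, comparison with the trivial solution via unique continuation for the vortex equation in Coulomb gauge forces agreement everywhere. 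Hence $d_A u \equiv 0$ on ${\mb C}$, whence $\mu(u) \equiv 0$ by the commutator identity and $F_A \equiv 0$ by the second vortex equation. Combined with $\nabla W(u) \equiv 0$ from Proposition~\ref{prop28}, every term in the energy vanishes and $E(u, \phi, \psi) = 0$.

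For the density assertion, let $u$ be a nontrivial pointlike instanton and suppose its $K$-immersive set is not dense, so the complement
\[
\{ z : u(z) \notin V^{\rm ss} \} \cup \{ z : u_s(z) = 0 \}
\]
has non-empty interior $W$. If $W \cap \{u \in V^{\rm ss}\} \neq \emptyset$, then this intersection is a non-empty open set on which $u \in V^{\rm ss}$ and $u_s \equiv 0$, and the first part forces $E(u) = 0$, contradicting nontriviality. Otherwise $u(W) \subset V \setminus V^{\rm ss}$, a closed complex-analytic subvariety of $V$ (the GIT unstable locus for the reductive group $G$); but in a local gauge with $\phi = \psi = 0$, $u$ satisfies $\partial_s u + J \partial_t u = 0$, so the identity theorem for $J$-holomorphic maps into analytic subvarieties forces $u({\mb C}) \subset V \setminus V^{\rm ss}$, contradicting the asymptotic behavior $\mu(u(z)) \to 0$ as $|z| \to \infty$, which places $u(z)$ in $V^{\rm ss}$ for $|z|$ large. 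The main obstacle is the unique-continuation step: since the right-hand side of the CR equation for $u_s$ depends on the as-yet-unknown $u$, one must either bundle $(u, u_s, u_t)$ into a combined elliptic system or carefully compare the full vortex equation in Coulomb gauge against the trivial solution, with either route ultimately resting on Aronszajn's theorem.
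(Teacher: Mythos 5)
Your argument is correct in outline, but it takes a much longer road than the paper for the first part. The paper's proof of the energy-vanishing claim is a single sentence: it observes that $K$ has finite stabilizers on $V^{\rm ss}$ and then cites \cite[Lemma~2.5]{Cieliebak_Gaio_Mundet_Salamon_2002}. Your proposal essentially re-derives that lemma from scratch: you show $d_A u \equiv 0$, $\mu(u) \equiv 0$, $F_A \equiv 0$ on $U$, and then globalize by unique continuation. You correctly identify the crux — the CR-type equation for $u_s$ has a right-hand side $J\,{\mc X}_{\mu(u)}(u)$ that is nonlinear in the unknown $u$, so Aronszajn's theorem does not apply directly — and the second route you sketch (compare against the trivial solution $(u_0,0,0)$ in Coulomb gauge) is exactly how the cited lemma is proved. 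So this is not a gap so much as an unrolled citation; what it buys you is self-containedness at the cost of replicating standard work. For the density part your contradiction argument matches the paper's terse statement and fills in useful detail, but one step is imprecise: you cannot in general "choose a local gauge with $\phi = \psi = 0$," since a real gauge transformation trivializes the connection only when $F_A = 0$. What you want is simply that $\ov\partial_{J,A} u = 0$ is itself a Cauchy--Riemann equation with smooth lower-order perturbation (or, equivalently, pass to a local holomorphic gauge for the complexified bundle), so the identity theorem applies to $u$ with values in the complex-analytic subvariety $V \setminus V^{\rm ss}$. Also note that the asymptotics $\mu(u(z)) \to 0$ you invoke are available precisely because Proposition~\ref{prop28} guarantees $u$ is an affine vortex — which is the point the paper makes when asserting the $K$-immersive set is nonempty.
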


\begin{proof}
Notice that points in $V^{\rm ss}$ have trivial or finite $K$-stabilizer. Then the first part of this lemma follows from \cite[Lemma 2.5]{Cieliebak_Gaio_Mundet_Salamon_2002}. If $u$ is a pointlike instanton, then by the fact that pointlike instantons are also affine vortices, the set of $K$-immersive points is nonempty and open. It is dense because $u$ is a holomorphic object and the unstable locus of $V$ is a complex subvariety. 
\end{proof}

\begin{lemma}\label{lemmaa8}
Let $u = (u, \phi, \psi)$ be a nontrivial pointlike instanton and let $U \subset {\mb C}$ be a nonempty open subset consisting of only $K$-immersive points. Then there exists a finite-dimensional vector space $E_u$ and a linear map 
\beqn
\iota: E_u \to {\mc E}_u
\eeqn
satisfying the following conditions. 

\begin{enumerate}

\item $E_u$ is transverse to the image of the operator $D_u$.

\item $\iota(E_u)$ is generated by sections whose supports are contained in $U$.

\end{enumerate}
\end{lemma}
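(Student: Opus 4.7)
The plan is to combine the Fredholm property of $D_u$ with strong unique continuation for its formal $L^2$-adjoint. By Proposition \ref{propa4} (in its $W = 0$ specialization, which applies since $u$ is also an affine vortex by Proposition \ref{prop28}) the operator $D_u: {\mc B}_u \to {\mc E}_u$ is Fredholm, so ${\rm coker}(D_u)$ is finite-dimensional. Via the weighted $L^2$-pairing between $L^{p,\delta}$ and $L^{q,-\delta}$ (with $\frac{1}{p}+\frac{1}{q}=1$) one identifies ${\rm coker}(D_u) \cong \ker D_u^*$, where $D_u^*$ is the formal adjoint. Because we work in the Coulomb slice, the augmented operator $\hat D_u$ from \eqref{aug} is a first-order elliptic operator of generalized Cauchy--Riemann type, and so is its adjoint $\hat D_u^*$.

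Next I would invoke Aronszajn's strong unique continuation theorem for $\hat D_u^*$: any $\eta \in \ker D_u^*$ that vanishes on the nonempty open subset $U$ must vanish identically. Picking a basis $\eta_1,\ldots,\eta_k$ of $\ker D_u^*$, it follows that the restrictions $\eta_1|_U,\ldots,\eta_k|_U$ remain linearly independent in $L^q_{\rm loc}(U)$. A standard Hahn--Banach argument using smooth bump functions compactly supported in $U$ then produces sections
\beqn
s_1,\ldots,s_k \in C_c^\infty(U,\, u^*TV \oplus {\mf k}) \subset {\mc E}_u
\eeqn
such that the pairing matrix $(\langle s_i, \eta_j\rangle_{L^2})_{i,j}$ is invertible. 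Setting $E_u := {\rm span}(s_1,\ldots,s_k)$ with $\iota$ the tautological inclusion into ${\mc E}_u$, the composition $E_u \hookrightarrow {\mc E}_u \twoheadrightarrow {\rm coker}(D_u)$ is then a linear isomorphism, which is precisely the transversality condition (a); condition (b) holds by construction.

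The main technical obstacle is verifying the strong unique continuation property for $\hat D_u^*$. Over the $K$-immersive open set $U$---which by hypothesis lies in the semistable locus where the $K$-action is free---one can trivialize the bundles $u^*TV$ and ${\mf k}$; the principal symbol of $\hat D_u^*$ then becomes that of a standard $\ov\partial$-operator on a trivial bundle, and the classical Aronszajn--Carleman theorem yields strong unique continuation. A secondary point worth flagging is that Definition \ref{defn54} prefers $\iota_\alpha$ valued in purely $TV$-sections rather than $TV \oplus {\mf k}$-sections; this refinement can be obtained from the stronger assertion that the $u^*TV$-component of any nonzero $\eta \in \ker D_u^*$ cannot vanish on $U$, which in turn follows by eliminating the ${\mf k}$-components from the first-order system $\hat D_u^*\eta = 0$ on the $K$-immersive locus to extract a self-contained elliptic equation for $\eta_0$, to which Aronszajn's theorem applies directly.
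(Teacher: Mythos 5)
Your overall strategy matches the paper's: Fredholmness gives a finite-dimensional cokernel, identified with the kernel of the formal adjoint via the weighted $L^2$-pairing; elements of that kernel are smooth by elliptic regularity and satisfy unique continuation; one then chooses cutoff sections supported in $U$ to span a complement to the image. Up to this point the two arguments are essentially the same, and your Hahn--Banach bookkeeping for producing $s_1,\ldots,s_k$ is a legitimate way to organize what the paper does by induction.

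The real divergence is at the point you call ``secondary,'' and it is actually the heart of the matter. Your primary construction yields $s_i \in C_c^\infty(U,\, u^*TV \oplus {\mf k})$, i.e.\ sections with possibly nonzero ${\mf k}$-components. As you correctly observe, Definition \ref{defn54} requires obstruction sections landing only in the $\Lambda^{0,1}\otimes TV$ summand, so a further refinement is needed. Notice also that your main argument never uses the $K$-immersive hypothesis on $U$ --- ellipticity of $\hat D_u$ and Aronszajn alone would grant unique continuation over \emph{any} nonempty open subset --- which should signal that you are proving something weaker than the lemma is used for. Your proposed repair, to ``eliminate the ${\mf k}$-components from the first-order system'' and extract a self-contained elliptic equation for $\eta_0$ alone, is not clearly achievable: the system $\hat D_u^*\eta = 0$ genuinely couples $\eta_0,\eta_1,\eta_2$, and there is no obvious algebraic elimination that isolates $\eta_0$ with a decoupled Aronszajn-type operator.

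The paper's route to the refined conclusion is simpler and worth noting. Rather than attempting to decouple, one argues by contradiction: if $\eta_0 \equiv 0$ on $U$, then pairing $\eta$ against test elements of the form $\xi = (\xi_0,0,0)$ with $\xi_0$ compactly supported in $U$ kills the $D_1$ contribution and yields
\beqn
\langle d\mu(u)(\xi_0), \eta_1 \rangle + \langle d\mu(u)(J\xi_0), \eta_2 \rangle = 0
\eeqn
for all such $\xi_0$. This is exactly where the $K$-immersive hypothesis enters: on $U$ the map $\xi_0 \mapsto \big(d\mu(u)(\xi_0), d\mu(u)(J\xi_0)\big)$ is pointwise surjective, forcing $\eta_1|_U \equiv \eta_2|_U \equiv 0$, hence $\eta|_U \equiv 0$; unique continuation on the \emph{full coupled system} then gives $\eta \equiv 0$, a contradiction. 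So $\eta_0 \not\equiv 0$ on $U$, and one takes obstruction sections of the form $(\beta \eta_0, 0)$ with $\beta$ a bump function supported where $\eta_0 \neq 0$. You should replace your ``elimination'' sketch with this argument, which is both more elementary and actually exploits the hypothesis in the statement.
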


\begin{proof}
Let $\hat D_u^W$ be the augmented linearized operator at $u$ (see \eqref{aug}). Then $\hat D_u$ is  a Fredholm operator. Suppose $D_u^W$ is not surjective. Then $\hat D_u^W$ is not surjective as well and hence has a finite-dimensional cokernel. Via the $L^2$-pairing, the dual space of $L^{p,\delta}$ is $L^{p', -\delta}$ where $\frac{1}{p} + \frac{1}{p'} = 1$. Then each element of the cokernel of $\hat D_u^W$ can be identified with an element $\eta \in L^{p', -\delta}({\mb C}, u^* TV \oplus {\mf k} \oplus {\mf k})$ which is orthogonal to the image of $\hat D_u^W$. Then $\eta$ is annihilated by the formal adjoint of the linearization $\hat D_u^W$, which is a first order linear operator with smooth coefficients. Then by elliptic regularity, $\eta$ is smooth.

We write $\eta = (\eta_0, \eta_1, \eta_2)$ where $\eta_0$ is a vector field along $u$. We claim that $\eta_0 \neq 0$. Indeed, if $\eta_0 = 0$, then using test functions of the form $\xi = (\xi_0, 0, 0)$, one has 
\beqn
\langle \hat D_u^W (\xi), \eta \rangle = \langle d\mu(u) (\xi_0), \eta_1 \rangle + \langle d\mu(u)(J \xi_0), \eta_2 \rangle = 0.
\eeqn
As for a nonempty open subset $U$ of $K$-immersive points we have $u(U) \subset V^{\rm ss}$, one knows the map $\xi_0 \mapsto (d\mu(u)(\xi_0), d\mu(u)(J\xi_0))$ is surjective, it follows that $\eta_1|_U \equiv \eta_2|_U \equiv 0$. Then $\eta$ vanishes on $U$. By the unique continuation principle, $\eta \equiv 0$, which is a contradiction. 

Choose a suitable cut-off function $\beta$ supported in the given open subset $U$. Then $(\beta \eta_0, 0, 0)$ is contained in $\tilde {\mc E}_u$ but not orthogonal to $\eta$. Hence $(\beta \eta_0, 0, 0)$ is not in the image of $\hat D_u^W$. Equivalently, $(\beta \eta_0, 0)$ is not in the image of $D_u^W$, which is the restriction of $\hat D_u^W$ to the Coulomb slice. As the cokernel is finite-dimensional, inductively one can find a list of generators for a finite-dimensional obstruction space $E_u$ spanned by elements of the form $(\beta \eta_0, 0)$ satisfying the requirement.
\end{proof}

\bibliography{../mathref}

\bibliographystyle{amsalpha}

\end{document}